\documentclass[11pt, noadjust, reqno]{amsart}
\usepackage[dvipsnames]{xcolor}
\usepackage[utf8]{inputenc}
\usepackage{amssymb,amsmath,amsthm,amsfonts,fullpage,float,latexsym,bbm,microtype,cite,fancyvrb}
\usepackage{tikz, tikz-cd}
\usepackage{enumerate}
\usetikzlibrary{decorations.pathreplacing}
\usepackage{hyperref}
\hypersetup{colorlinks=true, linkcolor=blue, citecolor=magenta, filecolor=magenta, urlcolor=magenta}
\usepackage{bm}
\allowdisplaybreaks[1]

\newtheorem*{theorem*}{Theorem}

\newtheorem{theorem}{Theorem}[section]
\newtheorem{lemma}[theorem]{Lemma}
\newtheorem{proposition}[theorem]{Proposition}
\newtheorem{corollary}[theorem]{Corollary}
\newtheorem*{corollary*}{Corollary}
\newtheorem{conjecture}[theorem]{Conjecture}
\newtheorem*{conjecture*}{Conjecture}

\newtheorem*{zconjecture*}{Zabrocki's Conjecture}

\newtheorem{introthm}{Theorem}

\newtheorem{introcor}[introthm]{Corollary}

\theoremstyle{definition}
\newtheorem{definition}[theorem]{Definition}
\newtheorem{example}[theorem]{Example}

\theoremstyle{remark}
\newtheorem{remark}[theorem]{Remark}

\numberwithin{equation}{section}

\makeatletter
\newtheorem*{rep@theorem}{\rep@title}\newcommand{\newreptheorem}[2]{%
\newenvironment{rep#1}[1]{%
\def\rep@title{\bf #2 \ref{##1}}%
\begin{rep@theorem}}%
{\end{rep@theorem}}}
\makeatother
\newreptheorem{theorem}{Theorem}
\newreptheorem{corollary}{Corollary}

\renewcommand{\emptyset}{\varnothing}

\newcommand{\Q}{\mathbb{Q}}

\newcommand{\Z}{\mathbb{Z}}
\newcommand{\C}{\mathbb{C}}

\renewcommand{\S}{\mathfrak{S}}
\newcommand{\cC}{\mathcal{C}}
\newcommand{\cY}{\mathcal{Y}}

\DeclareMathOperator{\Hilb}{Hilb}
\DeclareMathOperator{\Frob}{Frob}
\DeclareMathOperator{\Char}{Char}
\DeclareMathOperator{\Sym}{Sym}

\DeclareMathOperator{\sgn}{sgn}
\DeclareMathOperator{\area}{area}

\DeclareMathOperator{\bounce}{bounce}
\DeclareMathOperator{\dinv}{dinv}
\DeclareMathOperator{\diag}{diag}
\DeclareMathOperator{\coarm}{\mathsf{coarm}}
\DeclareMathOperator{\coleg}{\mathsf{coleg}}
\DeclareMathOperator{\Hom}{Hom}
\DeclareMathOperator{\Ind}{Ind}
\DeclareMathOperator{\Res}{Res}

\DeclareMathOperator{\Span}{span}

\newcommand{\newword}[1]{\emph{\textbf{#1}}}

\newcommand\schroderpath[7]{
  \begin{tikzpicture}[scale=.5]
    \foreach \x/\y in {#4}{
      \fill[red!40] (\x,\y) rectangle +(1,1);
    }
    \foreach \x/\y in {#5}{
      \fill[red!40] (\x,\y) -- ++(1,0) -- ++(0,1) -- cycle;
    }
    \foreach \x/\y in {#6}{
      \fill[red!40] (\x,\y) -- ++(0,1) -- ++(1,0) -- cycle;
    }
    \draw[help lines] (0,0) grid (#2,#2);
    \draw[dashed] (0,0) -- (#2,#2);
    \coordinate (current) at (0,0);
    \draw[line width=1.5pt] (current) foreach \s in {#1}{
      -- ++({\s==1 ? 0 : 1},{\s==0 ? 0 : 1}) coordinate (current)
    };
    \foreach \x/\y/\txt in {#7}{
      \node at (\x,\y) {\txt};
    }
  \end{tikzpicture}
}

\begin{document}

\title{Sign components of diagonal superspace coinvariants}

\author[N. González]{Nicolle González}
\address{Department of Mathematics\\
         University of British Columbia, Vancouver, BC, Canada}
\email{nicolle@math.ubc.ca}

\author[J. Lentfer]{John Lentfer}
\address{Department of Mathematics\\
         University of California, San Diego, CA, USA}
\email{jlentfer@ucsd.edu}

\author[H. Mularczyk]{Hanna Mularczyk}
\address{Department of Mathematics\\
        Massachusetts Institute of Technology, Cambridge, MA, USA}
\email{hannamul@mit.edu}

\begin{abstract}
We prove the sign-isotypic components of the coinvariant rings $R_n^{(2,1)}$ and $R_n^{(2,0)} \otimes R_n^{(0,1)}$ are isomorphic and show that the triply-graded multiplicity of this sign character is the Schr\"oder polynomial $S_n(q,t,a)$, divided by $1+a$.
This settles the sign-character component of a conjecture of Zabrocki (2019) on a module for the Delta theorem and proves a conjecture of F. Bergeron (2020) on the multiplicity of the sign character of $R_n^{(2,1)}$. 
Finally, using a result of Hogancamp (2017), we enhance a recent result of Gorsky--Mellit (2026) which relates the Khovanov--Rozansky homology of the $(n,n+1)$-torus knot to $R_n^{(2,0)} \otimes R_n^{(0,1)}$, by showing that the associated Poincar\'e series for this knot can be computed from the sign component of $R_n^{(2,1)}$. 

\end{abstract}

\maketitle

\section{Introduction}

Symmetric group coinvariant rings are central figures in algebraic combinatorics, with deep connections to algebraic geometry, representation theory, and symmetric functions. 
The most basic example, the classical coinvariant ring $R_n^{(1,0)}$, of dimension $n!$  \cite{Artin}, is isomorphic to the regular representation of the symmetric group $\mathfrak{S}_n$ \cite{Chevalley} and to the cohomology ring of the flag variety \cite{Borel, Leray}. 
In the 1990's, Garsia and Haiman introduced a bivariate generalization referred to as the \newword{diagonal coinvariant ring} $R_n^{(2,0)}$ \cite{Haiman1994}. 
This ring attracted immense interest from the combinatorics community, in particular concerning its dimension and Frobenius character. 
By realizing $R_n^{(2,0)}$ geometrically via the Hilbert scheme on $\C^2$, Haiman was able to prove the \emph{$(n+1)^{n-1}$ conjecture}.
This established that its Frobenius series is given by the action of the nabla operator $\nabla$ on the $n$th elementary symmetric function $e_n$, and consequently, that its dimension is $(n+1)^{n-1}$ \cite{Haiman2002}.
These results had immediate and resounding impact. 
Haiman's work on Hilbert schemes has since been very influential in algebraic geometry.
The character formula $\nabla e_n$ spurred a search for a purely combinatorial formulation for this symmetric function, culminating in the celebrated \emph{Shuffle conjecture} of Haglund, Haiman, Loehr, Remmel, and Ulyanov \cite{HHLRU2005}, which was proven in remarkable fashion a decade later by Carlsson and Mellit \cite{CarlssonMellit2018}.

More generally, one can define the \newword{$(k,\ell)$-bosonic-fermionic coinvariant ring} $R_n^{(k,\ell)}$ as the quotient of the polynomial ring in $k$ commuting (bosonic) and $\ell$ anticommuting (fermionic) families of variables,
$\C[\bm{x}^{(1)}, \ldots, \bm{x}^{(k)}, \bm{\theta}^{(1)}, \ldots, \bm{\theta}^{(\ell)}]$, by the ideal generated by the positive-degree invariants of the diagonal $\mathfrak{S}_n$-action (see Definition~\ref{def:bosonic-fermionic}). 
At parameters $(1,0)$ and $(2,0)$, this recovers the aforementioned classical and diagonal coinvariant rings. 
At $(1,1)$, we obtain the \newword{superspace coinvariant ring} $R_n^{(1,1)}$, which was originally studied by N. Bergeron, Colmenarejo, Li, Machacek, Sulzgruber, and Zabrocki at the Fields Institute in 2018. 
Subsequently, the multiplicity of its sign character \cite{SwansonWallach1}, its dimension and Hilbert series \cite{RhoadesWilson2023}, a monomial basis \cite{SaganSwanson2024, Angarone2024}, and its Frobenius series \cite{MuraiRhoadesWilson} have all been determined. 

At $(2,1)$, we obtain the \newword{diagonal superspace coinvariant ring} $R_n^{(2,1)}$, introduced by Zabrocki in \cite{Zabrocki2019}.
Far less is known about $R_n^{(2,1)}$ than $R_n^{(2,0)}$: beyond the structural results of \cite{Bergeron2020,Lentfer-Supersymmetry} and a conjectural basis \cite{HaglundSergel}, its Frobenius series and dimension are only conjectural \cite{Zabrocki2019}.
Conjecturally, its Frobenius series is closely related to the acclaimed \emph{Delta theorem} in the same way $R_n^{(2,0)}$ is to the Shuffle theorem. 
Namely, in \cite{Zabrocki2019} Zabrocki conjectured that $R_n^{(2,1)}$ provides a module-theoretic lift of the symmetric function appearing in the Delta theorem \cite{HaglundRemmelWilson2018, DAdderioMellit}:
\begin{zconjecture*}[\cite{Zabrocki2019}]
For $n \geq 1$,
    \begin{equation}\label{eq:delta thm}
        \Frob(R_n^{(2,1)}; q,t; a) = \sum_{d=0}^{n-1} \Delta'_{e_{n-1-d}}(e_n) a^d.
    \end{equation}
\end{zconjecture*}
\noindent In particular, a combinatorial formula for the right hand side of Equation~\eqref{eq:delta thm} is precisely the content of the Delta theorem. 

\subsection{Main construction and results} In this paper we study the sign-isotypic component of the diagonal superspace coinvariant ring $R_n^{(2,1)}$. 
Sign-isotypic components of coinvariant rings have rich combinatorics: $(R_n^{(1,0)})_{\sgn}$ is spanned by the Vandermonde determinant and $(R_n^{(2,0)})_{\sgn}$ has bigraded Hilbert series given by the \emph{$(q,t)$-Catalan polynomial}, so its dimension is the \emph{Catalan number} $C_n$. 
The Catalan number $C_n$ enumerates $n\times n$ Dyck paths, which are lattice paths above the diagonal consisting of $n$ horizontal and $n$ vertical steps. Naturally, this suggests that $(R_n^{(2,1)})_{\sgn}$ should carry interesting combinatorics generalizing that of $(R_n^{(2,0)})_{\sgn}$.
Small examples (see Example \ref{ex:sign-tensor}) suggested that the graded dimension of $(R_n^{(2,1)})_{\sgn}$ coincides with that of $(R_n^{(2,0)} \otimes R_n^{(0,1)})_{\sgn}$, even though $R_n^{(2,1)}$ is a proper quotient module of $R_n^{(2,0)} \otimes R_n^{(0,1)}$.\footnote{For $2 \leq n \leq 6$, we can check that the sign character is the only irreducible character whose graded multiplicities in the two modules agree.}

Our flagship result upgrades this observed agreement of graded dimensions to a grading-preserving isomorphism between the sign-isotypic components of $R_n^{(2,0)} \otimes R_n^{(0,1)}$ and $R_n^{(2,1)}$ (Theorem~\ref{introthm:main}). 
To prove it, we appeal to a well-known identification of the sign components of $R_n^{(2,0)} \otimes R_n^{(0,1)}$ and the hook isotypic components of $R_n^{(2,0)}$. 
We then pass to harmonic space, realizing $R_n^{(2,0)}$ as $H_n^{(2,0)}$, the \newword{space of diagonal harmonics}, which is a space of polynomials in $\C[\bm{x},\bm{y}]$ in the kernel of certain differential operators. This presentation enables us to explicitly identify the hook isotypic components for $\lambda = (d+1,1^{n-d-1})$ inside $H_n^{(2,0)}$, and define a grading-preserving map from this subspace into the fermionic-degree $d$ component of $(R_n^{(2,1)})_{\sgn}$. 
Summing over all values of $d$ yields a grading-preserving injection $(R_n^{(2,0)} \otimes R_n^{(0,1)})_{\sgn} \hookrightarrow(R_n^{(2,1)})_{\sgn}$.  Since the natural surjection $\rho: R_n^{(2,0)} \otimes R_n^{(0,1)} \twoheadrightarrow R_n^{(2,1)}$ restricts to a surjection on sign-isotypic components, we obtain an isomorphism (proven in Theorems~\ref{thm:Phi-injection} and~\ref{thm:main}):
\begin{introthm}\label{introthm:main}
    For $n\geq 1$, the sign-isotypic components of $R_n^{(2,1)}$ and $R_n^{(2,0)} \otimes R_n^{(0,1)}$ are isomorphic as triply-graded $\S_n$-modules.
\end{introthm}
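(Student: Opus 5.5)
Since $\rho\colon R_n^{(2,0)}\otimes R_n^{(0,1)}\twoheadrightarrow R_n^{(2,1)}$ is an $\S_n$-equivariant surjection that preserves the triple grading, it restricts to a surjection $\rho_{\sgn}$ on sign-isotypic components. It therefore suffices to prove, in each triple degree $(i,j,d)$, the inequality
\[
\dim (R_n^{(2,0)}\otimes R_n^{(0,1)})_{\sgn}^{(i,j,d)} \le \dim (R_n^{(2,1)})_{\sgn}^{(i,j,d)}.
\]
I will obtain this by building an injective, grading-preserving linear map $\Phi$ from the left side into the right side. Together with surjectivity of $\rho_{\sgn}$, finite-dimensionality then forces $\rho_{\sgn}$ to be an isomorphism.

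\textbf{Step 1: reduce the left side to hook components.} The fermionic-degree-$d$ part of $R_n^{(0,1)}$ is the exterior power $\wedge^d$ of the reduced standard representation, whose character is the hook $\chi^{(n-d,1^d)}$. Since $V\otimes \chi^\mu$ contains the sign with multiplicity equal to the multiplicity of $\chi^{\mu'}$ in $V$, the fermionic-degree-$d$ sign component of the tensor product is isomorphic, bigraded, to the $\lambda=(d+1,1^{n-d-1})$-isotypic multiplicity space of $R_n^{(2,0)}$. Concretely, it is $\Hom_{\S_n}(\chi^{\lambda},H_n^{(2,0)})$.

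\textbf{Step 2: construct $\Phi$.} I pass to harmonics $H_n^{(2,0)}$. I expect the $\lambda$-isotypic piece to be realized by harmonic polynomials $f$ that are antisymmetric in a set of $n-d$ variable-pairs, together with their $\S_n$-translates. Fix such an $f$, say antisymmetric in $\{(x_i,y_i): i\in S\}$ with $|S|=n-d$. I then send it to the antisymmetrization
\[
\Phi(f)=\sum_{\sigma\in\S_n}\sgn(\sigma)\,\sigma\cdot\bigl(f\,\theta_{T}\bigr), \qquad T=[n]\setminus S,\ |T|=d,
\]
with the product in $\C[\bm x,\bm y,\bm\theta]$ projected to $R_n^{(2,1)}$. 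By construction this preserves the $(q,t)$-degree and places the image in fermionic degree $d$, in the sign component. In checking well-definedness I need to use that the hook-isotypic multiplicity space is spanned by such ``lowest'' vectors. Here the realization of $\wedge^d$ inside $R_n^{(0,1)}$ by the monomials $\theta_T$ (modulo $\sum\theta_i$) is the natural pairing.

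\textbf{Step 3: injectivity, the main obstacle.} I must show that $\Phi(f)\neq 0$ in $R_n^{(2,1)}$ whenever $f\neq 0$. I plan to use the harmonic model of $R_n^{(2,1)}$: the space $H_n^{(2,1)}$ of superpolynomials killed by all polarized power-sum operators $\sum_i \partial_{x_i}^a\partial_{y_i}^b\partial_{\theta_i}^c$. It is then enough to show that the image lies in $H_n^{(2,1)}$, or pairs nondegenerately with it.

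I would first try the following. Apply $\partial_{\theta_T}$ to $\Phi(f)$ and extract the $\theta_T$-coefficient, which returns a nonzero multiple of the antisymmetrization of $f$ over the stabilizer structure. This reduces to $f\neq0$ via the harmonicity of $f$ and the fact that the mixed operators act on the $\theta$-part as derivations. The delicate point is verifying that the fermionic polarization operators annihilate $\Phi(f)$. Otherwise I need to find the correct harmonic representative, possibly by twisting $f\theta_T$ with lower terms.

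Once this injectivity is established in every degree, summing over $d$ gives the dimension inequality, and the isomorphism follows.
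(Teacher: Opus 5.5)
Your outline is the paper's: $\rho$ is surjective on sign components, the $\bm\theta$-degree-$d$ sign piece of the tensor product is $\Hom_{\S_n}(S^{(d+1,1^{n-d-1})},H_n^{(2,0)})$, the lift is $f\mapsto\cY_{[n]}(\theta_T f)$, and injectivity comes from the $\theta_T$-coefficient. However, the two points you leave open are where the actual proof lies, and as written the construction fails.

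\textbf{Your domain is too large.} The harmonics antisymmetric in $S$ contain contributions from every $S^\nu$ with at least $n-d$ rows. The operation $\cY_{[n]}(\theta_T\,\cdot\,)$ automatically symmetrizes over $T$, but even after that, both hooks $(d+1,1^{n-d-1})$ and $(d,1^{n-d})$ survive. The second hook has to be removed. The paper does this by restricting to
$\widetilde{G}_\beta=\{f\in H_n^{(2,0)}:\cY_S f=f,\ \cC_T f=f,\ \cY_{S\cup\{i\}}f=0\}$.
It then proves, using the Pieri rule and an explicit check in $\bigwedge^{n-d}\C^n$, that this space is isomorphic to the $(d+1,1^{n-d-1})$ multiplicity space.

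Without this restriction the construction already fails for $n=2$, $d=1$, $S=\{1\}$, $T=\{2\}$. The harmonic $f=x_1-x_2$ is trivially antisymmetric in $S$ and symmetric in $T$. It gives $\cY_{[2]}(\theta_2 f)=\tfrac12(\theta_1+\theta_2)(x_1-x_2)$. This element has nonzero $\theta_2$-coefficient, yet it lies in $I_2^{(2,1)}$, so it is zero in $R_2^{(2,1)}$.

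The same example shows why the step you flag as delicate cannot be bypassed. A nonzero $\theta_T$-coefficient says nothing about the class in $R_n^{(2,1)}$ until you know $\Phi(f)\in H_n^{(2,1)}$. Only $H_n^{(2,1)}\cap I_n^{(2,1)}=0$ transfers nonvanishing to the quotient.

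\textbf{Harmonicity of $\Phi(f)$ is the technical core, and your proposal has no mechanism for it.} We have $D_{r,s,1}(\theta_T f)=\sum_{i\in T}\pm\theta_{T\setminus\{i\}}\partial_{x_i}^r\partial_{y_i}^s f$. Since $\cY_{[n]}$ factors through $\cY_{S\cup\{i\}}\cY_{T\setminus\{i\}}$, it suffices to show $\cY_{S\cup\{i\}}(\partial_{x_i}^r\partial_{y_i}^s f)=0$ for all $r,s\ge 0$.

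For $r=s=0$ this is exactly the extra defining condition of $\widetilde{G}_\beta$. For $r+s\ge 1$ it is a nontrivial fact, Lemma~\ref{lem:key}, valid for every harmonic $f$ antisymmetric in $S$ and symmetric in $T$. The paper proves it through the apolar form, reducing to $\cC_T(x_i^r y_i^s A)\in I_n^{(2,0)}$ for every $A$ antisymmetric in $S\cup\{i\}$ and symmetric in $T\setminus\{i\}$. That reduction uses:
\begin{itemize}
\item generalized Vandermonde determinants;
\item rewriting power sums over $T\setminus\{i\}$ as minus power sums over $S\cup\{i\}$ modulo $I_n^{(2,0)}$;
\item a Laplace expansion that produces determinants with repeated rows.
\end{itemize}

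Once the domain is correct, no correction of $f\theta_T$ by lower terms is needed. But without this lemma or a substitute, you have neither harmonicity nor injectivity, and so no dimension inequality.
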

\noindent We remark that although constructing sign-isotypic elements in the harmonic model mirrors the approach for $R_n^{(1,1)}$ by Swanson--Wallach \cite{SwansonWallach1} and for $R_n^{(0,3)}$ by the second author \cite{Lentfer2025}, the arguments used here are new. Moreover, our proof does not utilize Haiman's operator theorem \cite{Haiman1994,Haiman2002}, which is the typical method for constructing $H_n^{(2,0)}$. 
Instead, we construct the necessary elements of $H_n^{(2,0)}$ explicitly.

\subsection{Combinatorial consequences} The isomorphism above has immediate combinatorial consequences. Just like the Catalan numbers $C_n$ count the number of $n \times n$ Dyck paths, the \newword{Schr\"oder numbers} $S_n$ \cite{schroder} enumerate $n \times n$ Schr\"oder paths, that is, lattice paths above the diagonal consisting of vertical, horizontal, and diagonal steps. A bivariate polynomial generalization of $C_n$, known as the $(q,t)$-Catalan polynomial $C_n(q,t)$, was introduced by Garsia and Haiman \cite{Garsia-Haiman} and later generalized to the Schr\"oder setting by Egge, Haglund, Killpatrick, and Kremer \cite{EggeHaglundKillpatrickKremer} with the introduction of the trivariate \newword{Schr\"oder polynomials} $S_n(q,t,a)$.
By considering Schr\"oder paths with no diagonal steps above the highest north step, enumerated by the \newword{little Schr\"oder numbers} $\widetilde{S}_n$, one obtains the  
\newword{little Schr\"oder polynomials} $\widetilde{S}_n(q,t,a) = \frac{1}{1+a}S_n(q,t,a)$. It is a conjecture of F. Bergeron \cite{Bergeron2020} that the dimension of $(R_n^{(2,1)})_{\sgn}$ is precisely $\widetilde{S}_n$.

Now, since by work of Haiman \cite{Haiman2002}, the Frobenius character of $R_n^{(2,0)}$ is $\nabla e_n$, it follows from Haglund's \emph{$(q,t)$-Schr\"oder theorem} \cite{Haglund2004} that $\langle \Frob(R_n^{(2,0)} \otimes R_n^{(0,1)}), e_n \rangle$ equals the little Schr\"oder polynomial $\widetilde{S}_n(q,t,a)$. Thus, as a consequence of Theorem \ref{introthm:main}, we obtain a combinatorial formula for the Hilbert series of $(R_n^{(2,1)})_{\sgn}$ and its corresponding dimension (in Corollaries \ref{cor:sign-character-schroder} and \ref{cor:Bergeron-conj}):
\begin{introthm}\label{introthm:signcharacter}
    The sign-isotypic component of $R_n^{(2,1)}$ has Hilbert series given by
    \begin{equation*}
    \Hilb((R_n^{(2,1)})_{\sgn}; q,t;a
    )=\left\langle \Frob(R_n^{(2,1)};q,t;a), e_n\right\rangle = \widetilde{S}_n(q,t,a).
\end{equation*}
Hence, $(R_n^{(2,1)})_{\sgn}$ has dimension $\widetilde{S}_n$, so F. Bergeron's conjecture \cite{Bergeron2020} is true.
\end{introthm}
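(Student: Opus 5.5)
The plan is to deduce the statement from Theorem~\ref{introthm:main} together with known results on $R_n^{(2,0)}$. Since Theorem~\ref{introthm:main} gives a triply-graded isomorphism of sign-isotypic components, it suffices to compute the triply-graded multiplicity of the sign character in $R_n^{(2,0)} \otimes R_n^{(0,1)}$. The first equality, $\Hilb((R_n^{(2,1)})_{\sgn}) = \langle \Frob(R_n^{(2,1)}), e_n\rangle$, is formal. The sign-isotypic component is the sign character taken with multiplicity, and $e_n = s_{(1^n)}$ is orthonormal to all other Schur functions, so pairing with $e_n$ extracts exactly this graded multiplicity.

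Next I would compute the Frobenius series of the tensor product. The ring $R_n^{(0,1)}$ is the exterior algebra on $\bm\theta$ modulo the ideal generated by $\theta_1+\cdots+\theta_n$. Its degree-$d$ piece is $\wedge^d$ of the reflection representation, with Frobenius character $s_{(n-d,1^d)}$, for $0\le d\le n-1$. Hence
\[
\Frob(R_n^{(2,0)}\otimes R_n^{(0,1)}) = \sum_{d=0}^{n-1} a^d\,\bigl(\nabla e_n * s_{(n-d,1^d)}\bigr),
\]
using Haiman's theorem that $\Frob(R_n^{(2,0)})=\nabla e_n$ and the fact that Frobenius characters of inner tensor products are Kronecker products. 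Pairing with $e_n$ and using $\langle f * g, e_n\rangle = \langle f, \omega g\rangle$ with $\omega s_{(n-d,1^d)} = s_{(d+1,1^{n-d-1})}$ gives
\[
\sum_{d=0}^{n-1} a^d \langle \nabla e_n, s_{(d+1,1^{n-d-1})}\rangle .
\]
This is the hook-component identification already mentioned in the introduction.

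The remaining step is to identify this sum with $\widetilde S_n(q,t,a)$. This is the content of Haglund's $(q,t)$-Schr\"oder theorem: $\langle \nabla e_n, e_{n-d}h_d\rangle$ equals the Schr\"oder polynomial counting paths with $d$ diagonal steps. One then expands $e_{n-d}h_d = s_{(d+1,1^{n-d-1})} + s_{(d,1^{n-d})}$ by Pieri, with the convention that the second term is absent for $d=0$. Summing $a^d\langle\nabla e_n,e_{n-d}h_d\rangle$ over $d$ then yields $(1+a)\sum_d a^d\langle \nabla e_n, s_{(d+1,1^{n-d-1})}\rangle$. Dividing by $1+a$ gives $\widetilde S_n(q,t,a)$.

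The main care point is matching conventions. We must check that the fermionic grading $a$ on $R_n^{(0,1)}$ corresponds to the hook $(d+1,1^{n-d-1})$ after the $\omega$ twist, rather than to its conjugate. We must also check that Haglund's normalization produces $S_n$ rather than a $q,t$-reversed version. The symmetry $\nabla e_n(q,t)=\nabla e_n(t,q)$ together with the known symmetry of $S_n$ should settle the latter. Finally, setting $q=t=a=1$ gives $\dim (R_n^{(2,1)})_{\sgn} = \widetilde S_n$, which proves F. Bergeron's conjecture.
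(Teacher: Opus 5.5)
Your proposal is correct and follows the paper's own route. Theorem~\ref{introthm:main} transfers the question to $R_n^{(2,0)}\otimes R_n^{(0,1)}$, whose sign multiplicity the paper computes as you do, via Kronecker products with $g(\lambda,\mu,1^n)=\delta_{\mu,\lambda'}$ and Haiman's $\nabla e_n$ (Proposition~\ref{prop:HS-generalization}, Corollary~\ref{cor:(2,0)(0,1)bound}), and then specializes $q=t=a=1$. The one cosmetic difference is in the last step: the paper cites the one-hook form of Haglund's theorem (Theorem~\ref{thm:qt-Schroeder-one-hook}) directly, whereas you go through $e_{n-d}h_d$, Pieri, and division by $1+a$ (Proposition~\ref{prop:BigSmall-Schroder} read backwards) to reach $S_n(q,t,a)/(1+a)=\widetilde S_n(q,t,a)$, and your worry about a $q,t$-reversal is moot since the paper adopts Haglund's conventions verbatim.
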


Recalling Zabrocki's conjecture, it follows from Zabrocki's 4-variable Catalan theorem \cite{Zabrocki-4Catalan-2016} (see also Proposition~\ref{prop:zabrocki-alternating}) that the sign-character component of the right-hand side of Equation~\eqref{eq:delta thm} also yields $\widetilde{S}_n(q,t,a)$. We verify the sign-character component of Zabrocki's conjecture (as Corollary~\ref{cor:zabrocki-conjecture-sign}):
\begin{introcor} 
  For all $n\geq 1$,
    \begin{equation*}
       \left\langle \Frob(R_n^{(2,1)};q,t;a),e_n \right\rangle = \left\langle \sum_{d=0}^{n-1} \Delta'_{e_{n-1-d}}(e_n) a^d, e_n \right\rangle.
    \end{equation*}
\end{introcor}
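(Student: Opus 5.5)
The corollary follows by chaining two identities, each of which produces $\widetilde{S}_n(q,t,a)$. No new module-theoretic input is needed.

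For the left-hand side, I invoke Theorem~\ref{introthm:signcharacter}. Its proof runs through Theorem~\ref{introthm:main}: that theorem identifies $(R_n^{(2,1)})_{\sgn}$ with $(R_n^{(2,0)}\otimes R_n^{(0,1)})_{\sgn}$ as triply graded spaces. The latter space is computed as follows.
\begin{itemize}
\item Since $R_n^{(0,1)}$ in fermionic degree $d$ is $\bigwedge^d$ of the reflection representation, its Frobenius characteristic is $s_{(n-d,1^d)}$.
\item Hence $\langle \Frob(R_n^{(2,0)}\otimes R_n^{(0,1)}),e_n\rangle=\sum_d a^d\langle \nabla e_n, s_{(d+1,1^{n-d-1})}\rangle$, using Haiman's $\Frob(R_n^{(2,0)})=\nabla e_n$ and the fact that tensoring with the sign representation transposes shapes.
\item Haglund's $(q,t)$-Schröder theorem evaluates these hook coefficients, and the result is $\widetilde{S}_n(q,t,a)$.
\end{itemize}
So the left-hand side equals $\widetilde{S}_n(q,t,a)$.

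For the right-hand side, I need $\sum_d a^d\langle\Delta'_{e_{n-1-d}}e_n,e_n\rangle=\widetilde{S}_n(q,t,a)$. This is the content of Zabrocki's 4-variable Catalan theorem, which I would use in the form of Proposition~\ref{prop:zabrocki-alternating}. That theorem expresses $\sum_d z^d\Delta'_{e_{n-1-d}}e_n$, paired against $e_n$, as a Schröder-type enumeration. Specialized to the sign character, it gives Schröder paths weighted by $q^{\area}t^{\bounce}$ (or $q^{\dinv}t^{\area}$), with $a$ marking diagonal steps and no diagonal steps above the highest north step. Equivalently, it gives $\frac{1}{1+a}S_n(q,t,a)$. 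Comparing the two evaluations finishes the proof.

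The main obstacle is bookkeeping of conventions, not content. Three things must line up: the grading variable $a$ (fermionic degree $d$) with the exponent of $a$ in $\Delta'_{e_{n-1-d}}$; the hook $(d+1,1^{n-d-1})$ versus its transpose; and the $\frac{1}{1+a}$ normalization, which is exactly why the answer is the little Schröder polynomial rather than $S_n$. I would check this alignment at $n=2$, where both sides equal $q+t+a$, to fix any sign or transpose ambiguity.
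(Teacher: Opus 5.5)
Your proposal is correct and follows the paper's route: the left-hand side equals $\widetilde{S}_n(q,t,a)$ by the sign-component isomorphism of Theorem~\ref{thm:main} combined with the Kronecker/hook computation of Corollary~\ref{cor:(2,0)(0,1)bound} (this is Corollary~\ref{cor:sign-character-schroder}), the right-hand side equals it by Proposition~\ref{prop:zabrocki-alternating}, and the two are compared. The only difference is cosmetic: the paper proves Proposition~\ref{prop:zabrocki-alternating} by an elementary alternating-sum argument, using $S_{n,d}=\langle\Delta_{e_{n-d}}e_n,e_n\rangle$ and $\Delta_{e_{n-d}}=\Delta'_{e_{n-d}}+\Delta'_{e_{n-d-1}}$, instead of citing Zabrocki's 4-variable Catalan theorem, but since you invoke the proposition itself nothing is missing.
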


\subsection{Connections with link homology} Our results relate to link homology. 
\newword{Khovanov--Rozansky homology} is a triply-graded link homology theory developed in \cite{KR1, KR2, Kh} which categorifies the HOMFLY-PT polynomial and associates to a knot (or link) a triply-graded chain complex of Soergel bimodules. 
Computing the homology and the graded dimensions of these complexes for large families of knots is a notoriously difficult problem and extremely active research area (see for instance \cite{aganagic, cherednik, EH16, CD16, CD17, hog-mellit, GMO, KT24, Mellit-Homology}). 
In particular, due to a series of groundbreaking conjectures by Gorsky, Negu\c{t}, Oblomkov, Rasmussen, Shende, and others \cite{Gorsky_Oblomkov_Rasmussen_Shende_2014, GorskyNegut, Gorsky-Catalan, GNR21, ORS, OS}, particular interest has been directed towards finding representation theoretic and combinatorial interpretations of the Khovanov--Rozansky homology of algebraic links \cite{CGHM, GL1, GL2, Hog17, GMV1, GMV2, Mellit}. 

The most fundamental algebraic links are the \newword{torus knots} $T(n,m)$, where $n,m$ are relatively prime positive integers. 
Hogancamp \cite{Hog17} proved a conjecture of Gorsky \cite{Gorsky-Catalan} by computing the Poincar\'e series $\mathcal{P}_K(q,t,a)$ of the Khovanov--Rozansky homology of $K= T(n,n+1)$ and showing it equals $\frac{1+a}{1-q}\widetilde{S}_n(q,t,a)$. 
In light of Theorem \ref{introthm:signcharacter}, the next result is immediate (proved as Theorem~\ref{thm:torusknot}):
\begin{introthm} The Hilbert series of  $(R_n^{(2,1)})_{\sgn}$ determines the Poincar\'e series of the Khovanov--Rozansky homology of the $(n,n+1)$-torus knot $K$. Namely, 
  \begin{equation*}\label{eq:intro-KR-(2,1)}
  \mathcal{P}_K(q,t,a) = \frac{1+a}{1-q} \; 
\Hilb((R_n^{(2,1)})_{\sgn}; q,t;a).
\end{equation*}
\end{introthm}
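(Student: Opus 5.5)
This follows by combining Hogancamp's computation with Theorem~\ref{introthm:signcharacter}. First I would recall Hogancamp's result \cite{Hog17}, which confirms Gorsky's conjecture \cite{Gorsky-Catalan}. For $K=T(n,n+1)$ it gives
\[
\mathcal{P}_K(q,t,a) = \frac{1+a}{1-q}\,\widetilde{S}_n(q,t,a),
\]
with $\widetilde{S}_n(q,t,a)=\frac{1}{1+a}S_n(q,t,a)$ the little Schr\"oder polynomial. Next I would substitute the identity $\Hilb((R_n^{(2,1)})_{\sgn};q,t;a)=\widetilde{S}_n(q,t,a)$ from Theorem~\ref{introthm:signcharacter}. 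Together these yield the displayed formula at once.

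The only real work is in matching conventions, and I expect this to be the main obstacle. Hogancamp's Poincar\'e series uses his own normalization of the three gradings $(q,t,a)$; in particular his $a$ records Hochschild degree and his $q,t$ come from the Soergel and homological gradings. These must be checked against the $(q,t,a)$ of $R_n^{(2,1)}$, where $q,t$ are the two bosonic degrees and $a$ is the fermionic degree. The $(q,t,a)$-Schr\"oder polynomial of \cite{EggeHaglundKillpatrickKremer, Haglund2004} is also stated in terms of area, bounce (or dinv) and the number of diagonal steps.

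To settle this I would proceed in two steps. First, I would restate Hogancamp's formula in the variables Gorsky used in \cite{Gorsky-Catalan}, where the answer is written as $\frac{1+a}{1-q}$ times the Schr\"oder-path generating function. Second, I would use the $q\leftrightarrow t$ symmetry of $\widetilde{S}_n$ to absorb any swap of the bosonic variables. If necessary I would also cite Gorsky--Mellit's identification of the same series with $(R_n^{(2,0)}\otimes R_n^{(0,1)})_{\sgn}$, which is compatible through Theorem~\ref{introthm:main}.

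Any remaining overall monomial shift or substitution such as $q\mapsto q^2$ would be fixed by comparing the case $n=1$, where both sides equal $\frac{1+a}{1-q}$ for the unknot. As a sanity check I would also compare the case $n=2$ (the trefoil) directly.
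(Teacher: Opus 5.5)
Your argument is correct and is essentially the paper's proof. The paper cites Hogancamp in the form $\mathcal{P}_K=\frac{1}{1-q}\sum_{d}\langle\nabla e_n,h_de_{n-d}\rangle a^d$ (Theorem~\ref{thm:Hog-torus}), rewrites this as $\frac{1+a}{1-q}\widetilde{S}_n(q,t,a)$ via Proposition~\ref{prop:BigSmall-Schroder}, and then substitutes Corollary~\ref{cor:sign-character-schroder}; the convention-matching you anticipate is simply absorbed into citing Hogancamp's theorem in that normalization, so that part of your plan is unnecessary (and a check at $n=1$ alone could not rigorously rule out a substitution such as $q\mapsto q^2$ anyway).
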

Our result is the second to establish a direct link between the Khovanov--Rozansky homology of the $(n,n+1)$-torus knot and bosonic-fermionic coinvariant rings. 
In early 2026, Gorsky and Mellit \cite{GorskyMellit} constructed an isomorphism between the homology of $T(n,n+1)$ and the sign-isotypic component of the tensor product $R_n^{(2,0)} \otimes R_n^{(0,1)}$. 
Although their work combined with our Theorem~\ref{thm:main} implies Theorem~\ref{thm:torusknot}, our derivation and methodology are distinct and independent. 
In particular, Gorsky and Mellit's proof fundamentally relies on Haiman's operator theorem \cite{Haiman1994, Haiman2002} to construct $H_n^{(2,0)}$, which, as noted above, we do not use.

\subsection{Further directions} The Poincar\'e series for $(n,m)$-torus links was computed by Hogancamp and Mellit \cite{hog-mellit}, and recently extended to Coxeter knots by Caprau, the first author, Hogancamp, and Mazin \cite{CGHM}. 
In all these cases, combinatorial formulas in terms of generalized \emph{triangular Schr\"oder polynomials} were given. 
On the other hand, although from computational evidence certain geometric subspaces of $R_n^{(2,1)}$ seem related to the $(n,bn+1)$-torus knot, these spaces do not yet have a harmonic-space interpretation, impeding generalizations of the present work. 

The homology of $T(n,m)$ can be computed from certain representations of rational Cherednik algebras.
While Gordon \cite{Gordon} connected these representations when $m=n+1$ (and thus the homology of $T(n,n+1)$) to $R_n^{(2,0)}$, there is no known or expected connection for generic $m$.
In particular, outside of $m=n+1$, these modules have no known description as the quotient of a polynomial ring, which is the starting point for a characterization as a coinvariant space.
Thus a `super' generalization of this theory is unknown.
Nonetheless, we hope the results in this article initiate further exploration relating link homology theories to some `higher' analogue of superspace diagonal coinvariant spaces.

\subsection{Outline of the paper}
The paper is organized as follows. In Section~\ref{sec:Schroder-polynomials} we overview the necessary combinatorial background, including both  Schr\"oder polynomials $S_n(q,t,a)$ and $\widetilde{S}_n(q,t,a)$ and their properties. In Section~\ref{sec:background}, we recall the various bosonic-fermionic coinvariant rings, compute sign-characters for certain tensor products of these spaces, and clarify the relationship between $R_n^{(2,1)}$ and $R_n^{(2,0)}\otimes R_n^{(0,1)}$. The core of the paper lies in Section~\ref{sec:main}. In Section~\ref{subsec:identifyhook} we pass to harmonic spaces and identify the required hook-isotypic components inside $H_n^{(2,0)}$ in Proposition~\ref{prop:hook-hom-iso} and Theorem~\ref{thm:tildehook-hom-iso}. Then by way of Theorem~\ref{thm:harmonic-2-1}, in Section~\ref{subsec:harmoniclift} we construct the map lifting the hook components of $H_n^{(2,0)}$ into the sign component of $H_n^{(2,1)}$. In Section~\ref{subsec:main theorem} we establish the main results, Theorems \ref{thm:Phi-injection} and \ref{thm:main}, where the isomorphism between sign-isotypic components is proven and the Hilbert series is shown to equal the little Schr\"oder polynomial. In Section~\ref{subsec:zabrocki-conj} we continue with Corollary~\ref{cor:zabrocki-conjecture-sign} proving Zabrocki's conjecture for the sign-isotypic component. The paper concludes in Section~\ref{sec:link-homology} where we prove Theorem~\ref{thm:torusknot} and establish the direct connection between $T(n,n+1)$ and $(R_n^{(2,1)})_{\sgn}$.

\section*{Acknowledgements}

We thank Sylvie Corteel, Eugene Gorsky, Mark Haiman, Xinchun Ma, Arun Ram, Brendon Rhoades, Christopher Ryba, George Seelinger, and José Simental Rodríguez for helpful conversations.
N.G. was supported by the Natural Sciences and Engineering Research Council of Canada, RGPIN-2026-07206. 
J.L. was partially supported by the National Science Foundation Graduate Research Fellowship DGE-2146752. H.M. was supported by a National Science Foundation Graduate Research Fellowship under Grant No.\ 2141064.
We thank the mathematical research institute MATRIX in Australia where part of this research was performed while N.G. and J.L. were in attendance at the program ``Symmetric Functions and Stochastic Processes.''

\section{Schr\"oder polynomials}\label{sec:Schroder-polynomials}

A \newword{Schr\"oder path} of order $n$ is a lattice path from $(0,0)$ to $(n,n)$ composed of north steps $N = (0,1)$, east steps $E = (1,0)$, and diagonal steps $D = (1,1)$, which does not go below the diagonal line $y=x$.
For $0 \leq d \leq n$, let $L_{n,n,d}^+$ denote the set of Schr\"oder paths of order $n$ with exactly $d$ diagonal steps. 
Write $L_{n,n}^+ = \bigsqcup_{d=0}^{n} L_{n,n,d}^+$ for the set of all Schr\"oder paths of order $n$.
For more details on Schr\"oder numbers and diagonal coinvariants, see \cite[Chapter 4]{Haglund2008}. 
The cardinality of $L_{n,n}^+$ is the \newword{Schr\"oder number}, which is given by
\begin{equation*}
    S_n = \left|L_{n,n}^+\right| = \frac{2}{n} \sum_{i=0}^{n-1} \binom{n}{i} \binom{n}{i+1} 2^i.
\end{equation*}

Two important statistics on Schr\"oder paths, \newword{area} and \newword{bounce}\footnote{The bounce statistic, and thus the choice of little Schr\"oder paths in what follows, follow a different convention in \cite{EggeHaglundKillpatrickKremer}. We follow \cite{Haglund2008} in this paper.}, are defined in \cite[Chapter 4]{Haglund2008}.
These statistics extend the definitions of area and bounce on Dyck paths (which are Schr\"oder paths with no diagonal steps).
Given a Schr\"oder path $\pi \in L_{n,n}^+$, write $\area(\pi)$ for its area statistic, $\bounce(\pi)$ for its bounce statistic, and $\diag(\pi)$ for the number of diagonal steps.
Define
\begin{equation*}
    S_{n,d}(q,t) = \sum_{\pi \in L_{n,n,d}^+} q^{\area(\pi)} t^{\bounce(\pi)}.
\end{equation*}

We call a Schr\"oder path $\pi$ a \newword{little Schr\"oder path} if it has no diagonal step $D$ above the highest north step $N$. 
Exactly half of the Schr\"oder paths are little Schr\"oder paths.\footnote{There exist other combinatorial criteria by which the Schr\"oder paths can be partitioned into two equinumerous sets, for example, paths with no diagonal step on the main diagonal $y=x$ (see for example \cite{ParkKim}).} 
Let $\tilde{L}_{n,n,d}^+\subseteq L_{n,n,d}^+$ denote the set of little Schr\"oder paths with $d$ diagonal steps.

By convention, a little Schr\"oder path must have at least one north step, so there are none with only diagonal steps; hence we restrict to considering $0 \leq d \leq n-1$ diagonal steps. 
Define $\tilde{L}_{n,n}^+ = \bigsqcup_{d=0}^{n-1} \tilde{L}_{n,n,d}^+$.
The cardinality of $\tilde{L}_{n,n}^+$ is the \newword{little Schr\"oder number}, which is given by
\begin{equation*}
    \widetilde{S}_n = \left|\tilde{L}_{n,n}^+\right| = \frac{1}{n} \sum_{i=0}^{n-1} \binom{n}{i} \binom{n}{i+1} 2^i,
\end{equation*}
and is exactly half of the Schr\"oder number $S_n$.
For combinatorial interpretations of $S_n$ and $\widetilde{S}_n$, see \cite[Chapter 6 Exercise 39]{StanleyEC2}. 
For history of these numbers, see \cite{Stanley-Schroder} and \cite[Chapter 6 Notes]{StanleyEC2}.

Then define
\begin{equation*}
    \widetilde{S}_{n,d}(q,t) = \sum_{\pi \in \tilde{L}_{n,n,d}^+} q^{\area(\pi)}t^{\bounce(\pi)}.
\end{equation*}
We adopt the convention that $\widetilde{S}_{n,d}(q,t) = 0$ whenever $d <0$ or $d \geq n$ to streamline formulas.

\begin{figure}
    \centering
    \schroderpath{1,0,1,0,1,0}{3}{0.8}{}{}{}{} \quad 
    \schroderpath{1,1,0,0,1,0}{3}{0.8}{}{0/1}{}{} \quad 
    \schroderpath{1,0,1,1,0,0}{3}{0.8}{}{1/2}{}{} \quad 
    \schroderpath{1,1,0,1,0,0}{3}{0.8}{}{0/1,1/2}{}{} \quad
    \schroderpath{1,1,1,0,0,0}{3}{0.8}{}{0/1,0/2,1/2}{}{} \quad

    \schroderpath{1,0,2,1,0}{3}{0.8}{}{}{}{} \quad 
    \schroderpath{2,1,0,1,0}{3}{0.8}{}{}{}{} \quad 
    \schroderpath{2,1,1,0,0}{3}{0.8}{}{1/2}{}{} \quad 
    \schroderpath{1,2,0,1,0}{3}{0.8}{}{0/1}{}{} \quad 
    \schroderpath{1,2,1,0,0}{3}{0.8}{}{0/1,1/2}{}{} \quad 

    \schroderpath{2,2,1,0}{3}{0.8}{}{}{}{} \quad

    \caption{The eleven little Schr\"oder paths at $n=3$. The number of red triangles is the area statistic.}
    \label{fig:little-Schroder-3}
\end{figure}

\subsection{The \texorpdfstring{$(q,t)$}{(q,t)}-Schr\"oder theorem}
Both polynomials $S_{n,d}(q,t)$ and $\widetilde{S}_{n,d}(q,t)$ can be computed using the nabla operator (defined in Equation~\eqref{eq:nabla-defintion}), connecting them with the theory of diagonal coinvariants; this is the route via which they enter our story.

Let $e_n$ and $h_n$ denote the $n$th elementary and homogeneous symmetric functions and $s_\lambda$ the Schur function for partition $\lambda$ in an infinite number of commuting variables. The \emph{$(q,t)$-Schr\"oder theorem} relates the Schr\"oder polynomials to the so-called hook components of certain symmetric functions by pairing them, via the Hall inner product, with the function $e_{n-d}h_d$. Its classical form below was conjectured by Egge, Haglund, Killpatrick, and Kremer \cite{EggeHaglundKillpatrickKremer} and proven by Haglund \cite{Haglund2004}, and recently generalized by Caprau, the first author, Hogancamp, and Mazin \cite{CGHM}.

\begin{theorem}[\cite{Haglund2004}]\label{thm:qt-Schroeder-two-hooks}
    For all $0 \leq d \leq n$,
    \begin{equation*}
        S_{n,d}(q,t) = \left\langle \nabla e_n, e_{n-d} h_d \right\rangle.
    \end{equation*}
\end{theorem}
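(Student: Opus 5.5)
This is Haglund's $(q,t)$-Schröder theorem, which the paper cites rather than reproves. A proof along Haglund's lines goes as follows. The starting point is to express $\langle \nabla e_n, e_{n-d}h_d\rangle$ through the modified Macdonald expansion
\[
e_n=\sum_{\mu\vdash n}\frac{\tilde H_\mu\, M B_\mu \Pi_\mu}{w_\mu},
\]
so that $\nabla e_n=\sum_\mu T_\mu \tilde H_\mu M B_\mu\Pi_\mu/w_\mu$. Pairing with $e_{n-d}h_d$ requires the hook pairing $\langle \tilde H_\mu, e_{n-d}h_d\rangle$. This is computed from the generating identity
\[
\sum_{d}\langle \tilde H_\mu, e_{n-d}h_d\rangle z^d=\prod_{c\in\mu,\,c\neq(0,0)}(z+q^{a'}t^{l'}),
\]
which follows from the plethystic evaluation $\tilde H_\mu[1-u]=\prod_{c\neq(0,0)}(1-uq^{a'}t^{l'})$. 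So the algebraic side becomes an explicit rational sum
\[
\sum_\mu T_\mu M B_\mu\Pi_\mu\, e_d[B_\mu-1]/w_\mu .
\]

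Next, decompose the combinatorial side recursively. Refine $S_{n,d}(q,t)$ by tracking, for example, the number of north steps in the first bounce segment (equivalently, the position of the first bounce) and the diagonal steps encountered there. Summing over the choices of $D$ steps along the first bounce run gives a recursion of Garsia–Haglund type. In it, $q$-binomial coefficients count the interleaving of $N$ and $D$ steps in consecutive bounce blocks, and powers of $t$ record the bounce contribution. Unrolling the recursion yields an explicit sum over compositions of $n$ into bounce-block sizes, with $d$ diagonal steps distributed among the blocks, of products of $q$-binomials times $t$-powers.

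The final and hardest step is to show that the algebraic sum obeys the same recursion. Following the proof of $C_n(q,t)=\langle\nabla e_n,e_n\rangle$, introduce auxiliary functions $F_{n,k}=\langle \nabla E_{n,k}, e_{n-d}h_d\rangle$, where the $E_{n,k}$ are defined by $e_n[X\frac{1-z}{1-q}]=\sum_k \frac{(z;q)_k}{(q;q)_k}E_{n,k}$. The main technical obstacle is proving that the $F_{n,k}$ satisfy the refined recursion. This needs a summation formula for $\nabla E_{n,k}$ paired against $e_{n-d}h_d$. I would first apply the Pieri-rule/plethystic manipulations of the Catalan case, pushing the factor $h_d$ through by means of $h_d^\perp$ acting on $\nabla E_{n,k}$. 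I would then verify that the resulting identity reduces to a $q$-Vandermonde summation.

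The argument then closes by matching the base case $n=1$ and the recursion on both sides, which gives the equality for all $0\le d\le n$.
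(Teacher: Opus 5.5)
Your plan (refine by the first bounce, set $F_{n,k}=\langle\nabla E_{n,k},e_{n-d}h_d\rangle$, and match recursions) follows the route of Haglund's proof, which the paper cites without proof. As written, however, it contains a concrete error and leaves out the decisive step.

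\textbf{The error: the hook pairing.} The correct evaluation is $\tilde H_\mu[1-u]=\prod_{c\in\mu}(1-uq^{a'}t^{l'})$ \emph{including} the corner cell, which contributes the factor $1-u$. It has to: $f[1-u]$ vanishes at $u=1$ for any homogeneous $f$ of positive degree, e.g.\ $s_1[1-u]=1-u$. Combined with $f[1-u]=\sum_k(-u)^k\langle f,h_{n-k}e_k\rangle$, this gives $\langle\tilde H_\mu,e_{n-d}h_d\rangle=e_{n-d}[B_\mu]$. Hence $\sum_d\langle\tilde H_\mu,e_{n-d}h_d\rangle z^d=\prod_{c\in\mu}(z+q^{a'}t^{l'})$.

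Your product over $c\neq(0,0)$ is missing the factor $1+z$. Its $z^d$-coefficients are the single-hook pairings $\langle\tilde H_\mu,s_{(d+1,1^{n-d-1})}\rangle=e_{n-1-d}[B_\mu-1]$, so you have computed the little Schr\"oder side. The missing factor is exactly the Macdonald-level form of $S_n=(1+a)\widetilde{S}_n$.

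Your displayed rational sum, with $e_d[B_\mu-1]$, does not even agree with your own generating function. It equals $\langle\nabla e_n,s_{(n-d,1^d)}\rangle$, which is $1$ at $d=0$ (rather than $C_n(q,t)$, for $n\ge 2$) and $0$ at $d=n$ (rather than $S_{n,n}=1$). The correct expression is $\sum_\mu T_\mu e_{n-d}[B_\mu]MB_\mu\Pi_\mu/w_\mu$.

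\textbf{Two smaller points.} First, the index $k$ of $E_{n,k}$ must be the height of the first bounce, i.e.\ the number of $N$ and $D$ steps before the first $E$. It is not the number of $N$ steps, so your ``equivalently'' is false once $D$ steps appear. For $n=2$ one finds $\nabla E_{2,1}=t\,s_{11}$, so $\langle\nabla E_{2,1},e_1h_1\rangle=t$ isolates $NED$ alone. Yet $DNE$ and $NDE$ also have a single $N$ before their first $E$. Second, you need $\sum_k E_{n,k}=e_n$ (set $z=q$) to pass from the $F_{n,k}$ back to $S_{n,d}$.

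\textbf{The gap: the core step is only announced.} The heart of the theorem is that $\langle\nabla E_{n,k},e_{n-d}h_d\rangle$ obeys the Schr\"oder recursion. You never write that recursion down; it needs an extra sum over the number of $D$ steps among the first $k$ non-$E$ steps, with its own $q$-binomial factor. Nor do you state the identity you would prove.

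``Push $h_d$ through via $h_d^\perp$, then reduce to a $q$-Vandermonde sum'' is not an argument. No formula for $h_d^\perp\nabla E_{n,k}$ is given, and the Catalan computation does not carry over automatically. There, pairing with $e_n$ contributes only $T_\mu$; here every Macdonald term carries the extra factor $e_{n-d}[B_\mu]$. Establishing a plethystic identity that controls this factor is exactly the content of Haglund's paper. Without it, or an explicit citation of it, your proposal is an outline of the cited proof rather than a proof.
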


Going from all Schr\"oder paths to just the little Schr\"oder paths isolates a single Schur function indexed by a hook shape. 
Note that the hook shape $(d+1,1^{n-d-1})$ is indeed a valid partition for $0 \leq d \leq n-1$.
Then the $(q,t)$-Schr\"oder theorem can be restated as follows.
\begin{theorem}[{\cite[Theorem 4.3]{Haglund2008}}]\label{thm:qt-Schroeder-one-hook}
    For all $0 \leq d \leq n-1$, 
    \begin{equation*}
        \widetilde{S}_{n,d}(q,t) = \left\langle \nabla e_n, s_{(d+1,1^{n-d-1})} \right\rangle.
    \end{equation*}
\end{theorem}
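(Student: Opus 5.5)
The plan is to deduce Theorem~\ref{thm:qt-Schroeder-one-hook} from Theorem~\ref{thm:qt-Schroeder-two-hooks} by a telescoping argument on both sides. On the symmetric function side, the Pieri rule gives
\[
e_{n-d}h_d = s_{(d+1,1^{n-d-1})} + s_{(d,1^{n-d})}
\]
for $1\le d\le n-1$, with the edge cases $e_n h_0 = s_{(1^n)}$ and $e_0h_n = s_{(n)}$. Set $\chi_d=\langle \nabla e_n, s_{(d+1,1^{n-d-1})}\rangle$ for $0\le d\le n-1$ and $\chi_d=0$ otherwise. Theorem~\ref{thm:qt-Schroeder-two-hooks} then reads $S_{n,d}(q,t)=\chi_d+\chi_{d-1}$ for all $0\le d\le n$. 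Because $\chi_{-1}=0$, this triangular system has the unique solution $\chi_d=\sum_{j=0}^{d}(-1)^{d-j}S_{n,j}(q,t)$.

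On the combinatorial side, I will show that $S_{n,d}(q,t)=\widetilde S_{n,d}(q,t)+\widetilde S_{n,d-1}(q,t)$. This is the refined, statistic-preserving form of ``exactly half of the Schr\"oder paths are little.'' Given the recursion, uniqueness forces $\widetilde S_{n,d}=\chi_d$, which proves the theorem. The case $d=n$ is a consistency check: the only path with $n$ diagonal steps is $D^n$, with area and bounce $0$, so $S_{n,n}=1=\chi_{n-1}=\langle\nabla e_n,s_{(n)}\rangle$.

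The main step is a bijection between non-little paths in $L^+_{n,n,d}$ and little paths in $\tilde L^+_{n,n,d-1}$ that preserves area and bounce. A non-little path has at least one $D$ step after its last $N$ step. After that last $N$, the path can only continue with $E$ and $D$ steps while staying weakly above the diagonal, and it ends at $(n,n)$. I would try the following map: take the last $N$ step together with the first subsequent $D$ step, and replace them by an appropriate rearrangement so that one $D$ is converted into an $N$ and an $E$ placed in a way that keeps the area fixed. For example, one could replace the final segment $N\,E^k D$ by $D\,E^k N$ or a similar local move.

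The real obstacle is compatibility with the bounce statistic in the convention of \cite{Haglund2008}. There the bounce path treats diagonal steps through a specific rule, which is exactly why the footnote singles out this convention. I would first verify a candidate local move on the $n=3$ data in Figure~\ref{fig:little-Schroder-3}. If a direct bijection proves too delicate, the fallback is to use Haglund's explicit formula for $S_{n,d}(q,t)$ as a sum over compositions, given in \cite[Chapter 4]{Haglund2008}, via the bounce decomposition. In that formula the diagonal steps lying above the highest north step contribute an isolated factor of the form $\binom{\cdot}{\cdot}$. Splitting that factor with a $q$-Pascal-type identity should produce $\widetilde S_{n,d}+\widetilde S_{n,d-1}$ term by term.
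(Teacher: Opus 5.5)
Your reduction is logically sound. Via the Pieri identity, Theorem~\ref{thm:qt-Schroeder-two-hooks} is the triangular system $S_{n,d}=\chi_d+\chi_{d-1}$, so the statement follows once $S_{n,d}(q,t)=\widetilde S_{n,d}(q,t)+\widetilde S_{n,d-1}(q,t)$ is known independently. The paper does not prove this theorem: it quotes it from Haglund's book and runs your argument backwards to obtain Proposition~\ref{prop:Schroder-splitting}. On your route, then, the splitting identity must be proved combinatorially. It is the entire content beyond Theorem~\ref{thm:qt-Schroeder-two-hooks}, and the proposal does not establish it.

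Your concrete move fails for three reasons.
\begin{enumerate}
\item $NE^kD\mapsto DE^kN$ has one $D$ on each side, so it does not change the number of diagonal steps and cannot land in $\tilde L^+_{n,n,d-1}$.
\item It can also cross below the diagonal and changes the area.
\item Acting on the \emph{first} $D$ after the last $N$ is the wrong choice anyway, since any later $D$ would still lie above the new highest $N$.
\end{enumerate}
The fallback is also misdescribed. In Haglund's composition formula, a $D$ placed immediately after the last $N$ step, before the final run of $E$ steps, is counted in the section below the top one. So the $D$ steps above the highest north step are not isolated in the top-section $q$-binomial, and a termwise $q$-Pascal split does not directly apply.

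The bijection that works is the simplest one. A non-little path ends in $DE^k$, where this $D$ is its last diagonal step; replace the suffix by $NE^{k+1}$. The inverse replaces the final $NE$ of a little path (which ends in $NE^m$ with $m\ge 1$) by $D$. So this is a bijection from the non-little paths in $L^+_{n,n,d}$ onto $\tilde L^+_{n,n,d-1}$, and $D^n\mapsto D^{n-1}NE$ is immediate.

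Area is unchanged. The lower triangle under the removed $D$ and the full square created by $NE$ occupy the same cell, and both are counted exactly when $k\ge 1$.

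For bounce, write $\pi=uDE^k$ and let $\pi'=u'E^k$ be the Dyck path of order $n-d$ obtained by deleting the $D$ steps. Let its bounce touch points be $0=j_0<\cdots<j_b=n-d$. In Haglund's convention (the one encoded by his formula), $\bounce(\pi)$ is $\bounce(\pi')$ plus $i$ for each $D$ in section $i$. Here section $0$ is the initial north run, and a $D$ at a corner between sections belongs to the lower one.
\begin{itemize}
\item The removed $D$ sits at the point $(n-d-k,n-d)$ of $\pi'$. It contributes $b$ if $n-d-k>j_{b-1}$, and $b-1$ otherwise.
\item The new collapsed path $u'NE^{k+1}$ has touch points $j_0,\dots,j_{b-1},n-d,n-d+1$ in the first case, and $j_0,\dots,j_{b-1},n-d+1$ in the second.
\item Its Dyck bounce is therefore $\bounce(\pi')+b$, respectively $\bounce(\pi')+b-1$.
\item Every other $D$ keeps its section index.
\end{itemize}
Hence $\bounce$ is preserved. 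This proves the splitting and completes your argument; it agrees with the $n=3$ data in Figure~\ref{fig:little-Schroder-3}.
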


\subsection{Further identities}
Recall the symmetric function identity 
\begin{equation}\label{eq:e-s-identity}
    e_{n-d}h_d = s_{(d+1,1^{n-d-1})} + s_{(d,1^{n-d})} \text{ for all } 0\leq d \leq n,
\end{equation}
where at the boundary cases, we use the convention that a Schur function indexed by something that is not a valid partition is $0$.
We easily deduce the following.
\begin{proposition}\label{prop:Schroder-splitting}
For all $0 \leq d \leq n$,
   \begin{equation} S_{n,d}(q,t) = \widetilde{S}_{n,d}(q,t)+\widetilde{S}_{n,d-1}(q,t). 
    \end{equation}
\end{proposition}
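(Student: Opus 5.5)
The plan is to pair both sides of the identity \eqref{eq:e-s-identity} with $\nabla e_n$ under the Hall inner product and then read off each resulting term using the two forms of the $(q,t)$-Schr\"oder theorem. Fix $0\le d\le n$. By Theorem~\ref{thm:qt-Schroeder-two-hooks}, $S_{n,d}(q,t)=\langle \nabla e_n, e_{n-d}h_d\rangle$. Substituting \eqref{eq:e-s-identity} and using bilinearity of the Hall inner product gives
\begin{equation*}
S_{n,d}(q,t)=\left\langle \nabla e_n, s_{(d+1,1^{n-d-1})}\right\rangle+\left\langle \nabla e_n, s_{(d,1^{n-d})}\right\rangle .
\end{equation*}

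In the interior range $1\le d\le n-1$, both shapes are genuine hooks. The first is $(d+1,1^{n-d-1})$, and the second, $(d,1^{n-d})$, is the hook $((d-1)+1,1^{n-(d-1)-1})$ with $0\le d-1\le n-2$. Theorem~\ref{thm:qt-Schroeder-one-hook} therefore applies to each term. It identifies the first term with $\widetilde{S}_{n,d}(q,t)$ and the second with $\widetilde{S}_{n,d-1}(q,t)$, which is the claimed identity.

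The boundary cases are the only point needing care, and they are handled by matching conventions.
\begin{itemize}
\item For $d=0$, the shape $(0,1^n)$ is not a valid partition, so that Schur function is $0$ by the convention stated after \eqref{eq:e-s-identity}. Correspondingly, $\widetilde{S}_{n,-1}=0$ by the convention adopted in this section. The first term is still covered by Theorem~\ref{thm:qt-Schroeder-one-hook} with $d=0$.
\item For $d=n$, the shape $(n+1,1^{-1})$ is invalid, so that term vanishes, and $\widetilde{S}_{n,n}=0$ by convention. The second shape $(n,1^0)=(n)$ is the hook with parameter $n-1$, so Theorem~\ref{thm:qt-Schroeder-one-hook} gives $\widetilde{S}_{n,n-1}(q,t)$.
\end{itemize}
Hence the identity holds for all $0\le d\le n$. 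No step here is a real obstacle; the only thing to check is that the zero conventions on both sides agree.
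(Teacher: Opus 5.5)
Your proposal is correct and matches the paper's proof: pair Equation~\eqref{eq:e-s-identity} with $\nabla e_n$, then apply Theorem~\ref{thm:qt-Schroeder-two-hooks} to the left side and Theorem~\ref{thm:qt-Schroeder-one-hook} to each hook term on the right. Your explicit check at $d=0$ and $d=n$, that the zero convention for invalid Schur functions agrees with $\widetilde{S}_{n,-1}=\widetilde{S}_{n,n}=0$, fills in a detail the paper leaves implicit.
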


\begin{proof}
    By Theorem~\ref{thm:qt-Schroeder-two-hooks}, Equation~\eqref{eq:e-s-identity}, and Theorem~\ref{thm:qt-Schroeder-one-hook}, we write
    \begin{align*}
        S_{n,d}(q,t) &= \Big\langle \nabla e_n, e_{n-d}h_d \Big\rangle\\
        &=  \left\langle \nabla e_n, s_{(d+1,1^{n-d-1})} \right\rangle +\left\langle \nabla e_n, s_{(d,1^{n-d})} \right\rangle\\
        &= \widetilde{S}_{n,d}(q,t)+\widetilde{S}_{n,d-1}(q,t). 
    \end{align*}
\end{proof}

Inverting this recursion writes each $\widetilde{S}_{n,d}(q,t)$ as an alternating sum of the $S_{n,d}(q,t)$.
\begin{proposition}\label{prop:Schroder-alternating}
For all $0 \leq d \leq n-1$,
    \begin{equation*}
        \widetilde{S}_{n,d}(q,t)
        = \sum_{i=0}^{d}(-1)^{i+d} S_{n,i}(q,t).
    \end{equation*}
\end{proposition}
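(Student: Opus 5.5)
The plan is to invert the recursion of Proposition~\ref{prop:Schroder-splitting} by a telescoping alternating sum. Fix $0 \leq d \leq n-1$. For each $0 \leq i \leq d$, Proposition~\ref{prop:Schroder-splitting} gives
\begin{equation*}
S_{n,i}(q,t) = \widetilde{S}_{n,i}(q,t) + \widetilde{S}_{n,i-1}(q,t),
\end{equation*}
and this is valid because $i \leq d \leq n-1 \leq n$. Multiplying by $(-1)^{i+d}$ and summing over $i=0,\dots,d$, the right-hand side becomes
\begin{equation*}
\sum_{i=0}^{d} (-1)^{i+d}\widetilde{S}_{n,i}(q,t) + \sum_{i=0}^{d}(-1)^{i+d}\widetilde{S}_{n,i-1}(q,t).
\end{equation*}

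Next, reindex the second sum by $j = i-1$. It becomes $-\sum_{j=-1}^{d-1}(-1)^{j+d}\widetilde{S}_{n,j}(q,t)$. The $j=-1$ term vanishes by the stated convention that $\widetilde{S}_{n,-1}(q,t)=0$. The terms with $0\le j\le d-1$ cancel against the corresponding terms of the first sum, so only the $i=d$ term of the first sum survives. That term is $(-1)^{2d}\widetilde{S}_{n,d}(q,t) = \widetilde{S}_{n,d}(q,t)$, which is the claimed identity.

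Alternatively, one can argue by induction on $d$. The base case $d=0$ is $S_{n,0} = \widetilde{S}_{n,0}$, again using $\widetilde{S}_{n,-1}=0$. The inductive step is $\widetilde{S}_{n,d} = S_{n,d} - \widetilde{S}_{n,d-1}$, combined with the inductive hypothesis. There is no real obstacle here. The only points needing care are the boundary convention at $d-1=-1$ and checking that each index $i$ used lies in the range $0\le i\le n$ where Proposition~\ref{prop:Schroder-splitting} applies.
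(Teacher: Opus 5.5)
Your proposal is correct and follows the paper's own proof: both substitute Proposition~\ref{prop:Schroder-splitting} into the alternating sum, shift the index in the second sum, and telescope using the convention $\widetilde{S}_{n,-1}(q,t)=0$. The only difference is cosmetic: you carry the sign $(-1)^{i+d}$ from the start, whereas the paper telescopes $\sum_{i=0}^{d}(-1)^i S_{n,i}(q,t)$ and multiplies by $(-1)^d$ at the end.
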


\begin{proof}
By Proposition~\ref{prop:Schroder-splitting}, $S_{n,i}(q,t) = \widetilde{S}_{n,i}(q,t)+\widetilde{S}_{n,i-1}(q,t)$, so we write
\begin{align*}
    \sum_{i=0}^d (-1)^i S_{n,i}(q,t)
    &= \sum_{i=0}^d (-1)^i \widetilde{S}_{n,i}(q,t) + \sum_{i=0}^d (-1)^i \widetilde{S}_{n,i-1}(q,t)\\
    &= \sum_{i=0}^d (-1)^i \widetilde{S}_{n,i}(q,t) + \sum_{i=0}^{d-1} (-1)^{i+1} \widetilde{S}_{n,i}(q,t)\\
    &= (-1)^d \widetilde{S}_{n,d}(q,t).
\end{align*}
Multiplying by $(-1)^d$ completes the proof.
\end{proof}

\subsection{The Schr\"oder polynomials}
Summing the polynomials $S_{n,d}(q,t)$ or $\widetilde{S}_{n,d}(q,t)$  over all $d$, with $a^d$ recording the grading by the number of diagonal steps, allows us to define the Schr\"oder polynomials.
\begin{definition}[{\cite[Chapter 4]{Haglund2008}}]
    The \newword{Schr\"oder polynomial} $S_n(q,t,a)$ and \newword{little Schr\"oder polynomial} $\widetilde{S}_n(q,t,a)$ are defined by
    \begin{equation}\label{eq:Schroder-polynomial-def}
    S_n(q,t,a) = \sum_{d=0}^n S_{n,d}(q,t) a^d \quad \text{ and } \quad
    \widetilde{S}_n(q,t,a) = \sum_{d=0}^{n-1} \widetilde{S}_{n,d}(q,t) a^d.
    \end{equation}
Equivalently, by Theorems~\ref{thm:qt-Schroeder-two-hooks} and~\ref{thm:qt-Schroeder-one-hook},
    \begin{equation}\label{eq:Schroder-polynomial-def-second}
    S_n(q,t,a) = \sum_{d=0}^n \Big\langle \nabla e_n, e_{n-d} h_d \Big\rangle a^d \quad \text{ and } \quad
    \widetilde{S}_n(q,t,a) =
    \sum_{d=0}^{n-1} \left\langle \nabla e_n, s_{(d+1,1^{n-d-1})}  \right\rangle a^d.
    \end{equation}
\end{definition}

Comparing the two definitions yields a clean factorization.
\begin{proposition}\label{prop:BigSmall-Schroder}
For $n \geq 1$,
    \begin{equation*}
S_n(q,t,a) = (1+a)\widetilde{S}_{n}(q,t,a).
    \end{equation*}
\end{proposition}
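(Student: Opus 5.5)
The plan is to compare coefficients of $a^d$ on both sides, using Proposition~\ref{prop:Schroder-splitting} as the only input. By the definition \eqref{eq:Schroder-polynomial-def}, the left side is $\sum_{d=0}^{n} S_{n,d}(q,t)a^d$. Expanding the right side gives
\begin{equation*}
(1+a)\widetilde{S}_n(q,t,a) = \sum_{d=0}^{n-1}\widetilde{S}_{n,d}(q,t)a^d + \sum_{d=1}^{n}\widetilde{S}_{n,d-1}(q,t)a^{d},
\end{equation*}
where the second sum comes from the first by the index shift $d\mapsto d+1$.

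We then use the convention that $\widetilde{S}_{n,d}(q,t)=0$ for $d<0$ or $d\geq n$. With it, both sums can be extended to run over $0\leq d\leq n$ without changing their value. The coefficient of $a^d$ on the right is therefore $\widetilde{S}_{n,d}(q,t)+\widetilde{S}_{n,d-1}(q,t)$ for every $0\le d\le n$. Proposition~\ref{prop:Schroder-splitting} says exactly that this equals $S_{n,d}(q,t)$ for all $0\le d\le n$. Hence the two polynomials in $a$ agree coefficient by coefficient, which proves the claim.

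The only point that needs care is the boundary terms $d=0$ and $d=n$, where one of the two $\widetilde{S}$ terms vanishes. These correspond to the conventions behind Equation~\eqref{eq:e-s-identity}: $s_{(0,1^n)}=0$ at $d=0$, and $s_{(n+1,1^{-1})}=0$ at $d=n$. Proposition~\ref{prop:Schroder-splitting} is stated for the full range $0\le d\le n$ with these conventions already built in, so no separate argument is needed. The hypothesis $n\geq 1$ guarantees that the hook $(d+1,1^{n-d-1})$ is a genuine partition for $0\le d\le n-1$, so $\widetilde{S}_n$ is well defined.
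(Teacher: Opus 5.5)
Your proof is correct and is essentially the paper's argument: the paper expands $S_n(q,t,a)$ via the pairings $\langle \nabla e_n, e_{n-d}h_d\rangle$ and the identity \eqref{eq:e-s-identity}, then reindexes the second sum. You instead cite Proposition~\ref{prop:Schroder-splitting}, which packages exactly that computation coefficientwise, including the boundary conventions at $d=0$ and $d=n$.
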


\begin{proof}
By equations~\eqref{eq:Schroder-polynomial-def-second} and~\eqref{eq:e-s-identity}, we write
    \begin{align*}
S_n(q,t,a) &= \sum_{d=0}^n \Big\langle \nabla e_n, e_{n-d} h_d \Big\rangle a^d
= \sum_{d=0}^{n-1} \left\langle \nabla e_n, s_{(d+1,1^{n-d-1})} \right\rangle a^d +
\sum_{d=1}^n \left\langle \nabla e_n, s_{(d,1^{n-d})}\right\rangle a^d
\\
&= \sum_{d=0}^{n-1} \left\langle \nabla e_n, s_{(d+1,1^{n-d-1})} \right\rangle a^d +
a \sum_{d=0}^{n-1} \left\langle \nabla e_n, s_{(d+1,1^{n-d-1})}\right\rangle a^{d}
\\
&= (1+a) \widetilde{S}_n(q,t,a).
    \end{align*}
\end{proof}

Directly from the definitions (the $d=0$ terms), we have that $S_n(q,t,0)=\widetilde{S}_n(q,t,0)=C_n(q,t)$ is the $(q,t)$-Catalan polynomial. 

\subsection{Path models and the zeta map}
Collecting the area and bounce statistics over all $d$ gives a single path-based combinatorial formula for the Schr\"oder polynomials $S_n(q,t,a)$ and $\widetilde{S}_n(q,t,a)$:
\begin{align*}
    S_n(q,t,a)
    = \sum_{d=0}^{n}a^d \sum_{\pi \in L_{n,n,d}^+} q^{\area(\pi)} t^{\bounce(\pi)} = \sum_{\pi \in L_{n,n}^+}q^{\area(\pi)} t^{\bounce(\pi)}a^{\diag(\pi)},
\end{align*}
and
\begin{align*}
    \widetilde{S}_n(q,t,a)
    = \sum_{d=0}^{n-1}a^d \sum_{\pi \in \tilde{L}_{n,n,d}^+} q^{\area(\pi)} t^{\bounce(\pi)} = \sum_{\pi \in \tilde{L}_{n,n}^+}q^{\area(\pi)} t^{\bounce(\pi)}a^{\diag(\pi)}.
\end{align*}
It follows immediately that $S_n(1,1,1)= S_n$ and $\widetilde{S}_n(1,1,1)= \widetilde{S}_n$.

There is a zeta map $\zeta$ on Schr\"oder paths, which is a bijection satisfying $\dinv(\pi) = \area(\zeta(\pi))$ and $\area(\pi) = \bounce(\zeta(\pi))$ (see \cite[Chapter 4]{Haglund2008} for the definition of the $\zeta$ map and the $\dinv$ statistic). 
It also preserves the number of diagonal steps.
Applying the zeta map gives another expression for $\widetilde{S}_n(q,t,a)$ in terms of the dinv and area statistics:
\begin{align*}
    \widetilde{S}_n(q,t,a)
        &= \sum_{\pi \in \tilde{L}_{n,n}^+}q^{\area(\pi)} t^{\bounce(\pi)}a^{\diag(\pi)}\\
            &= \sum_{\pi \in {\tilde{L}_{n,n}^+} }q^{\dinv(\zeta^{-1}(\pi))} t^{\area(\zeta^{-1}(\pi))}a^{\diag(\zeta^{-1}(\pi))}\\
    &= \sum_{\tau \in {\zeta^{-1}(\tilde{L}_{n,n}^+} )}q^{\dinv(\tau)} t^{\area(\tau)}a^{\diag(\tau)}.
\end{align*}
Note that $\zeta$ generally does not fix $\tilde{L}_{n,n,d}^+$ as a subset of ${L}_{n,n,d}^+$.

\section{Diagonal coinvariant rings}\label{sec:background}

For a fixed $n \geq 1$, we will denote a family of $n$ variables $\{ z_1,\ldots,z_n \}$ by $\bm{z}$:
\[
\bm{z} := \{ z_1,\ldots,z_n \}.
\]
Consider a family of bosonic (commutative) alphabets $\bm{x}^{(1)}, \dots, \bm{x}^{(k)}$ and fermionic (anticommutative) alphabets $\bm{\theta}^{(1)}, \dots, \bm{\theta}^{(\ell)}$
so that for any $\bullet,\star$ we have:
\begin{equation*}
\begin{aligned}
    &[x^{(\bullet)}_i,x^{(\star)}_j]=[x^{(\bullet)}_i,\theta^{(\star)}_j] =0, \\
    &\theta^{(\bullet)}_i\theta^{(\star)}_j + \theta^{(\star)}_j\theta^{(\bullet)}_i =0,  \qquad \left(\theta^{(\bullet)}_i\right)^2 =0.
\end{aligned}
\end{equation*}
Then the vector space
\[
\C[\bm{x}^{(1)}, \ldots, \bm{x}^{(k)}, \bm{\theta}^{(1)}, \ldots, \bm{\theta}^{(\ell)}] = \C[\bm{x}^{(1)}, \ldots, \bm{x}^{(k)}] \otimes \Lambda[\bm{\theta}^{(1)}, \ldots, \bm{\theta}^{(\ell)}]  \cong
   \Sym\left((\C^n)^{\oplus k}\right) \otimes
   {\textstyle\bigwedge}\left((\C^n)^{\oplus \ell}\right) 
\]
is an $\mathfrak{S}_n$-module via the natural \newword{diagonal action} obtained by permuting each family $\bm{x}^{(\bullet)}$ and $\bm{\theta}^{(\star)}$ simultaneously, where for all $i, \star, \bullet$, and $\tau \in \mathfrak{S}_n$: 
\[
\tau(x^{(\bullet)}_i) = x^{(\bullet)}_{\tau(i)} \quad \text{and} \quad \tau(\theta^{(\bullet)}_i) = 
\theta^{(\bullet)}_{\tau(i)}.
\]

\begin{definition}\label{def:bosonic-fermionic}
The \newword{$(k,\ell)$-bosonic-fermionic coinvariant ring} is defined as the quotient
\begin{equation}\label{eq:coinv} R_n^{(k,\ell)} := \C[\bm{x}^{(1)}, \ldots, \bm{x}^{(k)}, \bm{\theta}^{(1)}, \ldots, \bm{\theta}^{(\ell)}]/I_n^{(k,\ell)},\end{equation}
where the defining ideal
\[ I_n^{(k,\ell)}:=\left\langle \C[\bm{x}^{(1)}, \ldots, \bm{x}^{(k)}, \bm{\theta}^{(1)}, \ldots, \bm{\theta}^{(\ell)}]^{\mathfrak{S}_n}_+\right\rangle\]
is generated by the $\mathfrak{S}_n$-invariant polynomials without constant term under the diagonal action of $\mathfrak{S}_n$.
\end{definition}
In particular, the spaces $R_n^{(k,\ell)}$ inherit the natural polynomial grading and are themselves $(k+\ell)$-graded $\mathfrak{S}_n$-modules.
Consequently, $R_n^{(k,\ell)}$ decomposes as a direct sum of multihomogeneous components, which are themselves $\mathfrak{S}_n$-modules:
\begin{equation*}
R_n^{(k,\ell)} = \bigoplus_{\substack{r_1, \dots, r_k \geq 0\\ s_1,\dots,s_\ell \geq 0}} \left(R_n^{(k,\ell)}\right)_{r_1,\dots,r_k, s_1,\dots,s_\ell}
\end{equation*}
where for any $j$ and each $m$, we set
\[
\mathsf{deg}(x^{(m)}_j) = q_m \qquad \text{and} \qquad \mathsf{deg}(\theta^{(m)}_j) = a_m.
\]

\begin{definition}
The \newword{multigraded Hilbert series} is the sum of the graded dimensions, given by 
\begin{equation*}
\begin{aligned} 
\Hilb(R_n^{(k,\ell)}; q_1,\dots, q_k; a_1,\dots, a_\ell):= \sum_{\substack{r_1, \dots, r_k \geq 0\\ s_1,\dots,s_\ell \geq 0}}\dim \left(R_n^{(k,\ell)}\right)_{r_1,\dots,r_k, s_1,\dots,s_\ell}q_1^{r_1}\dots q_k^{r_k}a_1^{s_1}\dots a_\ell^{s_\ell},
\end{aligned}
\end{equation*}
and the \newword{multigraded Frobenius series} by 
\begin{equation*}
\begin{aligned} 
\Frob(R_n^{(k,\ell)}; q_1,\dots, q_k; a_1,\dots, a_\ell):= \sum_{\substack{r_1, \dots, r_k \geq 0\\ s_1,\dots,s_\ell \geq 0}} F\Char\left(R_n^{(k,\ell)}\right)_{r_1,\dots,r_k, s_1,\dots,s_\ell}q_1^{r_1}\dots q_k^{r_k}a_1^{s_1}\dots a_\ell^{s_\ell},
\end{aligned}
\end{equation*}
where $F$ denotes the Frobenius characteristic map and $\Char$ denotes the character.
\end{definition}

The Hilbert series can be recovered from the Frobenius series by pairing it with $e_1^n$:
\[
\Hilb(R_n^{(k,\ell)}; q_1,\dots, q_k; a_1,\dots, a_\ell)=\left\langle \Frob(R_n^{(k,\ell)}; q_1,\dots, q_k; a_1,\dots, a_\ell), e_1^n \right\rangle .\]
Similarly, the Hilbert series of the sign-isotypic component of $R_n^{(k,\ell)}$ can be recovered by pairing $\Frob(R_n^{(k,\ell)})$ with $e_n$ under the Hall inner product.

In this article we are particularly interested in two special cases of $R_n^{(k,\ell)}$, when $k=2$ and $\ell \in \{0,1\}$. We also utilize results on the classical and fermionic coinvariant rings $R_n^{(1,0)}$ and $R_n^{(0,1)}$. In these cases, we consistently denote by $\bm{x}$ and $\bm{y}$ the two bosonic alphabets and by $\bm{\theta}$ the single fermionic alphabet. 
Then we use the parameters 
\[q=q_1, \quad t=q_2, \quad \text{ and } \quad a=a_1\]
to denote the $\bm{x},\bm{y}$, and $\bm{\theta}$ degrees, respectively. 

\begin{definition} 
The \newword{diagonal coinvariant ring} $R_n^{(2,0)}$ and \newword{diagonal superspace coinvariant ring} $R_n^{(2,1)}$ are defined as the spaces
    \begin{align*} 
R_n^{(2,0)} := \C[\bm{x}, \bm{y}]/I_n^{(2,0)} \qquad \text{ and } \qquad
R_n^{(2,1)} := \C[\bm{x}, \bm{y},\bm{\theta}]/I_n^{(2,1)}.
\end{align*}
\end{definition}
In particular, $R_n^{(2,1)}$ admits the following description in terms of symmetric and exterior algebras:
\begin{equation*}
    R_n^{(2,1)} = \left(\left(\Sym\C^n\right)^{\otimes 2} \otimes \left(\bigwedge\C^n\right)\right)\Big/\left\langle \left(\left(\Sym\C^n\right)^{\otimes 2} \otimes \left(\bigwedge\C^n\right)\right)^{\mathfrak{S}_n}_+ \right\rangle.
\end{equation*}

\subsection{Characters for diagonal coinvariant spaces} 

The \newword{modified Macdonald polynomials} $\tilde{H}_\lambda$ originally arose in the work of Garsia and Haiman in their study of diagonal harmonics as a plethystic modification of the integral form Macdonald polynomials \cite{Macdonald, GarsiaHaiman1993, GarsiaHaiman1996}. 
In addition to arising as the Frobenius characters of the Garsia--Haiman modules, these polynomials form a Schur-positive basis for the ring of symmetric functions with coefficients in $\C(q,t)$ (see \cite{HaimanCDM}). 

For a partition $\mu$ (in French notation, so that parts are left-justified with the largest part on the bottom) and box $c \in \mu$ let:
\begin{align*}
\coarm_\mu(c)&:= \text{ the number of boxes strictly to the left of $c$ in $\mu$,}
\\
\coleg_\mu(c)&:= \text{ the number of boxes strictly below $c$ in $\mu$}.
\end{align*}
Define the following polynomials in $\Q[q,t]$: 
\begin{align*}
    B_\mu := \sum_{c \in \mu} q^{\coarm_\mu(c)} t^{\coleg_\mu(c)} \qquad \text{and} \qquad
    T_\mu := \prod_{c \in \mu} q^{\coarm_\mu(c)} t^{\coleg_\mu(c)},
\end{align*}
where the sum and product are taken over all boxes $c$ in the Young diagram of the partition $\mu$.

The \newword{nabla operator} $\nabla$ \cite{BergeronGarsia} is defined to be the following eigenoperator on the Macdonald basis: 
\begin{equation}\label{eq:nabla-defintion}
    \nabla \tilde{H}_\mu = T_\mu \tilde{H}_\mu \text{ for all $\mu$}.
\end{equation}

More generally, for any symmetric function $f$, the \newword{delta operators} $\Delta_f$ and $\Delta_f'$ \cite{HaglundRemmelWilson2018, BergeronGarsiaHaimanTesler} are the eigenoperators on the Macdonald basis defined by 
\begin{equation*}
    \Delta_f \tilde{H}_\mu = f[B_\mu]\tilde{H}_\mu \text{ and } \Delta_f' \tilde{H}_\mu = f[B_\mu-1]\tilde{H}_\mu \text{ for all $\mu$}.
\end{equation*}
Here, $f[\cdot]$ denotes plethystic substitution (see \cite{HaimanCDM}).
In particular, on symmetric functions that are homogeneous of degree $n$, the nabla operator is a special case of the delta operator: 
\[
\nabla = \Delta_{e_n}.
\]

In the groundbreaking paper \cite{Haiman2002}, Haiman used the geometry of the Hilbert scheme on $\C^2$ to compute the bigraded Frobenius series of $R_n^{(2,0)}$ in terms of the nabla operator.

\begin{theorem}[\cite{Haiman2002}]\label{thm:haiman}
    For $n \geq 1$,
    \begin{equation*}
        \Frob(R_n^{(2,0)};q,t) =  \nabla e_n.
    \end{equation*}
\end{theorem}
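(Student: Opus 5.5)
The plan is to follow Haiman's geometric route through the Hilbert scheme of points in the plane. Let $H_n = \Hilb^n(\C^2)$ and let $X_n \subseteq H_n \times \C^{2n}$ be the isospectral Hilbert scheme, i.e.\ the reduced fiber product of $H_n$ and $\C^{2n}$ over $S^n\C^2$. Let $\rho\colon X_n \to H_n$ be the projection, and set $P = \rho_*\mathcal{O}_{X_n}$, the Procesi bundle. The torus $(\C^*)^2$ acts compatibly on all of these, with weights recording the $q$ and $t$ gradings, and $\S_n$ acts on $P$.

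\emph{Step 1: $P$ is a vector bundle.} First I would prove that $X_n$ is normal, Cohen--Macaulay and Gorenstein. Then $\rho$ is flat and $P$ is a vector bundle of rank $n!$ whose fibers afford the regular representation of $\S_n$. At a torus fixed point $I_\mu$ (a monomial ideal), the fiber $P(I_\mu)$ is the Garsia--Haiman module of $\mu$. This step is exactly the $n!$ theorem, and it gives $\Frob(P(I_\mu);q,t)=\tilde{H}_\mu$.

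\emph{Step 2: realize $R_n^{(2,0)}$ geometrically.} Let $Z_n = \pi^{-1}(0) \subseteq H_n$ be the zero fiber of the Hilbert--Chow morphism $\pi$, and let $\mathcal{O}(1)$ be the ample line bundle $\bigwedge^n \mathcal{T}$, where $\mathcal{T}$ is the tautological bundle. I would show that
\[ R_n^{(2,0)} \otimes \sgn \;\cong\; H^0(Z_n, P \otimes \mathcal{O}(1)) \]
as doubly graded $\S_n$-modules. The argument identifies $\C[\bm{x},\bm{y}]$ with global sections over $X_n$ and shows that restriction to the zero fiber kills exactly the ideal $I_n^{(2,0)}$. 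This requires the \emph{polygraph theorem}: the coordinate ring of the polygraph is a free $\C[\bm{y}]$-module. Using that theorem, I would also prove the vanishing $H^i(Z_n, P\otimes \mathcal{O}(k)) = 0$ for $i>0$ and $k\ge 0$.

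\emph{Step 3: compute the character by localization.} Vanishing of higher cohomology means the graded character of $H^0$ equals the torus-equivariant Euler characteristic. I would compute that Euler characteristic by Atiyah--Bott localization over the fixed points $I_\mu$, $\mu \vdash n$. The contributions are as follows.
\begin{itemize}
\item The fiber of $P$ at $I_\mu$ contributes $\tilde{H}_\mu$.
\item The fiber of $\mathcal{O}(1)$ at $I_\mu$ contributes $T_\mu$.
\item The tangent weights at $I_\mu$, computed from arm/leg statistics, give the denominator $w_\mu$.
\item Restricting to $Z_n$ contributes the factor $\Pi_\mu B_\mu$-type terms.
\end{itemize}
Summing, the character becomes $\sum_\mu T_\mu \tilde{H}_\mu \cdot c_\mu(q,t)$, where $\sum_\mu c_\mu(q,t)\tilde{H}_\mu$ is the Garsia--Haiman expansion of $e_n$ (up to twisting by $\omega$ and $\sgn$). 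By the definition of $\nabla$ in \eqref{eq:nabla-defintion}, this sum is $\nabla e_n$. Finally I would untwist the sign character and keep track of the conventions relating $\omega$ to the $(q,t)$-grading.

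\emph{Main obstacle.} I expect Step 1 and the vanishing theorem in Step 2 to be the hard part: the $n!$ theorem (Cohen--Macaulayness of $X_n$) and the polygraph freeness, which go by a delicate induction on $n$ and on the number of points. Step 3 is comparatively routine symmetric-function bookkeeping.
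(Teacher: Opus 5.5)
The paper does not prove this statement; it cites it from Haiman. Your outline follows Haiman's route: the $n!$ theorem and the Procesi bundle, vanishing on the zero fiber, then localization. Step~1 is fine. However, the identification in Step~2 is false, and Step~3 reaches $\nabla e_n$ only because a second slip cancels the first.

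\textbf{The twist in Step~2.} Haiman's identification is $R_n^{(2,0)}\cong H^0(Z_n,P)$, with no twist by $\mathcal{O}(1)$ and no sign twist. Your own sketch produces exactly this: $H^0(H_n,P)=H^0(X_n,\mathcal{O}_{X_n})=\C[\bm{x},\bm{y}]$, and restriction to $Z_n$ kills $I_n^{(2,0)}$. Twisting by $\mathcal{O}(1)$ gives a different module. Since $P^{\S_n}=\mathcal{O}$ and $\Hom_{\S_n}(\epsilon,P)\cong\mathcal{O}(1)$, one gets
\[
H^0(H_n,P\otimes\mathcal{O}(1))\cong\epsilon\otimes J,\qquad H^0(Z_n,P\otimes\mathcal{O}(1))\cong\epsilon\otimes J/I_n^{(2,0)}J,
\]
where $J$ is the ideal generated by the alternants. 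This is Haiman's $m=2$ module, with Frobenius series $\nabla^2 e_n$ and dimension $(2n+1)^{n-1}$.

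The mismatch is visible on isotypic components. The sign-isotypic part of $H^0(Z_n,P\otimes\mathcal{O}(1))$ is $\epsilon\otimes H^0(Z_n,\mathcal{O}(2))$, while that of $R_n^{(2,0)}\otimes\sgn$ is $\epsilon\otimes(R_n^{(2,0)})^{\S_n}=\epsilon$. Concretely, for $n=2$ one has $Z_2\cong\mathbb{P}^1$ and $P|_{Z_2}\cong\mathcal{O}\oplus\mathcal{O}(1)\otimes\epsilon$. So $H^0(Z_2,P\otimes\mathcal{O}(1))$ has dimension $2+3=5$, whereas $\dim R_2^{(2,0)}=3=\dim H^0(Z_2,P)$. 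Only the invariant parts agree, via $(R_n^{(2,0)})_{\sgn}\cong\epsilon\otimes H^0(Z_n,\mathcal{O}(1))$ (the $(q,t)$-Catalan statement). That is presumably where the twist crept in.

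\textbf{The source of $T_\mu$ in Step~3.} The factor $T_\mu$ is not supplied by $\mathcal{O}(1)$; it is already in the tangent weights. Write $a(c),l(c)$ for arm and leg, and $\Pi_\mu=\prod_{c\neq(0,0)}(1-q^{\coarm_\mu(c)}t^{\coleg_\mu(c)})$. The weights at $I_\mu$ are $q^{1+a(c)}t^{-l(c)}$ and $q^{-a(c)}t^{1+l(c)}$ for $c\in\mu$. Since $\prod_{c\in\mu}q^{a(c)}t^{l(c)}=T_\mu$, we get
\[
\prod_{c\in\mu}\bigl(1-q^{1+a(c)}t^{-l(c)}\bigr)\bigl(1-q^{-a(c)}t^{1+l(c)}\bigr)=\frac{w_\mu}{T_\mu},\qquad w_\mu=\prod_{c\in\mu}\bigl(q^{a(c)}-t^{1+l(c)}\bigr)\bigl(t^{l(c)}-q^{1+a(c)}\bigr).
\]
Combined with the factor $(1-q)(1-t)B_\mu\Pi_\mu$ coming from $\mathcal{O}_{Z_n}$, localization gives
\[
\chi(Z_n,V)=\sum_\mu \frac{T_\mu(1-q)(1-t)B_\mu\Pi_\mu\,V(I_\mu)}{w_\mu}.
\]
Hence $\chi(Z_n,P)=\nabla e_n$ and $\chi(Z_n,P\otimes\mathcal{O}(k))=\nabla^{k+1}e_n$.

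Your bookkeeping (denominator $w_\mu$, with $T_\mu$ taken from $\mathcal{O}(1)$) produces the right sum for the wrong module. Your closing ``untwist by $\sgn$'' would then give $\Frob(R_n^{(2,0)})=\omega\nabla e_n$, which is not the theorem.

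\textbf{Repair.} Drop both twists in Step~2 and use $H^i(Z_n,P)=0$ for $i>0$, the $k=0$ case of the vanishing theorem. Then localize as above; no $\omega$ appears.
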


Although a similar identity for the diagonal superspace coinvariant ring is not known, Zabrocki made the following conjecture, which has attracted interest due to the role of $\Delta'_{e_{n-1-d}}(e_n)$ in the Delta theorem (conjectured in \cite{HaglundRemmelWilson2018}, and proven in \cite{DAdderioMellit} and later in \cite{BHMPS-Delta}).
\begin{conjecture}[\cite{Zabrocki2019}]\label{conj:zabrocki}
For $n \geq 1$,
    \begin{equation*}
        \Frob(R_n^{(2,1)}; q,t; a) = \sum_{d=0}^{n-1} a^d\Delta'_{e_{n-1-d}}(e_n).
    \end{equation*}
\end{conjecture}
In this paper, we will prove the sign-character component of Zabrocki's conjecture (see Section~\ref{subsec:zabrocki-conj}).

The ring $R_n^{(2,1)}$ may be realized as a $U(\mathfrak{gl}(2|1)) \otimes \C[\mathfrak{S}_n]$-module \cite{Lentfer-Supersymmetry}.
Certain irreducible $\mathfrak{gl}(2|1)$-characters are \emph{super Schur polynomials} $s_\lambda(q,t/a)$ (see \cite{BereleRegev} and \cite[Appendix A.2.2]{ChengWangBook}). 
Hence $\Frob(R_n^{(2,1)}; q,t; a)$ may be written as a positive integral sum of products of super Schur polynomials with Schur polynomials \cite{Lentfer-Supersymmetry}. 
In particular, by extracting the sign-isotypic component, we conclude that $\left \langle\Frob(R_n^{(2,1)}; q,t; a), s_{(1^n)}\right\rangle$ is a positive integral sum of super Schur polynomials $s_\lambda(q,t/a)$.

\subsection{The tensor product \texorpdfstring{$R_n^{(2,0)} \otimes R_n^{(0,1)}$}{Rn(2,0) otimes Rn(0,1)}} 
In this subsection we relate $R_n^{(2,1)}$ to $R_n^{(2,0)} \otimes R_n^{(0,1)}$. The sign-isotypic component of $R_n^{(2,0)} \otimes R_n^{(0,1)}$ is computed easily using Kronecker products in Corollary~\ref{cor:(2,0)(0,1)bound}. We also show there is a natural surjection $\rho: R_n^{(2,0)} \otimes R_n^{(0,1)} \twoheadrightarrow R_n^{(2,1)}$ in Lemma~\ref{lem:quotient}.
\medskip

Recall that 
\begin{equation*}
R_n^{(0,1)}= \left( \bigwedge\C^n\right)\Big/\left\langle \left( \bigwedge\C^n \right)^{\mathfrak{S}_n}_+ \right\rangle \cong \bigoplus_{i=0}^{n-1} \left(\bigwedge\nolimits^{i} S^{(n-1,1)} \right)
\cong \bigoplus_{i=0}^{n-1} S^{(n-i,1^i)},
\end{equation*}
where $S^\lambda$ denotes the Specht module indexed by partition $\lambda$, $S^{(n-1,1)}$ and $\epsilon=S^{(1^n)}$ are the reflection and sign representations of $\mathfrak{S}_n$, respectively, and 
\[\epsilon \otimes S^{(n-i,1^i)} = \epsilon \otimes \bigwedge\nolimits^{i} S^{(n-1,1)} \cong \bigwedge\nolimits^{n-1-i} S^{(n-1,1)} \cong 
S^{(i+1,1^{n-1-i})}.\]
Consequently, the Frobenius series of $R_n^{(0,1)}$ is a sum of Schur functions indexed by hook shapes (see {\cite[Lemma 4.10]{HaglundSergel}}):
\begin{equation*} \Frob(R_n^{(0,1)};a) =  \sum_{k=0}^{n-1} a^k s_{(n-k,1^k)}.\end{equation*}

As noted in \cite[Section 3.3]{GorskyMellit}, it follows from Schur's lemma that for any $\mathfrak{S}_n$-module $M$ the hook isotypic components of $M$ can 
be recovered from the sign-isotypic components of $M \otimes \bigoplus_{i=0}^{n-1} S^{(n-i,1^i)}$, as
\begin{equation}
\begin{aligned} \label{eqn:hook-sign-iso}
\Hom_{\S_n}\left(\bigoplus_{i=0}^{n-1} S^{(n-i,1^i)},M\right) 
&\cong
\Hom_{\S_n}\left( \bigoplus_{i=0}^{n-1}S^{(i+1,1^{n-1-i})},M\right) 
\cong
\Hom_{\S_n}\left( \epsilon \otimes \bigoplus_{i=0}^{n-1}S^{(n-i,1^i)},M\right) \\
&\cong\Hom_{\S_n}\left( \epsilon, M \otimes \bigoplus_{i=0}^{n-1} S^{(n-i,1^i)}\right)
\cong \Hom_{\S_n}\left( \epsilon, M \otimes R_n^{(0,1)}\right).
\end{aligned}
 \end{equation}

The following proposition is a graded, character-level version of Equation~\eqref{eqn:hook-sign-iso}, for an arbitrary multigraded $\mathfrak{S}_n$-module $M$.

\begin{proposition}\label{prop:HS-generalization}
Let $m,n \geq 1$ and let $M$ be an $m$-multigraded $\mathfrak{S}_n$-module. Then
\begin{equation*} 
\left\langle \Frob(M \otimes R_n^{(0,1)}; q_1,\ldots, q_m;a), e_n\right\rangle = \sum_{k=0}^{n-1} a^k\left\langle \Frob(M; q_1,\ldots, q_m), s_{(k+1,1^{n-k-1})} \right\rangle.
\end{equation*}
\end{proposition}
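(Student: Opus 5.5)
We argue at the level of Frobenius characteristics, one multidegree at a time. The multigraded Frobenius series of a tensor product of graded $\mathfrak{S}_n$-modules (with the diagonal action) is the graded Kronecker product of the individual series. Writing $M = \bigoplus_{\mathbf r} M_{\mathbf r}$ over multidegrees $\mathbf r$, we have
\begin{equation*}
\Frob(M\otimes R_n^{(0,1)};q_1,\ldots,q_m;a) = \sum_{\mathbf r}\sum_{k=0}^{n-1} q^{\mathbf r} a^k \, F\Char(M_{\mathbf r}) * s_{(n-k,1^k)},
\end{equation*}
where $*$ is the Kronecker product and we use the formula $\Frob(R_n^{(0,1)};a)=\sum_k a^k s_{(n-k,1^k)}$ recalled above. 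By linearity in each coefficient $q^{\mathbf r}a^k$, it suffices to show that for any finite-dimensional $\mathfrak{S}_n$-module $N$ and each $0\le k\le n-1$,
\begin{equation*}
\left\langle F\Char(N) * s_{(n-k,1^k)}, e_n\right\rangle = \left\langle F\Char(N), s_{(k+1,1^{n-k-1})}\right\rangle .
\end{equation*}

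To prove this identity we use two standard facts about the Kronecker product. First, it is adjoint-compatible with the Hall inner product: $\langle f*g, h\rangle = \langle f, g*h\rangle$. This holds because all three sides equal $\frac{1}{n!}\sum_{\sigma}\chi_f(\sigma)\chi_g(\sigma)\chi_h(\sigma)$, and characters of $\mathfrak{S}_n$ are real. Second, tensoring with the sign representation is $\omega$, so $s_\lambda * e_n = s_{\lambda'}$. Applying both facts, the left side becomes $\langle F\Char(N), s_{(n-k,1^k)} * e_n\rangle = \langle F\Char(N), s_{(n-k,1^k)'}\rangle$. The conjugate of the hook is $(n-k,1^k)' = (k+1,1^{n-k-1})$, which gives the claim.

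This is just the character-level form of the Hom chain in Equation~\eqref{eqn:hook-sign-iso}. Each $\langle\cdot, s_\lambda\rangle$ computes $\dim\Hom_{\mathfrak{S}_n}(S^\lambda,\cdot)$, and the self-duality of $S^\lambda$ lets us move the hook factor across. One could also argue module-theoretically: since $S^{(n-k,1^k)}$ is self-dual, $\Hom(\epsilon, N\otimes S^{(n-k,1^k)}) \cong \Hom(\epsilon\otimes S^{(n-k,1^k)}, N)$, and $\epsilon\otimes S^{(n-k,1^k)}\cong S^{(k+1,1^{n-k-1})}$. Taking dimensions degree by degree then gives the same identity.

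The only point needing care is the bookkeeping of gradings. Tensoring with the $a$-graded module $R_n^{(0,1)}$ multiplies the grading variables, and the $\mathfrak{S}_n$-action is diagonal, so the multidegree-$(\mathbf r,k)$ piece of $M\otimes R_n^{(0,1)}$ is exactly $M_{\mathbf r}\otimes S^{(n-k,1^k)}$. This is immediate, but it is what justifies working term by term. If $M$ is infinite-dimensional, we assume each graded piece is finite-dimensional and work coefficientwise, so no convergence issue arises.
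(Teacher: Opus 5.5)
Your proof is correct and follows essentially the same route as the paper. Both compute the sign multiplicity of the graded Kronecker product with $\Frob(R_n^{(0,1)};a)=\sum_k a^k s_{(n-k,1^k)}$ and then conjugate the hook; your use of $\langle f*g,h\rangle=\langle f,g*h\rangle$ together with $s_\lambda * e_n = s_{\lambda'}$ is a repackaging of the paper's key fact $g(\lambda,\mu,1^n)=\delta_{\mu,\lambda'}$.
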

Before we give a proof, we record the important special case where $M = R_n^{(m,0)}$. 
The final equality at $m=2$ follows from Theorem~\ref{thm:haiman} and Equation~\eqref{eq:Schroder-polynomial-def-second}; this case appears in \cite[proof of Theorem 4.11]{HaglundSergel}, and our proof of Proposition~\ref{prop:HS-generalization} follows a similar strategy. 

\begin{corollary}\label{cor:(2,0)(0,1)bound} For any $m,n \geq 1$,
\begin{equation*} 
\left\langle \Frob(R_n^{(m,0)} \otimes R_n^{(0,1)}; q_1,\ldots,q_m;a), e_n\right\rangle = \sum_{k=0}^{n-1} a^k\left\langle \Frob(R_n^{(m,0)}; q_1,\ldots,q_m), s_{(k+1,1^{n-k-1})} \right\rangle.
\end{equation*}
In particular, for $m=2$,
\begin{equation*} 
\left\langle \Frob(R_n^{(2,0)} \otimes R_n^{(0,1)}; q,t;a), e_n\right\rangle = \sum_{k=0}^{n-1} a^k\left\langle \nabla e_n, s_{(k+1,1^{n-k-1})} \right\rangle = \widetilde{S}_n(q,t,a).
\end{equation*}
\end{corollary}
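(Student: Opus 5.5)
The corollary is the special case $M = R_n^{(m,0)}$ of Proposition~\ref{prop:HS-generalization}, so the plan is to apply that proposition and then specialize to $m=2$. Since the proposition is stated before the corollary, I take it as given. If it must be justified independently, I would do so as follows.

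Write $\Frob(M) = \sum_\lambda c_\lambda(q_1,\dots,q_m) s_\lambda$, where $c_\lambda = \langle \Frob(M), s_\lambda\rangle$. The Frobenius series of $M \otimes R_n^{(0,1)}$ under the diagonal (internal tensor) action is the Kronecker product
\[
\Frob(M)\ast \sum_{i=0}^{n-1} a^i s_{(n-i,1^i)}.
\]
Pairing with $e_n = s_{(1^n)}$ uses the standard identity $\langle f \ast g, s_{(1^n)}\rangle = \langle f, \omega g\rangle$. This identity follows from $s_\lambda \ast s_{(1^n)} = s_{\lambda'}$ together with the symmetry of the Kronecker coefficients, and it is the graded form of Equation~\eqref{eqn:hook-sign-iso}. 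Since $\omega s_{(n-i,1^i)} = s_{(i+1,1^{n-1-i})}$, the pairing equals
\[
\sum_{i=0}^{n-1} a^i \left\langle \Frob(M), s_{(i+1,1^{n-i-1})}\right\rangle.
\]
Each multigraded piece is a finite-dimensional $\S_n$-module and the computation is linear in each graded component, so the gradings pass through coefficientwise. This gives the first displayed equality with $M = R_n^{(m,0)}$, which is an $m$-multigraded $\S_n$-module.

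For $m=2$, I would substitute $\Frob(R_n^{(2,0)};q,t) = \nabla e_n$ from Theorem~\ref{thm:haiman}, obtaining $\sum_{k} a^k \langle \nabla e_n, s_{(k+1,1^{n-k-1})}\rangle$. By the second formula in Equation~\eqref{eq:Schroder-polynomial-def-second}, which comes from Theorem~\ref{thm:qt-Schroeder-one-hook}, this sum is exactly $\widetilde{S}_n(q,t,a)$.

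None of the steps is a real obstacle. The only point needing care is the Kronecker-product identity for the sign pairing together with the $\omega$-twist of the hooks: one must check that the index shift $k \leftrightarrow n-1-k$ matches the $a$-grading, so that degree $a^k$ pairs with $s_{(k+1,1^{n-k-1})}$ and not with its conjugate.
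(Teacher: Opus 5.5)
Your proposal is correct and matches the paper's argument. The paper also derives the corollary by taking $M=R_n^{(m,0)}$ in Proposition~\ref{prop:HS-generalization}, and then, for $m=2$, invokes Theorem~\ref{thm:haiman} and Equation~\eqref{eq:Schroder-polynomial-def-second}; your optional justification of the proposition via $\langle f\ast g, s_{(1^n)}\rangle=\langle f,\omega g\rangle$ is the same computation as the paper's use of $g(\lambda,\mu,1^n)=\delta_{\mu,\lambda'}$.
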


\begin{proof}[Proof of Proposition~\ref{prop:HS-generalization}]
    As noted in \cite[Equation (4.9)]{HaglundSergel}, following from \cite{Bessenrodt}, for any $\mathfrak{S}_n$-modules $M$ and $N$ tensored under the diagonal action of $\mathfrak{S}_n$, we have that
    \begin{equation*} \Frob(M \otimes N) = \sum_{\nu \vdash n} s_\nu \sum_{\lambda, \mu \vdash n} \left\langle \Frob(M), s_\lambda \right\rangle \left\langle \Frob(N), s_\mu \right\rangle \left\langle s_\lambda * s_\mu, s_\nu \right\rangle,
    \end{equation*}
    where $\left\langle s_\lambda * s_\mu, s_\nu \right\rangle = g(\lambda, \mu, \nu)$ are the Kronecker coefficients. 
    In the present case, by keeping track of the gradings,
    \begin{equation*}
    \begin{aligned} 
    &\left\langle \Frob(M \otimes R_n^{(0,1)}; q_1,\ldots,q_m;a), e_n\right\rangle\\ 
    &\qquad=  \sum_{\lambda, \mu \vdash n} \Big\langle \Frob(M;q_1,\ldots,q_m), s_\lambda \Big\rangle \left\langle \Frob(R_n^{(0,1)};a), s_\mu \right\rangle g(\lambda, \mu, 1^n)  \\
    &\qquad=  \sum_{\lambda \vdash n} \Big\langle \Frob(M;q_1,\ldots,q_m), s_{\lambda'} \Big\rangle \left\langle \Frob(R_n^{(0,1)};a), s_{\lambda} \right\rangle,
    \end{aligned}
    \end{equation*}
    since $g(\lambda, \mu, 1^n) = \delta_{\mu, \lambda'}$.  
    Only Schur functions indexed by hooks appear in $\Frob(R_n^{(0,1)};a)$, so we reduce to
    \begin{equation*}
    \begin{aligned} \left\langle \Frob(M \otimes R_n^{(0,1)}; q_1,\ldots,q_m;a),e_n\right\rangle 
    &=  \sum_{k=0}^{n-1} a^k \left\langle \Frob(M;q_1,\ldots,q_m), s_{(k+1,1^{n-k-1})} \right\rangle. 
    \end{aligned}
    \end{equation*}
\end{proof}

\begin{lemma}\label{lem:quotient}
The natural surjection $\rho: R_n^{(2,0)} \otimes R_n^{(0,1)} \twoheadrightarrow R_n^{(2,1)}$ is a map of triply-graded $\S_n$-modules.
In particular, $R_n^{(2,1)}$ is a quotient $\mathfrak{S}_n$-module of $R_n^{(2,0)} \otimes R_n^{(0,1)}$.
\end{lemma}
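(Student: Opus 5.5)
The plan is to build $\rho$ from the multiplication map on the full polynomial rings and then check that it descends to the quotients. Write $A=\C[\bm{x},\bm{y}]$ and $B=\Lambda[\bm{\theta}]$. Then $\C[\bm{x},\bm{y},\bm{\theta}]=A\otimes B$, with multiplication $(f\otimes g)(f'\otimes g')=ff'\otimes gg'$; this is well defined and involves no signs because the $\bm{\theta}$'s commute with the $\bm{x}$'s and $\bm{y}$'s. The identity map $A\otimes B\to A\otimes B$ is $\S_n$-equivariant for the diagonal action, since $\tau$ acts on $A\otimes B$ as $\tau\otimes\tau$. It also preserves the three gradings, because the $q$-, $t$- and $a$-degrees of $f\otimes g$ are read off from $f$ and $g$ separately.

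Next, show that the kernel of $A\otimes B\to R_n^{(2,0)}\otimes R_n^{(0,1)}$ lies in $I_n^{(2,1)}$. That kernel equals $I_n^{(2,0)}\otimes B+A\otimes I_n^{(0,1)}$, the standard description of the kernel of a tensor product of two quotient maps over a field. For the first summand, take a generator $fh\otimes g$ with $f\in A^{\S_n}_+$ and $h\in A$. In $A\otimes B$ this is the product $(f\otimes 1)\cdot(h\otimes g)$. Here $f\otimes 1$ is $\S_n$-invariant of positive degree, so it lies in $I_n^{(2,1)}$, and hence so does the product. The second summand is handled symmetrically: a generator $h\otimes eg$ with $e\in B^{\S_n}_+$ equals $(1\otimes e)(h\otimes g)$, and $1\otimes e$ is a positive-degree invariant. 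Consequently the identity map of $A\otimes B$ descends to a well-defined map
\[
\rho: R_n^{(2,0)}\otimes R_n^{(0,1)} \longrightarrow R_n^{(2,1)}.
\]

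This $\rho$ is surjective because it is induced by the surjection $A\otimes B\twoheadrightarrow R_n^{(2,1)}$. It is $\S_n$-equivariant and triply graded because all the ideals involved are generated by homogeneous invariant elements, so they are graded $\S_n$-stable subspaces and the induced gradings and actions agree. The ``in particular'' statement then follows: $R_n^{(2,1)}$ is isomorphic to $(R_n^{(2,0)}\otimes R_n^{(0,1)})/\ker\rho$, and $\ker\rho$ is an $\S_n$-submodule. For completeness one may note that $\ker\rho$ is the image of $I_n^{(2,1)}$, which is typically strictly larger because of mixed invariants such as $\sum_i x_i\theta_i$.

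The only delicate point is the kernel description. Over a field, $A/I\otimes B/J\cong (A\otimes B)/(I\otimes B+A\otimes J)$ by right-exactness of the tensor product. I will cite this fact rather than reprove it.
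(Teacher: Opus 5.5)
Your proposal is correct and follows essentially the same route as the paper. Both arguments identify the kernel of $\C[\bm{x},\bm{y},\bm{\theta}]\twoheadrightarrow R_n^{(2,0)}\otimes R_n^{(0,1)}$ as the sum of the extended ideals, note that this sum lies in $I_n^{(2,1)}$, and factor the projection onto $R_n^{(2,1)}$ through it; you simply spell out the generator check, equivariance, and grading compatibility that the paper leaves implicit.
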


\begin{proof}
    $I_n^{(2,1)}$ contains both the ideals generated by $I_n^{(2,0)}$ and $I_n^{(0,1)}$. 
    Their sum is the kernel of the natural surjection $\C[\bm x,\bm y,\bm\theta] \twoheadrightarrow R_n^{(2,0)}\otimes R_n^{(0,1)}$, so the projection $\C[\bm x,\bm y,\bm\theta]\twoheadrightarrow R_n^{(2,1)}$ factors through $R_n^{(2,0)}\otimes R_n^{(0,1)}$.
\end{proof}

It is important to note that $R_n^{(2,1)}$ is a proper quotient module of $R_n^{(2,0)} \otimes R_n^{(0,1)}$. 
In fact, comparing the computations of Zabrocki~\cite{Zabrocki2019}, which verify Conjecture~\ref{conj:zabrocki} for $n \leq 6$, with the Frobenius series of $R_n^{(2,0)} \otimes R_n^{(0,1)}$ computed via the Kronecker product formula in the proof of
Proposition~\ref{prop:HS-generalization}, one checks that for $2 \leq n \leq 6$, the sign character is the only irreducible character that has the same multiplicity in both $R_n^{(2,1)}$ and in $R_n^{(2,0)} \otimes R_n^{(0,1)}$.
Later in Section~\ref{subsec:main theorem} we will prove the sign-isotypic components are in fact always isomorphic. 

\begin{example}\label{ex:sign-tensor}
At $n=3$, one has 
\begin{align*}
    \Frob(R_3^{(2,1)}; q,t;a) = {}&\big(q^3+q^2t+qt^2+t^3+qt+ a(q^2+qt+t^2+q+t) +a^2\big)s_{(1,1,1)}\\ 
    &+ \big(q^2+qt+t^2+q+t + a(q+t+1)\big)s_{(2,1)}\\ 
    &+ s_{(3)},
\end{align*}
whereas
\begin{align*}
    \Frob&(R_3^{(2,0)} \otimes R_3^{(0,1)}; q,t;a) \\
    = {}&\big(q^3+q^2t+qt^2+t^3+qt+ a(q^2+qt+t^2+q+t) +a^2\big)s_{(1,1,1)}\\
    & + \big(q^2+qt+t^2+q+t +  a(q^3+q^2t+qt^2+t^3+q^2+2qt+t^2+q+t+1)\\ 
    & \quad{}+ a^2(q^2+qt+t^2+q+t)\big)\,s_{(2,1)}\\
    &+ \big(1 + a(q^2+qt+t^2+q+t) + a^2 (q^3+q^2t+qt^2+t^3+qt) \big) s_{(3)}.
\end{align*}
Note that the only character at which they coincide is the sign character. 
In this example, the sign character has multiplicity 11, which equals the number of little Schr\"oder paths $\widetilde{S}_3$ seen in Figure~\ref{fig:little-Schroder-3}.
\end{example}

\section{Relating hook and sign components via harmonic spaces}\label{sec:main}

In this section we present our main results: Theorem~\ref{thm:main}, which proves the sign-isotypic components of $R_n^{(2,1)}$ and $R_n^{(2,0)} \otimes R_n^{(0,1)}$ are isomorphic, and Corollary \ref{cor:sign-character-schroder}, which shows the triply-graded multiplicity of this sign character is the little Schr\"oder polynomial $\widetilde{S}_n(q,t,a)$.

To do so, we pass to the isomorphic space of harmonics $H_n^{(2,1)} \cong R_n^{(2,1)}$, and construct, for each $d$, a family of linearly independent alternating harmonics of $\bm{\theta}$-degree $d$. The construction has three steps.

First, we start with certain harmonics in $H_n^{(2,0)}$ that are symmetric in some variables and antisymmetric in the remainder. Second, we multiply each harmonic by a product of fermionic variables, thus increasing the $\bm{\theta}$-degree,
and then antisymmetrize the result to ensure it lies in the sign-isotypic component of $H_n^{(2,1)}$.
Lastly, we check that the resulting elements are harmonic and linearly independent. 

This construction then enables us to define an injective, grading-preserving map from the sign-isotypic component of $R_n^{(2,0)} \otimes R_n^{(0,1)}$ into that of $R_n^{(2,1)}$, which combined with Lemma~\ref{lem:quotient} proves the desired isomorphism.
\medskip

We begin by recalling the essential definitions on the harmonic spaces.
For any $r,s,\varepsilon \in \Z_{\geq 0}$ define
\begin{equation*}
    D_{r,s,\varepsilon} = \sum_{i=1}^n \partial_{x_i}^r \partial_{y_i}^s \partial_{\theta_i}^\varepsilon.
\end{equation*}
Here, we take the partial derivative $\partial_{\theta_i}$ on a monomial in $\theta$'s by first commuting the $\theta_i$ to the leftmost position, collecting signs along the way, and then differentiating as usual via the Leibniz rule.
Observe that $\partial_{\theta_i}^\varepsilon = 0$ whenever $\varepsilon \geq 2$.

\begin{definition} We define three important spaces:

\noindent
(1) The \newword{space of diagonal harmonics} is defined by
\begin{equation}\label{eq:diag-coinv}
    H_n^{(2,0)} = \left\{ f \in \C[\bm{x}, \bm{y}]  : D_{r,s,0}(f) = 0 \text{ for all } r,s \in \mathbb{Z}_{\geq 0}, r+s \geq 1 \right\}.
\end{equation}
(2) The \newword{space of diagonal super harmonics} is defined by
\begin{equation}\label{eq:diag-super-coinv}
    H_n^{(2,1)} = \left\{ f \in \C[\bm{x}, \bm{y}, \bm{\theta}]  : D_{r,s,\varepsilon}(f) = 0 \text{ for all } r,s,\varepsilon \in \mathbb{Z}_{\geq 0}, r+s+\varepsilon \geq 1 \right\}.
\end{equation}
(3) The \newword{space of fermionic harmonics} is defined by
\begin{equation*}
    H_n^{(0,1)} = \left\{ f \in \C[\bm{\theta}]  : D_{0,0,1}(f) = 0\right\}.
\end{equation*}
\end{definition}

\begin{remark}
As noted in the introduction, we will not use Haiman's operator theorem \cite{Haiman1994,Haiman2002}. 
Instead, the elements of $H_n^{(2,0)}$ we need are constructed explicitly.
\end{remark}

We will also use the following result. 
The case of $R_n^{(2,0)}$ is due to \cite{Haiman1994}. 
The case of $R_n^{(2,1)}$ is very similar to that of $R_n^{(1,1)}$ in \cite{SwansonWallach1} (which covers the case of $R_n^{(0,1)}$), and was proven in \cite[Theorem A.4]{JiangLentfer} in full generality.

\begin{proposition}\label{prop:H-R-isomorphism}
\noindent
(1) The projection $\C[\bm{x},\bm{y}] \to R_n^{(2,0)}$ restricts to an isomorphism of bigraded $\mathfrak{S}_n$-modules:
    \begin{equation*}
    \eta_{(2,0)}:H_n^{(2,0)} \xrightarrow{\ \sim\ } R_n^{(2,0)}, \quad \text{ given by } \quad h \mapsto h + I_n^{(2,0)}.
\end{equation*}
(2) The projection $\C[\bm{x},\bm{y},\bm{\theta}] \to R_n^{(2,1)}$ restricts to an isomorphism of triply-graded $\mathfrak{S}_n$-modules:
    \begin{equation*}
   \eta_{(2,1)}: H_n^{(2,1)} \xrightarrow{\ \sim\ } R_n^{(2,1)}, \quad \text{ given by } \quad h \mapsto h + I_n^{(2,1)}.
\end{equation*}
(3) The projection $\C[\bm{\theta}] \to R_n^{(0,1)}$ restricts to an isomorphism of graded $\mathfrak{S}_n$-modules:
    \begin{equation*}
   \eta_{(0,1)}: H_n^{(0,1)} \xrightarrow{\ \sim\ } R_n^{(0,1)}, \quad \text{ given by } \quad h \mapsto h + I_n^{(0,1)}.
\end{equation*}
\end{proposition}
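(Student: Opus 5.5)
The approach is the standard harmonic-space argument via an apolar (Fischer) inner product; I treat the three parts uniformly, since (1) and (3) are the special cases of (2) in which the relevant alphabets are absent. Let $A=\C[\bm x,\bm y,\bm\theta]$ and equip it with the sesquilinear pairing $\langle f,g\rangle = \big(\overline{f}(\partial_{\bm x},\partial_{\bm y},\partial_{\bm\theta})\, g\big)\big|_{\bm x=\bm y=\bm\theta=0}$. For $\bm\theta$ this requires a fixed sign convention: order the $\theta$'s in each monomial increasingly and differentiate using the left-derivative convention stated above. First I check two facts:
\begin{itemize}
\item Distinct multigraded monomials are orthogonal, and each monomial has positive norm (a product of factorials). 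Hence the pairing is positive definite and respects the triple grading.
\item Since $\S_n$ permutes variables and derivatives compatibly, the pairing is $\S_n$-invariant.
\end{itemize}

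The key step is to identify $H_n^{(2,1)}$ with the orthogonal complement $(I_n^{(2,1)})^\perp$. By construction, multiplication by $\overline{p}$ is adjoint to the operator $p(\partial)$; in the odd variables this holds up to signs that do not affect vanishing. Hence $g\perp I_n^{(2,1)}$ if and only if $p(\partial)g=0$ for every positive-degree invariant $p$. Taking $p$ to be the power-sum-type invariants $\sum_i x_i^r y_i^s \theta_i^\varepsilon$ gives exactly the operators $D_{r,s,\varepsilon}$, so $(I_n^{(2,1)})^\perp \subseteq H_n^{(2,1)}$.

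The converse inclusion is the main obstacle. It requires the polarized power sums $\sum_i x_i^r y_i^s\theta_i^\varepsilon$ to generate the invariant ring $A^{\S_n}$ as an algebra, or at least to generate the ideal $I_n^{(2,1)}$. For this I would invoke the superspace analogue of Weyl's polarization theorem, in the form used for $R_n^{(1,1)}$ in \cite{SwansonWallach1} and in full generality in \cite[Theorem A.4]{JiangLentfer}. The mechanism behind it is as follows. Invariants are spanned by symmetrizations of monomials, i.e.\ monomial symmetric functions in the ``multi-letters'' $(x_i,y_i,\theta_i)$. Over $\C$, Newton-type identities then express these in terms of power sums of the letters. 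The odd letters need extra care with signs, but since each $\theta_i^2=0$, the relevant identities still hold. Granting this, $H_n^{(2,1)}=(I_n^{(2,1)})^\perp$ as graded subspaces.

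Finally, because the pairing is positive definite on each finite-dimensional multigraded piece, we get $A=I_n^{(2,1)}\oplus (I_n^{(2,1)})^\perp$ degree by degree. Therefore the restriction of the projection $A\to R_n^{(2,1)}$ to $H_n^{(2,1)}$ is a linear isomorphism. It preserves the triple grading because both summands are graded, and it is $\S_n$-equivariant because both $I_n^{(2,1)}$ and its complement are $\S_n$-stable (by invariance of the pairing) and the projection commutes with the action. Parts (1) and (3) follow from the same argument with the $\bm\theta$ alphabet, respectively the $\bm x,\bm y$ alphabets, removed. In case (3) only $D_{0,0,1}$ is needed, because the fermionic invariant ring $\Lambda[\bm\theta]^{\S_n}$ is generated by $\sum_i\theta_i$ alone.
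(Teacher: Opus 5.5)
Your argument is correct and is essentially the standard apolar-form argument behind the references the paper cites in place of a proof (Haiman for (1), Swanson--Wallach and Jiang--Lentfer for (2) and (3)). The generation input you need is exactly Proposition~\ref{prop:ideal-generators} (Weyl's polarization theorem and its super version due to Orellana--Zabrocki), so cite that rather than Jiang--Lentfer's Theorem~A.4, which the paper uses for the isomorphism itself.

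One small correction concerns the claim that the form is positive definite. With the substitution $\theta_j\mapsto\partial_{\theta_j}$ and the left-derivative convention, a $\bm\theta$-monomial of degree $k$ pairs with itself to $(-1)^{\binom{k}{2}}$ times a product of factorials, so the form is not literally positive definite. Since this sign depends only on the $\bm\theta$-degree, the form is still definite on each multigraded piece, which is all the splitting $I_n^{(2,1)}\oplus(I_n^{(2,1)})^\perp$ requires.
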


Let $[n]:=\{1,\ldots, n\}$. For $B \subseteq [n]$ any nonempty subset, $r,s \in \Z_{\geq 0}$, and $\varepsilon \in \{0,1\}$, define the \newword{(super) polarized power-sum} symmetric function $p_{r,s,\varepsilon}(\bm{x}_B,\bm{y}_B, \bm\theta_B)$ as the sum:
\[
p_{r,s,\varepsilon}(\bm{x}_B,\bm{y}_B, \bm\theta_B) := \sum_{\ell \in B} x_\ell^r y_\ell^s \theta_\ell^{\varepsilon}. 
\]
In the special cases when $B = [n]$ or $\varepsilon =0$, we simplify notation and write
\[
p_{r,s,\varepsilon}(\bm{x},\bm{y},\bm{\theta})=\sum_{i=1}^n x_i^r y_i^s \theta_i^\varepsilon
\qquad \text{and} \qquad
p_{r,s}(\bm{x}_B,\bm{y}_B):=
p_{r,s,0}(\bm{x}_B,\bm{y}_B,\bm{\theta}_B).\]

For $B \subseteq [n]$, denote its Young subgroup in $\S_n$ by $\mathfrak{S}_B$. The \newword{antisymmetrizer} and \newword{symmetrizer} of $\mathfrak{S}_B$, respectively, are defined by
\[
     \cY_B= \frac{1}{|B|!}\sum_{\sigma \in \mathfrak{S}_B} \sgn(\sigma) \sigma
     \qquad \text{and} \qquad \cC_B= \frac{1}{|B|!}\sum_{\sigma \in \mathfrak{S}_B} \sigma. 
\]

We say an element $f$ in an $\S_n$-module $M$ is \newword{$\S_B$-antisymmetric} or \newword{$\S_B$-symmetric}, respectively, if $\sigma(f)=\sgn(\sigma)f$ or $\sigma(f) = f$ for all $\sigma \in \S_B$, or equivalently if
\[
\cY_B(f) = f \qquad \text{or} \qquad \cC_B(f) =f.
\]

Let $\overline{B}= [n]\setminus B$. For any $\S_n$-module $M$, the \newword{$\S_B$-antisymmetric, $\S_{\overline{B}}$-symmetric subspace of $M$} is the vector space:
\[
M_B:=\cY_B \cC_{\overline{B}} (M) = \cC_{\overline{B}} \cY_B(M) = \{ f \in M :  \cY_{B} (f)=f \text{ and } \cC_{\overline{B}}(f)=f \}.
\]
Note that since $\cY_{B}$ and $\cC_{\overline{B}}$ are idempotents, then $\cY_B \cC_{\overline{B}} (M)= \cY_{B}(M) \cap \cC_{\overline{B}}(M)$. 
In general, $\cY_B \cC_{\overline{B}} (M)$ is not an $\mathfrak{S}_n$-submodule of $M$.

We record some helpful lemmas.

\begin{lemma}\label{lem:Y_to_C}
Let $M$ be a $\C$-algebra on which $\S_n$ acts by algebra automorphisms. 
    For $f,g \in M$ and $B \subseteq [n]$, if $f$ is $\S_B$-antisymmetric, then
    \[
    \cY_B(fg) = f \; \cC_B(g) \qquad \text{and} \qquad \cC_B(fg) = f \; \cY_B(g) .
    \]
\end{lemma}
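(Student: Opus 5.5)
Both identities come from expanding the antisymmetrizer and symmetrizer, then reindexing the sum over $\S_B$ using that $f$ is $\S_B$-antisymmetric, i.e.\ $\sigma(f)=\sgn(\sigma)f$ for every $\sigma\in\S_B$. The only structural input is that $\S_n$ acts by algebra automorphisms, so that $\sigma(fg)=\sigma(f)\,\sigma(g)$.

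For the first identity, expand the definition of $\cY_B$:
\[
\cY_B(fg)=\frac{1}{|B|!}\sum_{\sigma\in\S_B}\sgn(\sigma)\,\sigma(f)\,\sigma(g)=\frac{1}{|B|!}\sum_{\sigma\in\S_B}\sgn(\sigma)^2\, f\,\sigma(g).
\]
Since $\sgn(\sigma)^2=1$, this equals $f\,\cC_B(g)$. The second identity is proved the same way:
\[
\cC_B(fg)=\frac{1}{|B|!}\sum_{\sigma\in\S_B}\sgn(\sigma)\, f\,\sigma(g)=f\,\cY_B(g).
\]
In both computations $f$ is pulled out on the left, so left-multiplication is used throughout and no noncommutativity issue arises.

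The only point needing care is when $M$ is a superalgebra, such as $\C[\bm x,\bm y,\bm\theta]$, rather than a commutative ring. There the argument still works because permutations act by genuine algebra automorphisms, so $\sigma(fg)=\sigma(f)\sigma(g)$ holds with no extra signs. Moreover the factor $f$ stays to the left of $\sigma(g)$ in every term, so no reordering of odd elements is ever needed. I expect no real obstacle beyond checking this.
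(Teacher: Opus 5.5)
Your proof is correct and follows essentially the same argument as the paper: expand $\cY_B$ and $\cC_B$, use $\sigma(fg)=\sigma(f)\sigma(g)$ together with $\sigma(f)=\sgn(\sigma)f$, and pull $f$ out on the left. Your remark about the superalgebra case (no reordering of odd elements is needed because $f$ stays on the left) is a harmless extra check that the paper leaves implicit.
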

\begin{proof}
Since $\sigma(f) = \sgn(\sigma) f$ for all $\sigma \in \S_B$, then
    \[
    \cY_B(fg)
    = \frac{1}{|B|!}\sum_{\sigma \in \S_B} \sgn(\sigma) \sigma(f)\sigma(g)
    = f \frac{1}{|B|!} \sum_{\sigma \in \S_B} \sigma(g) = f \; \cC_B(g),
    \]
    and
     \[
    \cC_B(fg)
    = \frac{1}{|B|!}\sum_{\sigma \in \S_B}  \sigma(f)\sigma(g)
    = f \frac{1}{|B|!} \sum_{\sigma \in \S_B} \sgn(\sigma)\sigma(g) = f \; \cY_B(g).
    \]
\end{proof}

\begin{lemma}\label{lem:commuting-operators}
Fix $n \geq 1$ and let $B \subseteq [n]$.
    For any nonnegative integers $r,s,\varepsilon$, the following operators commute:
   \begin{equation*}
        D_{r,s,\varepsilon} \cY_{B} = \cY_{B}D_{r,s,\varepsilon}
        \qquad \text{and}
        \qquad
        D_{r,s,\varepsilon} \cC_{B} = \cC_{B}D_{r,s,\varepsilon}.
    \end{equation*}

\end{lemma}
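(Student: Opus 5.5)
The plan is to show that $D_{r,s,\varepsilon}$ commutes with the action of every permutation $\sigma \in \mathfrak{S}_B$ (indeed every $\sigma\in\mathfrak{S}_n$). The commutation with $\cY_B$ and $\cC_B$ then follows by linearity, since both are linear combinations of such $\sigma$ with scalar coefficients $\sgn(\sigma)/|B|!$ and $1/|B|!$.

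First, fix $\sigma \in \mathfrak{S}_n$ acting diagonally, so $\sigma(x_i)=x_{\sigma(i)}$, $\sigma(y_i)=y_{\sigma(i)}$ and $\sigma(\theta_i)=\theta_{\sigma(i)}$, extended to an algebra automorphism of $\C[\bm x,\bm y,\bm\theta]$. The key identity is
\[
\sigma \circ \partial_{x_i} = \partial_{x_{\sigma(i)}}\circ \sigma ,
\]
and likewise for $\partial_{y_i}$ and $\partial_{\theta_i}$. To verify it I would check both sides on monomials $x^\alpha y^\beta \theta_{j_1}\cdots\theta_{j_k}$. For the bosonic variables this is the usual relabeling of exponents. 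For $\partial_{\theta_i}$ there is one point to confirm: the sign obtained by moving $\theta_i$ to the front of $\theta_{j_1}\cdots\theta_{j_k}$ equals the sign obtained by moving $\theta_{\sigma(i)}$ to the front of $\theta_{\sigma(j_1)}\cdots\theta_{\sigma(j_k)}$. This holds because $\sigma$ preserves the positions of the factors in the written product and only relabels them, so the number of transpositions performed is the same. Equivalently, $\partial_{\theta_i}$ is the unique odd derivation with $\partial_{\theta_i}(\theta_j)=\delta_{ij}$ that kills $\bm x,\bm y$. Conjugating by the algebra automorphism $\sigma$ gives an odd derivation sending $\theta_j\mapsto \delta_{\sigma(i) j}$, so it equals $\partial_{\theta_{\sigma(i)}}$.

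Composing these identities gives $\sigma\,\partial_{x_i}^r\partial_{y_i}^s\partial_{\theta_i}^\varepsilon = \partial_{x_{\sigma(i)}}^r\partial_{y_{\sigma(i)}}^s\partial_{\theta_{\sigma(i)}}^\varepsilon\,\sigma$. Summing over $i\in[n]$ and reindexing by $j=\sigma(i)$, which is a bijection of $[n]$, yields $\sigma D_{r,s,\varepsilon} = D_{r,s,\varepsilon}\sigma$. Averaging over $\mathfrak{S}_B$ with weights $\sgn(\sigma)/|B|!$ or $1/|B|!$ then gives both claimed equalities.

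The only step needing care is the sign bookkeeping for the fermionic derivative. I expect the derivation-uniqueness argument to dispose of it cleanly, so there is no real obstacle.
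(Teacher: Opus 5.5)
Your proposal is correct and matches the paper's proof: you establish $\sigma\,\partial_{x_i}\sigma^{-1}=\partial_{x_{\sigma(i)}}$ (and likewise for the $y$ and $\theta$ derivatives), sum over $i$ and reindex to get $\sigma D_{r,s,\varepsilon}=D_{r,s,\varepsilon}\sigma$, then average over $\mathfrak{S}_B$. Your extra check of the fermionic sign, either on monomials or via uniqueness of the odd derivation with $\theta_j\mapsto\delta_{\sigma(i)j}$, fills in a step the paper only states as an observation.
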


\begin{proof}
Observe that for all $\sigma \in \mathfrak{S}_n$ and for all $i \in \{1,\ldots,n\}$, 
\[
\sigma \partial_{x_i} \sigma^{-1} = \partial_{x_{\sigma(i)}},
\quad 
\sigma \partial_{y_i} \sigma^{-1} = \partial_{y_{\sigma(i)}},
\quad \text{and}\quad
\sigma \partial_{\theta_i} \sigma^{-1} = \partial_{\theta_{\sigma(i)}}.
\]
Thus we may write
\begin{equation*}
    \sigma (\partial_{x_i}^r \partial_{y_i}^s \partial_{\theta_i}^\varepsilon ) \sigma^{-1} =  (\sigma \partial_{x_i}^r  \sigma^{-1} )(\sigma  \partial_{y_i}^s \sigma^{-1} )( \sigma \partial_{\theta_i}^\varepsilon  \sigma^{-1}) = \partial_{x_{\sigma(i)}}^r\partial_{y_{\sigma(i)}}^s\partial_{\theta_{\sigma(i)}}^\varepsilon.
\end{equation*}
Then by summing over all $i$ and by reindexing via $j = \sigma(i)$, we obtain
\begin{equation*}
    \sigma D_{r,s,\varepsilon} \sigma^{-1} 
    = \sum_{i=1}^n \partial_{x_{\sigma(i)}}^r \partial_{y_{\sigma(i)}}^s \partial_{\theta_{\sigma(i)}}^\varepsilon 
    = \sum_{j=1}^n \partial_{x_j}^r \partial_{y_j}^s \partial_{\theta_j}^\varepsilon = D_{r,s,\varepsilon}.
\end{equation*}
Hence $\sigma D_{r,s,\varepsilon} = D_{r,s,\varepsilon} \sigma$ for any $\sigma \in \S_n$ and we conclude that
\begin{equation*}
|B|! D_{r,s,\varepsilon} \cY_{B} = \sum_{\sigma \in \mathfrak{S}_B} D_{r,s,\varepsilon} \sgn(\sigma) \sigma  = \sum_{\sigma \in \mathfrak{S}_B} \sgn(\sigma) \sigma D_{r,s,\varepsilon}=  |B|! \cY_{B} D_{r,s,\varepsilon}
\end{equation*}
and similarly,
\begin{equation*}
|B|! D_{r,s,\varepsilon} \cC_{B} = \sum_{\sigma \in \mathfrak{S}_B} D_{r,s,\varepsilon}  \sigma  = \sum_{\sigma \in \mathfrak{S}_B}  \sigma D_{r,s,\varepsilon}=  |B|! \cC_{B} D_{r,s,\varepsilon}.
\end{equation*}
\end{proof}

\subsection{Identifying hook-isotypic components}
\label{subsec:identifyhook}
Fix an integer $d$ with $0 \leq d \leq n-1$ and define the following sets: 
\begin{equation*}
   \beta_d := \{1,\ldots,n-d\}, \quad \text{and} \quad \overline{\beta_d} := [n]\setminus \beta_d=\{n-d+1,\ldots,n\}.
\end{equation*}
When the fixed choice of $d$ is clear, we write $\beta := \beta_d$ and $\overline{\beta} := \overline{\beta_d}$.
Then $\S_\beta \cong \S_{n-d} \times \S_1^{ d}$ and $\S_{\overline{\beta}} \cong \S_1^{n-d} \times \S_d$.

For each $0 \leq d \leq n-1$ and any $\mathfrak{S}_n$-module $M$,
consider the
\newword{$\S_{n-d}$-antisymmetric, $\S_d$-symmetric linear subspace}
\begin{align*}
M_{\beta_d}&= \cY_{\beta_d} \cC_{\overline{\beta_d}}(M)
= 
\{ f \in M : (\sigma,\tau)f = \sgn(\sigma) f \text{ for all } (\sigma,\tau) \in \S_{n-d} \times \S_d\} 
\subset M \end{align*}
and let 
\[
\widetilde{M}_{\beta_d} := \{ f \in M_{\beta_d} : \cY_{\beta_d \cup \{i\}}(f) =0 \text{ for all } i \in \overline{\beta_d}\}.
\]
Since we fix $d$ throughout, abbreviate $M_\beta := M_{\beta_d}$ and $\widetilde{M}_\beta := \widetilde{M}_{\beta_d}$.
Note that when $d=0$, we have $\beta_0 = [n]$ and $\overline{\beta_0} = \varnothing$, so $\widetilde{M}_{[n]} = M_{[n]}$.

The following isomorphism is straightforward, but we include a proof for completeness.

\begin{proposition}\label{prop:hook-hom-iso}
    For any $\mathfrak{S}_n$-module $M$, there is an isomorphism of vector spaces
    \begin{align*}
M_\beta &\cong \Hom_{\mathfrak{S}_n} (S^{(d,1^{n-d})} \oplus S^{(d+1,1^{n-d-1})}, M),
    \end{align*}
    with the convention that $S^{(0,1^n)}$ is $0$.
Thus, letting $\left\langle e_- \right\rangle = (S^{(d,1^{n-d})})_\beta$ and $\left\langle e_+ \right\rangle = (S^{(d+1,1^{n-d-1})})_\beta$, we have a decomposition, 
\begin{align*}
M_\beta 
&\cong \left\langle e_- \right\rangle  \otimes \Hom_{\S_n}( S^{(d,1^{n-d})}, M) \; \oplus \; \left\langle e_+ \right\rangle  \otimes \Hom_{\S_n}( S^{(d+1,1^{n-d-1})}, M).
    \end{align*}
\end{proposition}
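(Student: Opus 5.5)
The plan is to identify $M_\beta$ with a space of homomorphisms out of an induced module via Frobenius reciprocity. Then I will decompose that induced module with the Pieri rule.

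First, $\cY_\beta\cC_{\overline\beta}$ is the idempotent in $\C[\S_n]$ that projects onto the $\S_\beta\times\S_{\overline\beta}\cong\S_{n-d}\times\S_d$-isotypic component of type $\epsilon_{n-d}\boxtimes \mathrm{triv}_d$. Therefore
\[
M_\beta=\Hom_{\S_{n-d}\times\S_d}(\epsilon_{n-d}\boxtimes\mathrm{triv}_d,\Res M),
\]
with $f\in M_\beta$ corresponding to the map sending the generator to $f$. By Frobenius reciprocity this space is isomorphic to $\Hom_{\S_n}(\Ind_{\S_{n-d}\times\S_d}^{\S_n}(\epsilon\boxtimes\mathrm{triv}),M)$. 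The Frobenius characteristic of this induced module is $e_{n-d}h_d$. By the identity \eqref{eq:e-s-identity}, $e_{n-d}h_d=s_{(d+1,1^{n-d-1})}+s_{(d,1^{n-d})}$, and the second term vanishes when $d=0$. Since characters determine modules, the induced module is isomorphic to $S^{(d,1^{n-d})}\oplus S^{(d+1,1^{n-d-1})}$, and the first claimed isomorphism follows.

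For the refined decomposition, I apply the same reasoning with $M$ replaced by each Specht module. Frobenius reciprocity and multiplicity one in $e_{n-d}h_d$ give $\dim (S^{(d,1^{n-d})})_\beta=\dim(S^{(d+1,1^{n-d-1})})_\beta=1$, except that the first is $0$ when $d=0$. This justifies the notation $\langle e_\pm\rangle$.

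Next, decompose $M\cong\bigoplus_\lambda S^\lambda\otimes\Hom_{\S_n}(S^\lambda,M)$ canonically, where $\S_n$ acts only on the first factor. The operator $\cY_\beta\cC_{\overline\beta}$ acts only on the first factor, so
\[
M_\beta=\bigoplus_\lambda (S^\lambda)_\beta\otimes\Hom_{\S_n}(S^\lambda,M).
\]
By the dimension count, $(S^\lambda)_\beta$ is nonzero only for the two hooks $\lambda=(d,1^{n-d})$ and $\lambda=(d+1,1^{n-d-1})$. This yields the stated decomposition.

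There is no serious obstacle here. The only points requiring care are that $\cY_\beta\cC_{\overline\beta}$ is genuinely the isotypic projector for the $\epsilon_{n-d}\boxtimes\mathrm{triv}_d$ character of the product group (which holds because the two idempotents commute and act on disjoint index sets), and the boundary convention at $d=0$.
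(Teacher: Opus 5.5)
Your proposal is correct and follows essentially the same route as the paper. Both arguments identify $M_\beta$ with $\Hom_{\S_{n-d}\times\S_d}(\epsilon_{n-d}\boxtimes\mathbbm{1}_d,M)$ via $f\mapsto(\text{generator}\mapsto f)$, apply Frobenius reciprocity together with $\Ind(\epsilon_{n-d}\boxtimes\mathbbm{1}_d)\cong S^{(d,1^{n-d})}\oplus S^{(d+1,1^{n-d-1})}$, and then use $M\cong\bigoplus_\lambda S^\lambda\otimes\Hom_{\S_n}(S^\lambda,M)$ with $\cY_\beta\cC_{\overline\beta}$ acting on the first factor. The only cosmetic difference is that you justify the decomposition of the induced module through its Frobenius characteristic $e_{n-d}h_d$ and Equation~\eqref{eq:e-s-identity}, whereas the paper cites the Littlewood--Richardson rule.
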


\begin{proof}
Recall that by the Littlewood--Richardson rule, for any $0 \leq d \leq n-1$, we have a decomposition into Specht modules, 
\[
\Ind_{\mathfrak{S}_{n-d} \times \mathfrak{S}_d}^{\mathfrak{S}_n}(\epsilon_{n-d} \boxtimes \mathbbm{1}_d) \cong \Ind_{\mathfrak{S}_{n-d} \times \mathfrak{S}_d}^{\mathfrak{S}_n}(S^{(1^{n-d})} \boxtimes S^{(d)})\cong S^{(d,1^{n-d})} \oplus S^{(d+1,1^{n-d-1})},
\]
where $\epsilon$ and $\mathbbm{1}$ denote the corresponding sign and trivial representations, respectively, and $\boxtimes$ the outer tensor product. Since by definition 
\[
M_\beta = \{ f \in M \; :\; (\sigma,\tau)f = \sgn(\sigma)f \;\text{for all} \;    (\sigma,\tau) \in \mathfrak{S}_{n-d} \times \mathfrak{S}_d\},
\]
then by Frobenius reciprocity, it is equivalent to show that
\begin{align*}
M_\beta
 &\cong 
 \Hom_{\S_n} (
\Ind_{\mathfrak{S}_{n-d} \times \mathfrak{S}_d}^{\mathfrak{S}_n}(\epsilon_{n-d} \boxtimes \mathbbm{1}_d), M)\\
&\cong
\Hom_{\mathfrak{S}_{n-d} \times \mathfrak{S}_d} (
\epsilon_{n-d} \boxtimes \mathbbm{1}_d, \Res_{\mathfrak{S}_{n-d} \times \mathfrak{S}_d}^{\mathfrak{S}_n}M)
= \Hom_{\mathfrak{S}_{n-d} \times \mathfrak{S}_d} (
\epsilon_{n-d} \boxtimes \mathbbm{1}_d, M).
\end{align*}

Now, since $\epsilon \boxtimes \mathbbm{1} \cong \C \otimes \C$ then any $\Psi \in \Hom_{\mathfrak{S}_{n-d} \times \mathfrak{S}_d} (
\epsilon_{n-d} \boxtimes \mathbbm{1}_d, M)$ is uniquely determined by its image $\Psi(1\otimes 1) \in M$. Since $\Psi$ is an $\mathfrak{S}_{n-d} \times \mathfrak{S}_d$-module homomorphism we have that for any $(\sigma,\tau) \in \mathfrak{S}_{n-d} \times \mathfrak{S}_d$, 
\[
(\sigma,\tau)\Psi(1\otimes 1)=
\Psi((\sigma,\tau)(1\otimes 1)) = \Psi(\sgn(\sigma)1\otimes 1) = \sgn(\sigma)\Psi(1\otimes 1).
\]
Thus $\Psi(1\otimes 1) \in M_\beta$ for all $\Psi$.

Conversely, given any $f \in M_\beta$ let $\Psi_f: \epsilon_{n-d} \boxtimes \mathbbm{1}_d  \to M$ be defined via $\Psi_f(1 \otimes 1) =f$. Evidently, $\Psi_f$ is a linear map such that for any $(\sigma,\tau) \in \mathfrak{S}_{n-d} \times \mathfrak{S}_d$, 
\[
(\sigma,\tau)\Psi_f(1 \otimes 1) = (\sigma,\tau)f = \sgn(\sigma) f = \sgn(\sigma)  \Psi_f(1 \otimes 1) =   \Psi_f(\sgn(\sigma)(1 \otimes 1)) = \Psi_f((\sigma,\tau)(1\otimes 1)).
\]
Thus $\Psi_f$ is an $\mathfrak{S}_{n-d} \times \mathfrak{S}_d$-module homomorphism, as desired. 

By semisimplicity, the evaluation map
\[
\bigoplus_{\lambda} S^\lambda \otimes M_\lambda \longrightarrow M, \qquad v \otimes \phi \mapsto \phi(v)
\]
is an isomorphism of $\S_n$-modules, where $M_\lambda = \Hom_{\S_n}(S^\lambda, M)$ is the \emph{multiplicity space} of $\lambda$, and $\S_n$ acts on the first tensor factor. 
Each $\phi \in M_\lambda$ commutes with $\cY_\beta \cC_{\overline{\beta}}$, so applying this to both sides gives $M_\beta \cong \bigoplus_\lambda (S^\lambda)_\beta \otimes M_\lambda$.
Since by the first claim we have that 
\[
(S^\lambda)_\beta \cong \begin{cases}
    \C & \text{if } \lambda= (d,1^{n-d}) \text { or } (d+1, 1^{n-d-1}) \\
    0 & \text{otherwise},
\end{cases}
\]
then letting $e_-$ and $e_+$ denote the respective generators when $\lambda= (d,1^{n-d}), (d+1, 1^{n-d-1})$, we obtain
\[
M_\beta \cong 
\left\langle e_- \right\rangle \otimes \Hom_{\S_n}( S^{(d,1^{n-d})}, M) \; \oplus \; \left\langle e_+ \right\rangle \otimes \Hom_{\S_n}( S^{(d+1,1^{n-d-1})}, M).
\]
\end{proof}

We establish some results on the antisymmetrizer $\cY_{\beta \cup \{i\}}$.
\begin{lemma}\label{lem:simplified-criteria}
Let $ 1 \leq d \leq n-1$.
    If $f \in M_\beta$ and $\cY_{\beta \cup \{i_0\}} f =0$ for a single $i_0 \in \overline{\beta}$, then $\cY_{\beta \cup \{i\}} f = 0$ for all $i \in \overline{\beta}$. 
    Hence 
    \begin{equation*}
        \widetilde{M}_\beta = \{ f \in M_\beta  : \text{there exists } i_0 \in \overline{\beta} \text{ such that } \cY_{\beta \cup \{i_0\} }f = 0\}.
    \end{equation*}
\end{lemma}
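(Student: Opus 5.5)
The plan is to use the symmetry of $f$ under $\S_{\overline{\beta}}$ to move from one index $i_0$ to any other. Fix $i \in \overline{\beta}$ with $i \neq i_0$, and let $\tau = (i_0\, i) \in \S_{\overline{\beta}}$ be the transposition. Since $f \in M_\beta$, we have $\cC_{\overline{\beta}}(f) = f$, and in particular $\tau f = f$.

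The key observation is a conjugation identity. Because $\tau$ fixes $\beta$ pointwise and swaps $i_0$ with $i$, conjugation by $\tau$ carries $\S_{\beta\cup\{i_0\}}$ onto $\S_{\beta \cup \{i\}}$ bijectively and preserves signs. Hence
\[
\tau\, \cY_{\beta\cup\{i_0\}}\, \tau^{-1} = \frac{1}{(n-d+1)!}\sum_{\sigma \in \S_{\beta\cup\{i_0\}}} \sgn(\sigma)\, \tau\sigma\tau^{-1} = \cY_{\beta\cup\{i\}}.
\]

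Applying this to $f$ and using $\tau^{-1} f = \tau f = f$ gives
\[
\cY_{\beta\cup\{i\}} f = \tau\, \cY_{\beta\cup\{i_0\}}\, \tau^{-1} f = \tau\, \cY_{\beta\cup\{i_0\}} f = \tau(0) = 0.
\]
This proves the first claim.

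For the description of $\widetilde{M}_\beta$, one inclusion is immediate. If $f \in \widetilde{M}_\beta$, then $\cY_{\beta\cup\{i\}} f = 0$ for every $i \in \overline{\beta}$. Since $d \geq 1$, the set $\overline{\beta}$ is nonempty, so some $i_0$ exists. The reverse inclusion is exactly the first claim.

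No real obstacle is expected. The only point to check carefully is that conjugation by $\tau$ preserves the sign character and maps the Young subgroup onto the correct one, which holds because $\tau$ fixes every element of $\beta$.
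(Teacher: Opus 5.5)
Your proposal is correct and follows essentially the same argument as the paper: conjugate $\cY_{\beta\cup\{i_0\}}$ by the transposition $(i_0\,i)\in\S_{\overline{\beta}}$ and use the $\S_{\overline{\beta}}$-symmetry of $f$. Your extra remark that $\overline{\beta}\neq\varnothing$ when $d\ge 1$, used for the set equality, is a small point the paper leaves implicit.
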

\begin{proof}
Fix $i_0 \in \overline{\beta}$.
    For $i \in \overline{\beta}$, let $\tau$ denote the transposition $(i_0, i) \in \mathfrak{S}_{\overline{\beta}}$.
    Then $\tau$ fixes $\beta$ pointwise and $\tau(\beta \cup \{i_0\}) = \beta \cup \{i\}$, so $\cY_{\beta \cup \{i\}} = \tau \cY_{\beta \cup \{i_0\}} \tau^{-1}$.
Since $\cC_{\overline{\beta}}(f) = f$, then $\tau^{-1} f=f$, so $\cY_{\beta \cup \{i\}} f = \tau \cY_{\beta \cup \{i_0\}} f = 0$.
\end{proof}

\begin{corollary}\label{cor:antisymmetrizers-hook}
Let $1 \leq d \leq n-1$ and  $i_0 \in \overline{\beta}$. Then $\cY_{\beta \cup \{i_0\}}(\left\langle e_+ \right\rangle)=0$ and $\cY_{\beta \cup \{i_0\}}(\left\langle e_- \right\rangle)\cong \left\langle e_- \right\rangle$.
\end{corollary}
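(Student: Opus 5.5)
The plan is to compute the two claims inside the single Specht modules $S^{(d+1,1^{n-d-1})}$ and $S^{(d,1^{n-d})}$, by testing for an $\S_{\beta\cup\{i_0\}}$-antisymmetric vector. The key observation is that $\cY_{\beta\cup\{i_0\}}$ is an idempotent whose image in any $\S_n$-module $V$ is the space of $\S_{\beta\cup\{i_0\}}$-antisymmetric vectors. Here $|\beta\cup\{i_0\}| = n-d+1$.

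First, $e_+$. A nonzero $\S_{n-d+1}$-antisymmetric vector in $V=S^\lambda$ exists iff $\Hom_{\S_{n-d+1}}(\epsilon,\Res S^\lambda)\neq 0$. By the branching rule (iterated restriction to $\S_{n-d+1}$, i.e. Littlewood--Richardson with $\epsilon_{n-d+1}\boxtimes$ anything), this happens iff $\lambda$ contains a column of length $n-d+1$. The shape $(d+1,1^{n-d-1})$ has first column of length exactly $n-d$. Hence $S^{(d+1,1^{n-d-1})}$ has no $\S_{\beta\cup\{i_0\}}$-antisymmetric vectors, so $\cY_{\beta\cup\{i_0\}}$ kills all of $S^{(d+1,1^{n-d-1})}$, and in particular kills $e_+$.

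Second, $e_-$. Here $(d,1^{n-d})$ has first column of length $n-d+1$. I would show that $\cY_{\beta\cup\{i_0\}}e_-\neq 0$; since the target is one-dimensional, this gives $\cY_{\beta\cup\{i_0\}}(\langle e_-\rangle)\cong\langle e_-\rangle$. A concrete route uses the realization $S^{(d,1^{n-d})}\cong \epsilon\otimes\bigwedge^{d-1}S^{(n-1,1)}$, or more simply the explicit polytabloid / Garnir model, which I set up in three steps.
\begin{enumerate}
\item In $\Ind(\epsilon_{n-d}\boxtimes\mathbbm 1_d)$, take the spanning vector $v=\cC_{\overline\beta}\cY_\beta\,\mathbf{1}$ of the $\beta$-isotypic line.
\item Note that $\Ind(\epsilon_{n-d}\boxtimes\mathbbm 1_d)\cong S^{(d,1^{n-d})}\oplus S^{(d+1,1^{n-d-1})}$. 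By the first step, $\cY_{\beta\cup\{i_0\}}$ annihilates the second summand, so $\cY_{\beta\cup\{i_0\}}v$ equals $\cY_{\beta\cup\{i_0\}}$ applied to the $e_-$-component of $v$.
\item Compute $\cY_{\beta\cup\{i_0\}}v$ directly in the induced module, modeled as functions on $(n-d)$-subsets with sign. It is a nonzero multiple of the alternating sum over the $(n-d)$-subsets of $\beta\cup\{i_0\}$, and this sum is visibly nonzero.
\end{enumerate}
This forces the $e_-$-component to be nonzero, and so $\cY_{\beta\cup\{i_0\}}e_-\ne0$.

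Finally, to return to the notation of the Corollary, one could also note the following. Write $x=\cY_{\beta\cup\{i_0\}}e_-$. Then $\cY_\beta x = x$, but $x$ need not lie in $\langle e_-\rangle$ itself. Since the statement is an isomorphism of one-dimensional spaces, nonvanishing suffices.

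The main obstacle is the nonvanishing computation in step (iii). Care is needed with the $1/|B|!$ normalizations and the coset bookkeeping, to make sure the alternating sum does not cancel. I expect that evaluating at the tabloid indexed by $\beta$ gives a coefficient $\frac{1}{(n-d+1)}\cdot(\pm1)$ that is not cancelled, because all other contributions land on different subsets.
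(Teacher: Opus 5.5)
Your proposal is correct and is essentially the paper's argument. The vanishing on $\langle e_+\rangle$ comes from the column-length obstruction: you phrase it via branching of the restriction, the paper via Pieri on $\Ind_{\S_{n-d+1}\times\S_1^{d-1}}^{\S_n}(\epsilon_{n-d+1}\boxtimes\mathbbm{1}_1^{d-1})$, and these agree by Frobenius reciprocity. The nonvanishing on $\langle e_-\rangle$ comes, as in the paper, from modeling $S^{(d,1^{n-d})}\oplus S^{(d+1,1^{n-d-1})}$ as $\bigwedge^{n-d}\C^n$, noting that $\cY_{\beta\cup\{i_0\}}(\bm{u}_\beta)$ has coefficient $\frac{1}{n-d+1}$ on $\bm{u}_\beta$, and splitting $\bm{u}_\beta$ into its $e_\pm$-components.
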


\begin{proof}
    Without loss of generality, by Lemma~\ref{lem:simplified-criteria}, assume $i_0 =n-d+1$ and set $\alpha = \beta \cup \{n-d+1\}$. Then, as in the proof of Proposition~\ref{prop:hook-hom-iso}, we have that for any $\S_n$-module $M$,
    \[
    \cY_\alpha(M) \cong \Hom_{\S_n}\left(\Ind_{\S_{n-d+1} \times \S_1^{d-1}}^{\S_n}(\epsilon_{n-d+1} \boxtimes \mathbbm{1}_1^{d-1}), M\right). 
    \] 
    Repeated application of the Pieri rule gives
    \[
\cY_\alpha(M) \cong \Hom_{\S_n}\left( \bigoplus_{\nu \supseteq (1^{n-d+1})} (S^\nu)^{\oplus f^{\nu/(1^{n-d+1})}}, M \right)
    \]
    where the sum is taken over all partitions $\nu$ of size $n$ containing the column shape $(1^{n-d+1})$, each with multiplicity $f^{\nu/(1^{n-d+1})}$, the number of standard Young tableaux of skew shape $\nu/(1^{n-d+1})$. 
Since $(d+1,1^{n-d-1})$ has only $n-d$ parts, and since $(d,1^{n-d})/(1^{n-d+1})$ is a single row of $d-1$ boxes, by Schur's lemma it follows that 
$\cY_\alpha(S^{(d+1,1^{n-d-1})})=0$ and $\cY_\alpha(S^{(d,1^{n-d})})\cong \C$. 
Since $\left\langle e_+ \right\rangle \subseteq S^{(d+1,1^{n-d-1})}$, then $\cY_\alpha(\left\langle e_+ \right\rangle )=0$. 

Let $\{u_1,\ldots,u_n\}$ denote the standard basis for $\C^n$. 
One can check that $S^{(d,1^{n-d})} \oplus S^{(d+1,1^{n-d-1})} \cong \bigwedge^{n-d} \C^n$ with basis $\bm{u}_J:= u_{j_1} \wedge \dots \wedge u_{j_{n-d}}$ for subsets $J = \{j_1<\dots< j_{n-d}\} \subseteq [n]$.
Consider $\bm{u}_\beta = u_1 \wedge \dots \wedge u_{n-d}$. 
Since $\cY_\beta(\bm{u}_\beta) = \bm{u}_\beta$ and $\cC_{\overline{\beta}}(\bm{u}_\beta)= \bm{u}_\beta$, it follows that $\bm{u}_\beta \in (S^{(d,1^{n-d})} \oplus S^{(d+1,1^{n-d-1})})_\beta$. 
Every element of $\S_\alpha$ factors as $\sigma$ or as $(n-d+1,k)\sigma$ for some $\sigma \in \S_\beta$ and $k \in \beta$, so
\begin{align*}
\cY_\alpha(\bm{u}_\beta) &= \frac{1}{(n-d+1)!}\sum_{\sigma \in \S_\beta} \left(\bm{u}_\beta - \sum_{k=1}^{n-d}(n-d+1,k)\bm{u}_\beta\right)\\
&=
\frac{1}{n-d+1}\left(\bm{u}_\beta - \sum_{k=1}^{n-d}(n-d+1,k)\bm{u}_\beta\right).
\end{align*}
For $k \in \beta$, note that $(n-d+1,k)\bm{u}_\beta = u_1 \wedge \cdots \wedge u_{k-1} \wedge u_{n-d+1} \wedge u_{k+1} \wedge \cdots \wedge u_{n-d}$ is, up to sign, the basis vector $\bm{u}_J$ for $J = (\beta \setminus \{k\}) \cup \{n-d+1\} \neq \beta$.
Since the $\bm{u}_J$ form a basis, the coefficient of $\bm{u}_\beta$ in $\cY_\alpha(\bm{u}_\beta)$ is $\frac{1}{n-d+1} \neq 0$, so $\cY_\alpha(\bm{u}_\beta) \neq 0$.

Since $\cY_\beta \cC_{\overline{\beta}}$ acts componentwise on direct sums, $(S^{(d,1^{n-d})} \oplus S^{(d+1,1^{n-d-1})})_\beta = \left\langle e_-\right\rangle \oplus \left\langle e_+ \right\rangle$, so we may write $\bm{u}_\beta = \bm{u}_\beta^- + \bm{u}_\beta^+$ with $\bm{u}_\beta^- \in \left\langle e_- \right\rangle$ and $\bm{u}_\beta^+ \in \left\langle e_+ \right\rangle$.
Since $\cY_\alpha(\left\langle e_+ \right\rangle) = 0$, we have $\cY_\alpha(\bm{u}_\beta^-) = \cY_\alpha(\bm{u}_\beta) \neq 0$.
In particular $\bm{u}_\beta^- \neq 0$, so $\bm{u}_\beta^-$ spans the line $\left\langle e_- \right\rangle$, and thus $\cY_\alpha$ is injective on $\left\langle e_- \right\rangle$, that is, $\cY_\alpha(\left\langle e_- \right\rangle) \cong \left\langle e_- \right\rangle$.
\end{proof}

\begin{theorem}\label{thm:tildehook-hom-iso}
For any $\S_n$-module $M$ and $i \in \overline{\beta}$, we have $\cY_{\beta \cup \{i\}}(M_\beta) \cong \Hom_{\S_n}( S^{(d,1^{n-d})}, M)$. Furthermore,
    \[
\widetilde{M}_\beta \cong 
\Hom_{\S_n}( S^{(d+1,1^{n-d-1})}, M).
    \]
\end{theorem}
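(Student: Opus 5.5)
The plan is to use the decomposition of Proposition~\ref{prop:hook-hom-iso} and check how $\cY_{\beta\cup\{i\}}$ acts on each summand, using Corollary~\ref{cor:antisymmetrizers-hook}. The case $d=0$ is degenerate: $\overline{\beta}$ is empty, $\widetilde{M}_{[n]}=M_{[n]}$, and $S^{(0,1^n)}=0$. Then Proposition~\ref{prop:hook-hom-iso} already gives $M_{[n]}\cong \Hom_{\S_n}(S^{(1^n)},M)$, so we may assume $1\le d\le n-1$. By Lemma~\ref{lem:simplified-criteria} and its conjugation argument, it suffices to treat a single $i\in\overline{\beta}$; set $\alpha=\beta\cup\{i\}$.

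Write $M\cong\bigoplus_\lambda S^\lambda\otimes M_\lambda$ via the evaluation isomorphism, where $M_\lambda=\Hom_{\S_n}(S^\lambda,M)$ and $\S_n$ acts only on the first factor. The group-algebra elements $\cY_\beta\cC_{\overline\beta}$ and $\cY_\alpha$ act as $(\,\cdot\,)\otimes \mathrm{id}$. Hence
\[
M_\beta\cong \langle e_-\rangle\otimes M_{(d,1^{n-d})}\oplus\langle e_+\rangle\otimes M_{(d+1,1^{n-d-1})},
\]
as in Proposition~\ref{prop:hook-hom-iso}, and
\[
\cY_\alpha(M_\beta)\cong \cY_\alpha\langle e_-\rangle\otimes M_{(d,1^{n-d})}\oplus \cY_\alpha\langle e_+\rangle\otimes M_{(d+1,1^{n-d-1})}.
\]
Corollary~\ref{cor:antisymmetrizers-hook} says $\cY_\alpha\langle e_+\rangle=0$ and that $\cY_\alpha$ is injective on the line $\langle e_-\rangle$. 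Therefore $\cY_\alpha(M_\beta)\cong \langle e_-\rangle\otimes M_{(d,1^{n-d})}\cong\Hom_{\S_n}(S^{(d,1^{n-d})},M)$, which is the first claim.

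For the second claim, $\widetilde{M}_\beta$ is by definition (and by Lemma~\ref{lem:simplified-criteria}) the kernel of $\cY_\alpha$ restricted to $M_\beta$. Applying $\cY_\alpha\otimes\mathrm{id}$ componentwise, it kills the whole $e_+$ summand. On the $e_-$ summand it is $(\text{injective map on a line})\otimes\mathrm{id}$, hence injective. Indeed, the image of $e_-\otimes\phi$ is $\cY_\alpha(e_-)\otimes\phi$ with $\cY_\alpha(e_-)\neq0$, and the images of the two summands lie in different isotypic components, so they cannot cancel. Thus the kernel is exactly $\langle e_+\rangle\otimes M_{(d+1,1^{n-d-1})}\cong\Hom_{\S_n}(S^{(d+1,1^{n-d-1})},M)$.

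The main point needing care is justifying that $\cY_\alpha$ and $\cY_\beta\cC_{\overline\beta}$ respect the isotypic decomposition and act only on the Specht factor. This holds because they are elements of $\C[\S_n]$ and every $\phi\in M_\lambda$ is $\S_n$-equivariant. A second point is that injectivity on $\langle e_-\rangle$ tensored with the multiplicity space stays injective, which is immediate since tensoring over $\C$ is exact.
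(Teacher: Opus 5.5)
Your proposal is correct and follows essentially the same route as the paper: dispatch $d=0$ via Proposition~\ref{prop:hook-hom-iso}, then use the decomposition $M_\beta \cong \langle e_-\rangle\otimes \Hom_{\S_n}(S^{(d,1^{n-d})},M)\oplus\langle e_+\rangle\otimes \Hom_{\S_n}(S^{(d+1,1^{n-d-1})},M)$ together with Corollary~\ref{cor:antisymmetrizers-hook} to identify the image and the kernel of $\cY_{\beta\cup\{i\}}|_{M_\beta}$, with Lemma~\ref{lem:simplified-criteria} identifying $\widetilde{M}_\beta$ as that kernel. Your explicit remark that $\cY_{\beta\cup\{i\}}$ acts as $\cY_{\beta\cup\{i\}}\otimes\mathrm{id}$ under the equivariant evaluation isomorphism is a step the paper uses only implicitly. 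The extra remark about images lying in different isotypic components is unnecessary, since the $e_+$ summand maps to zero, but it does no harm.
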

\begin{proof}

If $d = 0$, then $\overline{\beta_0} = \emptyset$, so $\widetilde{M}_{[n]} = M_{[n]} \cong \Hom_{\S_n}(S^{(1^n)}, M)$ by Proposition~\ref{prop:hook-hom-iso} (with $S^{(0,1^n)} = 0$). 

Now let $1 \leq d \leq n-1$, fix $i \in \overline{\beta}$, and let $\alpha = \beta \cup \{i\}$.
Consider $\cY_{\alpha}$ as a linear map on $M$.
By Proposition~\ref{prop:hook-hom-iso} and Corollary~\ref{cor:antisymmetrizers-hook}, 
\begin{align*}
\cY_{\alpha}(M_\beta) &\cong \cY_{\alpha}(\left\langle e_- \right\rangle ) \otimes \Hom_{\S_n}( S^{(d,1^{n-d})}, M) \; \oplus \; \cY_{\alpha}(\left\langle e_+ \right\rangle)   \otimes \Hom_{\S_n}( S^{(d+1,1^{n-d-1})}, M) \\
&\cong \left\langle e_- \right\rangle  \otimes \Hom_{\S_n}( S^{(d,1^{n-d})}, M). 
\end{align*}
Hence the restriction map $\cY_{\alpha}|_{M_\beta}: M_\beta \to \cY_{\alpha}(M_\beta)$ has kernel $\left\langle e_+ \right\rangle   \otimes \Hom_{\S_n}( S^{(d+1,1^{n-d-1})}, M) $. 
On the other hand, since by Lemma~\ref{lem:simplified-criteria}, $\widetilde{M}_\beta = \ker(\cY_{\alpha}|_{M_\beta})$, then it follows that
\[
\widetilde{M}_\beta \cong \left\langle e_+ \right\rangle   \otimes \Hom_{\S_n}( S^{(d+1,1^{n-d-1})}, M) \cong \Hom_{\S_n}( S^{(d+1,1^{n-d-1})}, M).
\]
\end{proof}

As a consequence of the proof above, the hook-isotypic components arise as both the image and the kernel of antisymmetrizers.

\begin{corollary}
    For any $1\leq d \leq n-1$ and $i \in \overline{\beta_d}$, we have that $\widetilde{M}_{\beta_{d-1}} \cong \cY_{\beta_d \cup \{i\}}(M_{\beta_d})$.
\end{corollary}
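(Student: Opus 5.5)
Both sides reduce to the same multiplicity space, so I will match them through Theorem~\ref{thm:tildehook-hom-iso}, applied once with parameter $d-1$ and once with parameter $d$.

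For the left-hand side, since $1 \leq d \leq n-1$ we have $0 \leq d-1 \leq n-2$, so Theorem~\ref{thm:tildehook-hom-iso} applies with $d-1$ in place of $d$:
\[
\widetilde{M}_{\beta_{d-1}} \cong \Hom_{\S_n}\bigl(S^{((d-1)+1,1^{n-(d-1)-1})}, M\bigr) = \Hom_{\S_n}\bigl(S^{(d,1^{n-d})}, M\bigr).
\]
When $d=1$ this is the case $\beta_0=[n]$, which the theorem handles directly through Proposition~\ref{prop:hook-hom-iso}; it gives $M_{[n]} \cong \Hom_{\S_n}(S^{(1^n)},M)$, and this agrees with the display since $(1,1^{n-1}) = (1^n)$.

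For the right-hand side, the first assertion of Theorem~\ref{thm:tildehook-hom-iso} with parameter $d$ and any $i \in \overline{\beta_d}$ gives
\[
\cY_{\beta_d\cup\{i\}}(M_{\beta_d}) \cong \Hom_{\S_n}\bigl(S^{(d,1^{n-d})},M\bigr).
\]
Composing the two isomorphisms proves the corollary.

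The only point that needs care is the index shift: the hook $(d'+1,1^{n-d'-1})$ with $d'=d-1$ must be exactly the hook $(d,1^{n-d})$ appearing in the image statement, which it is. If one also wants the isomorphism to respect gradings when $M$ is a multigraded $\S_n$-module, this follows because both isomorphisms come from the $\S_n$-equivariant decomposition $M \cong \bigoplus_\lambda S^\lambda \otimes M_\lambda$. That decomposition is compatible with the grading, so each side is identified degree by degree with the same multiplicity space $\Hom_{\S_n}(S^{(d,1^{n-d})},M)$.
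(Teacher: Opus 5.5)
Your proof is correct and takes essentially the same approach as the paper, which states this corollary as an immediate consequence of Theorem~\ref{thm:tildehook-hom-iso}. As you do, it identifies both sides with $\Hom_{\S_n}(S^{(d,1^{n-d})},M)$ by applying that theorem at parameter $d-1$ (the kernel statement) and at parameter $d$ (the image statement), and you handle the index shift and the $d=1$ boundary case correctly.
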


We apply these results directly to $M=H_n^{(2,0)}$. For convenience, set
\begin{align*}
G_{\beta_d} &:=\left(  H_n^{(2,0)}\right)_{\beta_d}
\qquad \text{ and } \qquad \widetilde{G}_{\beta_d} := \widetilde{\left(  H_n^{(2,0)}\right)}_{\beta_d},
\end{align*}
and, similarly to before, write $G_\beta := G_{\beta_d}$ and $\widetilde{G}_\beta := \widetilde{G}_{\beta_d}$.

The following corollary provides a combinatorial formula for the bigraded dimensions of the hook-isotypic components of 
$H_n^{(2,0)}$.

\begin{corollary}\label{cor:schroeder-hook-dim}
    For $0\leq d \leq n-1$, the bigraded Hilbert series of $G_{\beta_d}$ and $\widetilde{G}_{\beta_d}$ are given by
    \begin{align*}
\Hilb(G_{\beta_d};q,t) = S_{n,d}(q,t) \qquad \text{ and } \qquad \Hilb(\widetilde{G}_{\beta_d};q,t) = \widetilde{S}_{n,d}(q,t).    
    \end{align*}
\end{corollary}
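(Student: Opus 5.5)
The plan is to compute both Hilbert series by applying the hook-component identifications with $M = H_n^{(2,0)}$, viewed as a bigraded $\S_n$-module. Since $\S_n$ acts by degree-preserving linear maps, each bigraded piece $(H_n^{(2,0)})_{r,s}$ is itself an $\S_n$-module. The operators $\cY_{\beta}$, $\cC_{\overline{\beta}}$ and $\cY_{\beta\cup\{i\}}$ are built from elements of $\C[\S_n]$, so they preserve the bigrading. Consequently
\[
(G_\beta)_{r,s} = \bigl((H_n^{(2,0)})_{r,s}\bigr)_\beta \quad\text{and}\quad (\widetilde{G}_\beta)_{r,s} = \widetilde{\bigl((H_n^{(2,0)})_{r,s}\bigr)}_\beta.
\]
This lets us apply Proposition~\ref{prop:hook-hom-iso} and Theorem~\ref{thm:tildehook-hom-iso} degree by degree.

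First, Proposition~\ref{prop:hook-hom-iso} applied to $M = (H_n^{(2,0)})_{r,s}$ gives
\[
\dim (G_\beta)_{r,s} = \dim \Hom_{\S_n}\bigl(S^{(d,1^{n-d})}, M\bigr) + \dim \Hom_{\S_n}\bigl(S^{(d+1,1^{n-d-1})}, M\bigr).
\]
Each term is the multiplicity of the corresponding Specht module in $M$. Summing over $(r,s)$ with weight $q^rt^s$ therefore produces
\[
\Bigl\langle \Frob(H_n^{(2,0)};q,t),\, s_{(d,1^{n-d})} + s_{(d+1,1^{n-d-1})}\Bigr\rangle.
\]
Next I identify this with a Schröder polynomial. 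Proposition~\ref{prop:H-R-isomorphism}(1) says $H_n^{(2,0)}\cong R_n^{(2,0)}$ as bigraded $\S_n$-modules. Theorem~\ref{thm:haiman} then replaces the Frobenius series by $\nabla e_n$. By Equation~\eqref{eq:e-s-identity} the pairing becomes $\langle \nabla e_n, e_{n-d}h_d\rangle$, which equals $S_{n,d}(q,t)$ by Theorem~\ref{thm:qt-Schroeder-two-hooks}. The same result also follows from Theorem~\ref{thm:qt-Schroeder-one-hook} together with Proposition~\ref{prop:Schroder-splitting}. The boundary case $d=0$ is handled by the convention $S^{(0,1^n)}=0$ and $s_{(0,1^n)}=0$.

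For $\widetilde{G}_\beta$ I argue the same way, now using Theorem~\ref{thm:tildehook-hom-iso} in each bidegree:
\[
\dim(\widetilde{G}_\beta)_{r,s} = \dim\Hom_{\S_n}\bigl(S^{(d+1,1^{n-d-1})}, (H_n^{(2,0)})_{r,s}\bigr).
\]
The generating function is then $\langle \nabla e_n, s_{(d+1,1^{n-d-1})}\rangle$, which equals $\widetilde{S}_{n,d}(q,t)$ by Theorem~\ref{thm:qt-Schroeder-one-hook}.

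The only point needing care is the compatibility of the earlier results with the grading. Those results are stated for a single $\S_n$-module, so I must check that the subspaces $M_\beta$ and $\widetilde{M}_\beta$ decompose along the bigrading. This holds because they are defined as images and kernels of degree-preserving operators. Everything else is a direct substitution of earlier results.
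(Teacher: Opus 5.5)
Your proposal is correct and follows essentially the same route as the paper. You apply Proposition~\ref{prop:hook-hom-iso} and Theorem~\ref{thm:tildehook-hom-iso} to $M=H_n^{(2,0)}$, transfer to $R_n^{(2,0)}$ and $\nabla e_n$ via Proposition~\ref{prop:H-R-isomorphism} and Theorem~\ref{thm:haiman}, and conclude with Equation~\eqref{eq:e-s-identity} and the $(q,t)$-Schr\"oder theorems. Your explicit check that $G_\beta$ and $\widetilde{G}_\beta$ split along the bigrading, so the module-level isomorphisms apply degree by degree, fills in a step the paper leaves implicit.
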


\begin{proof}
 Computing the graded dimensions, by Proposition~\ref{prop:H-R-isomorphism}, Theorem~\ref{thm:haiman}, Proposition~\ref{prop:hook-hom-iso}, and Theorem~\ref{thm:tildehook-hom-iso}
 it follows that, 
\begin{align*}
\Hilb(G_\beta;q,t) &= 
\Hilb\left(\Hom_{\mathfrak{S}_n} \left(S^{(d,1^{n-d})} \oplus S^{(d+1,1^{n-d-1})}, H_n^{(2,0)}\right);q,t\right) \\
& = \left\langle s_{(d,1^{n-d})} + s_{(d+1,1^{n-d-1})}, \nabla e_n\right\rangle 
=
\Big\langle \nabla e_n, e_{n-d}h_d \Big\rangle
\end{align*}
and similarly for $\widetilde{G}_\beta$. The final equalities follow from Theorems~\ref{thm:qt-Schroeder-two-hooks} and \ref{thm:qt-Schroeder-one-hook}.
\end{proof}

\subsection{A harmonic lift of the hook-isotypic components}\label{subsec:harmoniclift}

Recall Rota's \newword{apolar form} for diagonal harmonics \cite[Section 1.3]{Haiman1994}: 
\begin{equation}\label{eq:Rota-apolar-form}
\left\langle f,g \right\rangle_A := f(\partial_{x_1}, \dots, \partial_{x_n}, \partial_{y_1}, \dots, \partial_{y_n}) g(x_1, \dots, x_n, y_1, \dots, y_n)|_{\bm{x}, \bm{y}=0}.
\end{equation}

Then, $\left\langle \cdot, \cdot \right\rangle_A$ is a symmetric, nondegenerate $\mathfrak{S}_n$-invariant form on $\C[\bm{x},\bm{y}]$ where differentiation is adjoint to multiplication, so that for any variable $z \in \bm{x} \cup \bm{y}$ and any $\sigma \in \S_n$:
\[
\left\langle z \cdot f, g \right\rangle_A = \left\langle f, \partial_z \cdot g\right\rangle_A \quad\text{ and }\quad \left\langle \sigma f, g \right\rangle_A = \left\langle f, \sigma^{-1} g \right\rangle_A.
\]
In particular, $\left\langle \cY_B(f),g \right\rangle_A = \left\langle f, \cY_B(g)\right\rangle_A$ for any $B \subseteq [n]$, so $\cY_B$ is self-adjoint.

Furthermore, from \cite[Section 1.3]{Haiman1994} the orthogonal complement of $I_n^{(2,0)}$ in $\C[\bm{x},\bm{y}]$ with respect to the apolar form is $(I_n^{(2,0)})^\perp = H_n^{(2,0)}$, so $\C[\bm{x},\bm{y}] = H_n^{(2,0)} \oplus I_n^{(2,0)}$.
Recalling explicit generators for the ideals $I_n^{(2,0)}$ and $I_n^{(2,1)}$ will be useful.
The diagonal case of the following result is Weyl's polarization theorem \cite{Weyl} and the super diagonal case is a special case of \cite[Theorem 17]{OrellanaZabrocki}.

\begin{proposition}\label{prop:ideal-generators}
    The (nonunital) algebra of diagonal invariants
    $\C[\bm{x}_B, \bm{y}_B]_+^{\mathfrak{S}_B}$ is generated by the polarized power sums
    \begin{equation}\label{eq:gen-set-A-20}
    \{ p_{r,s}(\bm{x}_B,\bm{y}_B) \;| \;
        r,s \in \Z_{\geq 0}, 1 \leq r+s \leq |B|\}.
\end{equation}
    Consequently, the set~\eqref{eq:gen-set-A-20} at $B = [n]$ generates the defining ideal $I_{n}^{(2,0)}$.

        The (nonunital) algebra of diagonal super invariants
    $\C[\bm{x}_B, \bm{y}_B, \bm{\theta}_B]_+^{\mathfrak{S}_B}$ is generated by the polarized power sums
    \begin{equation}\label{eq:gen-set-A-21}
\{ p_{r,s,\varepsilon}(\bm{x}_B,\bm{y}_B,\bm{\theta}_B) \;|\;
        r,s \in \Z_{\geq 0},\ \varepsilon \in \{0,1\}, 1 \leq r+s+\varepsilon \leq |B| \}.
\end{equation}
    Consequently, the set~\eqref{eq:gen-set-A-21} at $B = [n]$ generates the defining ideal $I_{n}^{(2,1)}$.
\end{proposition}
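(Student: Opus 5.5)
The statement is Proposition~\ref{prop:ideal-generators}. I would prove the algebra statements first and then derive the ideal statements. The $\mathfrak{S}_B$-invariants of $\C[\bm{x}_B,\bm{y}_B,\bm{\theta}_B]$ are spanned by the symmetrizations $\cC_B(m)$ of monomials $m$, that is, by the monomial symmetric functions in the ``letters'' $z_\ell = (x_\ell,y_\ell,\theta_\ell)$, $\ell \in B$. Each letter contributes a monomial $x_\ell^{r}y_\ell^{s}\theta_\ell^{\varepsilon}$ with $\varepsilon\in\{0,1\}$.

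For the purely bosonic case, I would use the standard Newton/polarization argument. Consider the generating function $\prod_{\ell\in B}(1+ u\,x_\ell + v\,y_\ell + \cdots)$, or more cleanly the polarization operator route. Every diagonal invariant is a polynomial in the multisymmetric power sums $p_{r,s}(\bm{x}_B,\bm{y}_B)$ for all $r,s$. This holds because over $\Q$ the monomial multisymmetric functions are expressed via power sums by the usual inclusion--exclusion over set partitions of the letters: $\prod_j p_{a_j}$ expands as a sum over set partitions, and one inverts by triangularity in the partition refinement order. It then remains to reduce to $r+s \le |B|$. I would do this with the classical identity obtained from the vanishing of the elementary symmetric functions of degree $>|B|$ in the variables $\{x_\ell + u y_\ell\}$. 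Newton's identities express $p_k(\bm{x}+u\bm{y})$ for $k>|B|$ polynomially in the $p_j(\bm{x}+u\bm{y})$ with $j\le |B|$. Extracting coefficients of $u^s$ then expresses each $p_{r,s}$ with $r+s>|B|$ in terms of the lower ones. This is Weyl's theorem, which I may simply cite.

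For the super case, the same expansion works with signs. A monomial in the letters has fermionic part a product of distinct $\theta_\ell$. Its symmetrization is still, by the set-partition inclusion--exclusion, a rational combination of products of $p_{r,s,\varepsilon}$. Terms where two $\theta$'s would merge into one letter vanish since $\theta_\ell^2=0$, and this is consistent because those products are simply absent. For the degree reduction with $\varepsilon=1$, I would introduce an auxiliary odd parameter $\eta$ and apply the bosonic identity to $x_\ell + u y_\ell + \eta\theta_\ell$, which is even because $\eta\theta_\ell$ commutes. Taking the coefficient linear in $\eta$ expresses $p_{r,s,1}$ with $r+s+1>|B|$ via lower generators. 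This is the content of \cite[Theorem 17]{OrellanaZabrocki}, which I would cite at that point.

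The ideal statements at $B=[n]$ follow immediately. $I_n^{(k,\ell)}$ is generated by all positive-degree invariants, each of which is a polynomial without constant term in the listed power sums, and therefore lies in the ideal they generate. The main obstacle is the sign bookkeeping in the fermionic inclusion--exclusion; I would sidestep it by citing the Orellana--Zabrocki result.
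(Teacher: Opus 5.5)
Your proposal is correct and matches the paper, which gives no argument of its own: it cites Weyl's polarization theorem \cite{Weyl} for the bosonic case and \cite[Theorem 17]{OrellanaZabrocki} for the super case, as you also do. Your sketch (set-partition triangularity, Newton's identities for the commuting elements $x_\ell+uy_\ell$, and the odd parameter $\eta$ to extract the $\varepsilon=1$ reductions) is in fact a complete proof of those cited results, and the fermionic sign bookkeeping you flag is harmless, since the induction only needs the leading augmented monomial to appear with coefficient $1$ and all other terms to involve fewer letters.
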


The following lemma is a generalization of classical results on Vandermonde determinants and alternating polynomials (see \cite[Chapter I.3]{Macdonald}).

\begin{lemma}\label{lem:generalized-Vandermonde}
Let $\alpha \sqcup {\overline{\alpha}}$ partition $\{1,\ldots,n\}$. 
Let $\mathcal A \subseteq \C[\bm{x},\bm{y}]$ denote the space of polynomials that are $\mathfrak{S}_\alpha$-antisymmetric and $\mathfrak{S}_{\overline{\alpha}}$-symmetric.
Let $\mathcal A_\alpha \subseteq \C[\bm{x}_\alpha,\bm{y}_\alpha]$ denote the space of polynomials, in the ${\alpha}$-indexed variables only, that are $\mathfrak{S}_{\alpha}$-antisymmetric.
For a list $\gamma = \left((c_1,e_1), \ldots, (c_{|{\alpha}|}, e_{|{\alpha}|}) \right)$ of pairs of nonnegative integers, define a generalized Vandermonde determinant by
\begin{equation*}
    \delta_\gamma := \det(x_\ell^{c_m} y_\ell^{e_m})_{\ell \in {\alpha}, 1 \leq m \leq |{\alpha}|} = \sum_{\sigma \in \mathfrak{S}_{\alpha}} \sgn(\sigma)\prod_{m=1}^{|{\alpha}|} x_{\sigma(\ell_m)}^{c_m} y_{\sigma(\ell_m)}^{e_m},
\end{equation*}
where ${\alpha} = \{\ell_1 < \cdots < \ell_{|{\alpha}|}\} $ (rows are indexed by the indices in ${\alpha}$ in increasing order, and columns are indexed by $m \in \{1,\ldots,|{\alpha}|\}$ from the bidegrees $(c_m,e_m)$).
Then 
\begin{equation*}
    \mathcal A_{\alpha} = \Span_\C \{ \delta_\gamma : \gamma \in \left(\Z_{\geq 0}^2\right)^{|{\alpha}|} \},
\end{equation*}
and
\begin{equation*}
    \mathcal A = \Span_\C \{ \delta_\gamma P : \gamma \in \left(\Z_{\geq 0}^2\right)^{|{\alpha}|}, \ P \in \C[\bm{x}_{\overline{\alpha}}, \bm{y}_{\overline{\alpha}}]^{\mathfrak{S}_{\overline{\alpha}}} \}.
\end{equation*}
\end{lemma}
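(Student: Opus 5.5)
We prove the two equalities in order, treating the first as a classical fact extended to two alphabets and the second as a consequence of it.

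\textbf{The space $\mathcal A_\alpha$.} Inclusion $\supseteq$ is immediate. Each $\delta_\gamma$ is a signed sum over $\mathfrak{S}_\alpha$, so $\tau \delta_\gamma = \sgn(\tau)\delta_\gamma$ for every $\tau \in \mathfrak{S}_\alpha$.

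For $\subseteq$, we use the antisymmetrizer $\cY_\alpha$. If $f \in \mathcal A_\alpha$, then $f = \cY_\alpha(f)$. Expand $f = \sum_{\mathbf m} c_{\mathbf m}\, \mathbf m$ in monomials in the variables $\bm x_\alpha,\bm y_\alpha$. By linearity, $f = \sum_{\mathbf m} c_{\mathbf m}\cY_\alpha(\mathbf m)$. Any such monomial factors as
\[
\mathbf m = \prod_{m=1}^{|\alpha|} x_{\ell_m}^{c_m} y_{\ell_m}^{e_m}, \qquad \gamma = ((c_1,e_1),\dots,(c_{|\alpha|},e_{|\alpha|})).
\]
With this $\gamma$, the definition gives $|\alpha|!\,\cY_\alpha(\mathbf m) = \delta_\gamma$. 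Hence $f$ lies in the span of the $\delta_\gamma$. This is the Macdonald argument verbatim; the second alphabet only enlarges the index set of exponents.

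\textbf{The space $\mathcal A$.} Again $\supseteq$ is clear. The factor $\delta_\gamma$ involves only $\alpha$-variables, so it is $\mathfrak{S}_\alpha$-antisymmetric and $\mathfrak{S}_{\overline\alpha}$-invariant. The factor $P$ involves only $\overline\alpha$-variables, so it is $\mathfrak{S}_\alpha$-invariant and $\mathfrak{S}_{\overline\alpha}$-symmetric. The product therefore lies in $\mathcal A$.

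For $\subseteq$, we use the tensor decomposition
\[
\C[\bm x,\bm y] = \C[\bm x_\alpha,\bm y_\alpha]\otimes \C[\bm x_{\overline\alpha},\bm y_{\overline\alpha}].
\]
Here $\mathfrak{S}_\alpha\times\mathfrak{S}_{\overline\alpha}$ acts factorwise. Since $\mathfrak{S}_\alpha$ and $\mathfrak{S}_{\overline\alpha}$ commute, the idempotent $\cY_\alpha\cC_{\overline\alpha}$ factors as $\cY_\alpha\otimes\cC_{\overline\alpha}$. Its image is therefore the tensor product of the images:
\[
\mathcal A = \cY_\alpha\cC_{\overline\alpha}\,\C[\bm x,\bm y] = \cY_\alpha(\C[\bm x_\alpha,\bm y_\alpha])\otimes \cC_{\overline\alpha}(\C[\bm x_{\overline\alpha},\bm y_{\overline\alpha}]) = \mathcal A_\alpha\otimes \C[\bm x_{\overline\alpha},\bm y_{\overline\alpha}]^{\mathfrak{S}_{\overline\alpha}}.
\]

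To verify this concretely, take $f\in\mathcal A$ and write $f = \sum_j g_j h_j$, where $g_j$ runs over distinct monomials in the $\alpha$-variables and $h_j \in \C[\bm x_{\overline\alpha},\bm y_{\overline\alpha}]$. Apply $\cY_\alpha\cC_{\overline\alpha}$ to both sides; since $f$ is fixed by it,
\[
f = \sum_j \cY_\alpha(g_j)\,\cC_{\overline\alpha}(h_j).
\]
By the first part, each $\cY_\alpha(g_j)$ is a multiple of some $\delta_\gamma$. Each $\cC_{\overline\alpha}(h_j)$ is a symmetric polynomial $P$ in the $\overline\alpha$-variables. So $f$ lies in the claimed span.

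\textbf{Main obstacle.} The only step needing care is that $\cY_\alpha\cC_{\overline\alpha}$ acts factorwise on a product $g h$ with $g$ in the $\alpha$-variables and $h$ in the $\overline\alpha$-variables. This holds because $\mathfrak{S}_\alpha$ fixes $h$, $\mathfrak{S}_{\overline\alpha}$ fixes $g$, and $\mathfrak{S}_n$ acts by algebra automorphisms. It can be recorded in the same spirit as Lemma~\ref{lem:Y_to_C}. Everything else is routine.
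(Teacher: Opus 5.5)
Your proof is correct and follows the paper's argument. Both identify $\mathcal A_\alpha$ as the span of antisymmetrized monomials, each of which is $|\alpha|!\,\cY_\alpha$ of a monomial and hence a $\delta_\gamma$ by the Leibniz formula, and both obtain $\mathcal A$ from the factorization $\C[\bm x,\bm y]=\C[\bm x_\alpha,\bm y_\alpha]\otimes\C[\bm x_{\overline\alpha},\bm y_{\overline\alpha}]$ with commuting actions; your explicit check that $\cY_\alpha\cC_{\overline\alpha}(gh)=\cY_\alpha(g)\,\cC_{\overline\alpha}(h)$ simply spells out a step the paper states in one line.
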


\begin{proof}
    First, $\C[\bm{x}, \bm{y}] = \C[\bm{x}_{\alpha}, \bm{y}_{\alpha}] \otimes_\C \C[\bm{x}_{\overline{\alpha}}, \bm{y}_{\overline{\alpha}}]$ and the actions of $\mathfrak{S}_{\alpha}$ and $\mathfrak{S}_{\overline{\alpha}}$ commute. 
    Thus $\mathcal A$ decomposes as tensors of $\mathfrak{S}_{\alpha}$-antisymmetric polynomials with $\mathfrak{S}_{\overline{\alpha}}$-symmetric polynomials.

    It remains to identify the span of $\mathfrak{S}_{\alpha}$-antisymmetric polynomials $\mathcal{A}_{\alpha}$ with the span of the determinants $\delta_\gamma$.
    The monomials $\prod_{\ell \in {\alpha}} x_\ell^{a_\ell} y_\ell^{b_\ell}$ form a basis of $\C[\bm{x}_{\alpha},\bm{y}_{\alpha}]$, which $\mathfrak{S}_{\alpha}$ permutes.
    Thus the antisymmetrizations $\cY_{\alpha} \prod_{\ell \in {\alpha}} x_\ell^{a_\ell} y_\ell^{b_\ell}$ span $\mathcal{A}_{\alpha}$.
    Fix a monomial $\prod_{\ell \in {\alpha}} x_\ell^{a_\ell} y_\ell^{b_\ell}$ and let $\gamma$ be the list of bidegrees $(a_{\ell_1},b_{\ell_1}),\ldots, (a_{\ell_{|{\alpha}|}},b_{\ell_{|{\alpha}|}})$, where $\ell_1 < \cdots < \ell_{|{\alpha}|}$.
    By antisymmetrizing and using the Leibniz formula for the determinant, we have
    \begin{align}\label{eq:determinants}
        \sum_{\sigma \in \mathfrak{S}_{\alpha}} \sgn(\sigma) \sigma \left(\prod_{m=1}^{|{\alpha}|} x_{\ell_m}^{a_{\ell_m}}y_{\ell_m}^{b_{\ell_m}} \right) &= \sum_{\sigma \in \mathfrak{S}_{\alpha}} \sgn(\sigma) \prod_{m=1}^{|{\alpha}|} x_{\sigma(\ell_m)}^{a_{\ell_m}} y_{\sigma(\ell_m)}^{b_{\ell_m}} \\
        &= \det (x_\ell^{c_m} y_\ell^{e_m})_{\ell \in {\alpha}, 1 \leq m \leq |{\alpha}|} = \delta_\gamma. \nonumber 
    \end{align}
    Here, in the determinant, the row index becomes $\ell \in {\alpha}$ and the column index $m$ is associated to the bidegree $(c_m,e_m) = (a_{\ell_m}, b_{\ell_m})$.
    Thus each such antisymmetrized monomial is some $\delta_\gamma$, and conversely, each $\delta_\gamma$ is also by Equation~\eqref{eq:determinants} the antisymmetrization of a monomial. 
\end{proof}

We continue with technical lemmas that will be used to show harmonicity of the lift.
\begin{lemma}\label{lem:ideal-membership} Let $i \in \overline{\beta}$, let $\alpha = \beta \cup \{i\}$, and let $r,s \geq 0$ with $r+s \geq 1$. 
Then for any $A \in \cY_\alpha \cC_{\overline{\alpha}}(\C[\bm{x},\bm{y}])$, 
    \begin{equation*}\label{eq:claim-ideal-membership}
        \cC_{\overline{\beta}}(x_i^r y_i^s A) \in I_n^{(2,0)}. 
    \end{equation*}
    Thus, $\left\langle f, \cC_{\overline{\beta}}(x_i^r y_i^s A)\right\rangle_A =0$ for all $f \in H_n^{(2,0)}$.
\end{lemma}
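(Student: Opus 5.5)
The plan is to produce $\cC_{\overline{\beta}}(x_i^r y_i^s A)$ explicitly as a combination of polarized power sums $p_{r',s'}(\bm x,\bm y)$ times polynomials, so that it lies in $I_n^{(2,0)}$ by Proposition~\ref{prop:ideal-generators}. The orthogonality statement then follows at once, since $(I_n^{(2,0)})^\perp = H_n^{(2,0)}$ under the apolar form~\eqref{eq:Rota-apolar-form}. Both sides of the claim are linear in $A$, so by Lemma~\ref{lem:generalized-Vandermonde} (applied with the partition $\alpha \sqcup \overline{\alpha}$) we may assume $A = \delta_\gamma P$. Here $\delta_\gamma$ is a generalized Vandermonde determinant in the variables indexed by $\alpha = \beta\cup\{i\}$, and $P \in \C[\bm x_{\overline\alpha},\bm y_{\overline\alpha}]^{\S_{\overline\alpha}}$.

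The first key step is the decomposition
\[
x_i^r y_i^s A \;=\; p_{r,s}(\bm x,\bm y)\,A \;-\; \sum_{\ell\in\beta} x_\ell^r y_\ell^s A \;-\; \sum_{j\in\overline\alpha} x_j^r y_j^s A .
\]
The first term lies in the ideal, and it stays there after applying $\cC_{\overline\beta}$ because $p_{r,s}$ is fully symmetric and $\cC_{\overline\beta}$ is $I_n^{(2,0)}$-linear on symmetric multipliers. For the third sum we use that $\overline\beta = \overline\alpha\cup\{i\}$, so each transposition $(i\,j)$ with $j\in\overline\alpha$ lies in $\S_{\overline\beta}$. Hence $\cC_{\overline\beta}(x_j^r y_j^s A) = \cC_{\overline\beta}(x_i^r y_i^s\,(i\,j)A)$. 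This turns the identity into a linear relation among the quantities $F(A') := \cC_{\overline\beta}(x_i^r y_i^s A')$ for the various $A'$ in the $\S_{\overline\beta}$-orbit of $A$. The second sum should be handled by expanding $\delta_\gamma$ along the row indexed by $i$, writing
\[
\delta_\gamma = \sum_m \pm\, x_i^{c_m}y_i^{e_m}\,\delta'_m ,
\]
where each minor $\delta'_m$ is $\S_\beta$-antisymmetric in the $\beta$-variables and therefore unaffected by $\cC_{\overline\beta}$. The products $x_\ell^r y_\ell^s\,\delta_\gamma$ for $\ell\in\beta$ can then be recombined: $\sum_{\ell\in\alpha} x_\ell^ry_\ell^s\,\delta_\gamma$ is again $\S_\alpha$-antisymmetric and is a sum of determinants $\delta_{\gamma'}$ with one column shifted by $(r,s)$.

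I would then run an induction, on total degree or on $r+s$, to show that $F(\delta_\gamma P)\in I_n^{(2,0)}$ for all $\gamma,P$ simultaneously. The induction would use the relation above together with the observation that $\cC_{\overline\beta}$ of an $\S_\alpha$-antisymmetric, $\S_{\overline\alpha}$-symmetric polynomial equals $\cC_{\overline\beta}$ applied to an average over the row expansions, so that everything reduces to symmetric functions in the $\overline\beta$-variables times $\beta$-antisymmetric minors. In the base case, $P$ is constant and the row of $i$ in $\delta_\gamma$ absorbs $x_i^ry_i^s$. Finally we apply the adjointness: for $f\in H_n^{(2,0)}$ and $g\in I_n^{(2,0)}$, $\langle f,g\rangle_A = 0$.

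The main obstacle is the middle step: controlling $\cC_{\overline\beta}\bigl(\sum_{\ell\in\beta}x_\ell^ry_\ell^sA\bigr)$, equivalently showing that the relation among the $F(A')$ closes up and can be solved for $F(A)$. My first attempt would be to symmetrize the decomposition over $\S_{\overline\beta}$ and sum over $i\in\overline\beta$. This gives the identity
\[
d\cdot F(A) \equiv -\,\cC_{\overline\beta}\Bigl(\sum_{\ell\in\beta}x_\ell^ry_\ell^s A\Bigr) - (\text{terms } F((i\,j)A)) \pmod{I_n^{(2,0)}},
\]
and I would try to show that the $\beta$-sum is congruent to $\cC_{\overline\beta}$ of the analogous $\alpha$-sum, which is a new $\S_\alpha$-antisymmetric element of lower complexity in the induction.
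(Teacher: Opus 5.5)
Your reduction to $A=\delta_\gamma P$ and the final orthogonality step are fine. The gap is the step that actually puts $\cC_{\overline\beta}(x_i^ry_i^sA)$ into $I_n^{(2,0)}$: it is missing, and your proposed mechanism cannot supply it.

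Write $F(A')=\cC_{\overline\beta}(x_i^ry_i^sA')$. Your decomposition is just $p_{r,s}(\bm x,\bm y)=\sum_{k\in[n]}x_k^ry_k^s$ multiplied by $A$. Since $F((i\,j)A)=\cC_{\overline\beta}(x_j^ry_j^sA)$, modulo the ideal it only records that $p_{r,s}(\bm x,\bm y)\,\cC_{\overline\beta}(A)\in I_n^{(2,0)}$. It cannot be solved for $F(A)$. The terms $F((i\,j)A)$, $j\in\overline\alpha$, are not covered by an inductive hypothesis of the form ``$F(\delta_\gamma P)\in I_n^{(2,0)}$'', because $(i\,j)A$ is alternating in $\beta\cup\{j\}$, not in $\alpha$. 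In general they are not in the ideal at all. For instance, take $n=3$, $d=2$, $\beta=\{1\}$, $i=2$, $A=x_1-x_2$ and $(r,s)=(1,0)$. Then $F((2\,3)A)\equiv-\tfrac{3}{2}x_1^2\not\equiv 0$, and it is cancelled only by the equally nonzero $\beta$-term $\cC_{\overline\beta}(x_1A)\equiv\tfrac{3}{2}x_1^2$.

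Your proposed way out is circular. One has $\cC_{\overline\beta}\bigl(\sum_{\ell\in\alpha}x_\ell^ry_\ell^sA\bigr)-\cC_{\overline\beta}\bigl(\sum_{\ell\in\beta}x_\ell^ry_\ell^sA\bigr)=F(A)$ exactly, so proving these two congruent is the lemma itself. The $\alpha$-sum also has larger degree than $A$, so nothing decreases in the induction. Finally, the ``base case'' with $P$ constant is not a base case but the entire content of the lemma. Absorbing $x_i^ry_i^s$ into row $i$ gives a non-alternating determinant with no visible reason to lie in the ideal.

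The missing idea, which is how the paper argues, is to combine the prefactor with the entries of the moving row rather than splitting off $p_{r,s}$.

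First eliminate $P$. Write it as a polynomial without constant term in the $p_{c,e}(\bm x_{\overline\alpha},\bm y_{\overline\alpha})$, and use $p_{c,e}(\bm x_{\overline\alpha},\bm y_{\overline\alpha})\equiv-p_{c,e}(\bm x_\alpha,\bm y_\alpha)$. Then $A$ is congruent to an $\S_\alpha$-alternating polynomial in the $\alpha$-variables alone, hence to a combination of $\delta_\gamma$'s. Since $I_n^{(2,0)}$ is stable under multiplication and under $\cC_{\overline\beta}$, you may take $A=\delta_\gamma$.

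Then $\cC_{\overline\beta}(x_i^ry_i^s\delta_\gamma)=\frac{1}{d}\sum_{k\in\overline\beta}x_k^ry_k^s\delta_\gamma^{(k)}$, where $\delta_\gamma^{(k)}$ is $\delta_\gamma$ with $x_i,y_i$ renamed $x_k,y_k$. Laplace expansion along that row turns this into $\frac{1}{d}\sum_m\eta_mM_m\,p_{c_m+r,e_m+s}(\bm x_{\overline\beta},\bm y_{\overline\beta})$. Here $(c_m,e_m)$ are the column bidegrees, $\eta_m$ the cofactor signs, and $M_m$ are minors in the $\beta$-variables, independent of $k$.

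Because $r+s\ge 1$, these power sums have positive degree. So modulo $I_n^{(2,0)}$ the expression equals $-\frac{1}{d}\sum_{\ell\in\beta}x_\ell^ry_\ell^s\sum_m\eta_m x_\ell^{c_m}y_\ell^{e_m}M_m$. Each inner sum is a determinant with two equal rows, hence zero.

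Your remark about row expansions and $\beta$-antisymmetric minors points in this direction. But it lacks the substitution $p_{c_m+r,e_m+s}(\bm x_{\overline\beta},\bm y_{\overline\beta})\equiv-p_{c_m+r,e_m+s}(\bm x_\beta,\bm y_\beta)$ and the repeated-row cancellation, and without them nothing in the proposal produces ideal membership.
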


\begin{proof}
    By Lemma~\ref{lem:generalized-Vandermonde}, the space of $\mathfrak{S}_{\alpha}$-antisymmetric and $\mathfrak{S}_{\overline{\alpha}}$-symmetric polynomials in $\C[\bm{x},\bm{y}]$ is spanned by products $\delta_\gamma P$,
    where $\delta_\gamma$ 
    is a generalized Vandermonde determinant 
    and $P \in \C[\bm{x}_{\overline{\alpha}},\bm{y}_{\overline{\alpha}}]^{\mathfrak{S}_{\overline{\alpha}}}$.
    By linearity, it suffices to address $A = \delta_\gamma (P+ m)$ with $P \in \C[\bm{x}_{\overline{\alpha}},\bm{y}_{\overline{\alpha}}]^{\mathfrak{S}_{\overline{\alpha}}}_+$ and $m \in \C$. 

    By Proposition~\ref{prop:ideal-generators}, the algebra $\C[\bm{x}_{\overline{\alpha}}, \bm{y}_{\overline{\alpha}}]_+^{\mathfrak{S}_{\overline{\alpha}}}$ is generated by the polarized power sums $p_{c,e}(\bm{x}_{\overline{\alpha}}, \bm{y}_{\overline{\alpha}})$ with $c + e \geq 1$.
    Thus $P = F\big(p_{c,e}(\bm{x}_{\overline{\alpha}}, \bm{y}_{\overline{\alpha}})\big)$ for some polynomial $F$ in finitely many variables with zero constant term.
    Because $\alpha$ and $\overline{\alpha}$ partition $[n]$, and $p_{c,e}(\bm{x},\bm{y}) \in I_n^{(2,0)}$ for $c+e \geq 1$, 
    \[p_{c,e}(\bm{x}_{\overline{\alpha}},\bm{y}_{\overline{\alpha}}) = p_{c,e}(\bm{x},\bm{y}) - p_{c,e}(\bm{x}_{\alpha},\bm{y}_{\alpha})
\equiv - p_{c,e}(\bm{x}_{\alpha},\bm{y}_{\alpha}) \pmod {I_n^{(2,0)}}.\]
Set $\widetilde{P} := F\big({-p_{c,e}(\bm{x}_{\alpha}, \bm{y}_{\alpha})}\big) \in \C[\bm{x}_{\alpha}, \bm{y}_{\alpha}]^{\mathfrak{S}_{\alpha}}_+$.
Since $I_n^{(2,0)}$ is an ideal, $P \equiv \widetilde{P} \pmod{I_n^{(2,0)}}$; hence
\[ A = \delta_\gamma (P+m) \equiv \delta_\gamma (\widetilde{P} +m) \pmod {I_n^{(2,0)}}.\]

    Since $\cY_\alpha(\delta_\gamma) = \delta_\gamma$ and $\cC_{\alpha}(\widetilde{P}+m) = \widetilde{P}+m$ then $\cY_\alpha(\delta_\gamma (\widetilde{P}+m)) = \delta_\gamma (\widetilde{P} +m)\in \mathcal{A}_\alpha$. 
    Thus by Lemma~\ref{lem:generalized-Vandermonde}, $\delta_\gamma (\widetilde{P} +m)$ is a $\C$-linear combination of generalized Vandermonde determinants $\delta_{\gamma'}$ (for various $\gamma'$).
    Since $I_n^{(2,0)}$ is an ideal closed under the action of $\mathfrak{S}_n$, then $x_i^r y_i^s \cdot I_n^{(2,0)} \subseteq I_n^{(2,0)}$, and $\cC_{\overline{\beta}}(I_n^{(2,0)}) \subseteq I_n^{(2,0)}$, hence
    \begin{equation*}
        \cC_{\overline{\beta}}(x_i^r y_i^s A) \equiv \cC_{\overline{\beta}}(x_i^r y_i^s \delta_\gamma (\widetilde{P}+m)) \pmod {I_n^{(2,0)}},
    \end{equation*}
    and by linearity it suffices to prove that $\cC_{\overline{\beta}}(x_i^r y_i^s \delta_\gamma) \in I_n^{(2,0)} $ for all generalized Vandermonde determinants $\delta_\gamma$.    
    Order the rows of $\delta_\gamma$ (from Lemma~\ref{lem:generalized-Vandermonde}) as follows:
\[
\delta_\gamma = \det [ x_1^\bullet y_1^\bullet, \dots , x_{n-d}^\bullet y_{n-d}^{\bullet}, x_i^\bullet y_i^\bullet]^T.
\]
Then any $\xi \in \mathfrak{S}_{\overline{\beta}}$ with $\xi(i) = k$ acts on $\delta_\gamma$ by replacing each $x_i,y_i$ with $x_k,y_k$, while keeping the exponents and other variables unchanged. Since $\delta_\gamma \in \C[\bm{x}_\alpha, \bm{y}_\alpha]$ with $\overline{\beta} = \overline{\alpha} \cup \{i\}$, this action depends only on the value of $k \in \overline{\beta}$, so we write $\delta_\gamma^{(k)} := \xi \delta_\gamma$ where $\delta_\gamma^{(i)} = \delta_\gamma$. Hence, 
    \begin{align*}
        \cC_{\overline{\beta}}(x_i^r y_i^s \delta_\gamma) 
        &= \frac{1}{d!} \sum_{\sigma \in \S_{\overline{\alpha}}} \left(
        \sigma(x_i^r y_i^s \delta_\gamma) + \sum_{k \in \overline{\alpha}} (i,k)\sigma(x_i^r y_i^s \delta_\gamma) \right) 
        = \frac{1}{d!} \sum_{\sigma \in \S_{\overline{\alpha}}} \left(
        x_i^r y_i^s \delta_\gamma + \sum_{k \in \overline{\alpha}}  x_k^r y_k^s \delta_\gamma^{(k)} \right) \\
        &= \frac{(d-1)!}{d!} \left(
        x_i^r y_i^s \delta_\gamma^{(i)} + \sum_{k \in \overline{\alpha}}  x_k^r y_k^s \delta_\gamma^{(k)} \right) 
        = \frac{1}{d}  \sum_{k \in \overline{\beta}}  x_k^r y_k^s \delta_\gamma^{(k)} .
    \end{align*}

    Next, perform Laplace expansion of $\delta_\gamma^{(k)}$ along its last row, with signs $\eta_m = (-1)^{n-d+1+m}$ and matrix minors $M_m = \det(x_\ell^{c_{m'}}y_\ell^{e_{m'}})_{\ell \in \beta,\ m' \neq m}$ (each of which has no dependence on $k$), to obtain
    \begin{equation}\label{eq:WTS-in-ideal}
    \begin{aligned}
        \sum_{k \in {\overline{\beta}}} x_k^r y_k^s \delta_\gamma^{(k)} &= \sum_{m=1}^{|{\alpha}|} \eta_m M_m p_{c_m+r, e_m+s}(\bm{x}_{\overline{\beta}},\bm{y}_{\overline{\beta}})\\
        &= \sum_{m=1}^{|{\alpha}|} \eta_m M_m p_{c_m+r, e_m+s}(\bm{x},\bm{y}) - \sum_{m=1}^{|{\alpha}|} \eta_m M_m p_{c_m+r, e_m+s}(\bm{x}_\beta,\bm{y}_\beta)  \\
        &\equiv - \sum_{m=1}^{|{\alpha}|} \eta_m M_m p_{c_m+r, e_m+s}(\bm{x}_\beta,\bm{y}_\beta) \pmod {I_n^{(2,0)}}, 
    \end{aligned}
    \end{equation}
    where the last line follows because $c_m+r + e_m+s \geq r+s \geq 1$ by hypothesis, so each $p_{c_m+r, e_m+s}(\bm{x},\bm{y}) \in I_n^{(2,0)}$.
    Then we write
    \begin{equation}\label{eq:Laplace-expansion}
        \sum_{m=1}^{|{\alpha}|} \eta_m M_m p_{c_m+r, e_m+s}(\bm{x}_\beta,\bm{y}_\beta) = \sum_{\ell \in \beta} x_\ell^{r} y_\ell^{s} \sum_{m=1}^{|{\alpha}|} \eta_m x_\ell^{c_m} y_\ell^{e_m} M_m. 
    \end{equation}
    Finally, for each $\ell \in \beta$, observe that the sum $\sum_{m=1}^{|{\alpha}|} \eta_m x_\ell^{c_m} y_\ell^{e_m} M_m$ is (the Laplace expansion along the bottom row of) a determinant of the matrix obtained from $\delta_\gamma$ by replacing the bottom row with row $\ell$. 
    Then since this matrix has two repeated rows, its determinant is $0$. 
    Hence Equation~\eqref{eq:Laplace-expansion} is equal to $0$, so the right-hand side of Equation~\eqref{eq:WTS-in-ideal} lies in $I_n^{(2,0)}$, and thus so does $\cC_{\overline{\beta}}(x_i^r y_i^s \delta_\gamma)$.
\end{proof}

\begin{lemma}\label{lem:key}
    Let $f \in G_\beta$, let $i \in \overline{\beta}$, and let $r,s \geq 0$  with $r+s \geq 1$. Then
    \begin{equation*}
        \cY_{\beta \cup \{i\}} (\partial_{x_i}^r \partial_{y_i}^s f) = 0.
    \end{equation*}
\end{lemma}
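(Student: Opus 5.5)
The plan is to prove the vanishing by duality with respect to Rota's apolar form $\left\langle \cdot,\cdot \right\rangle_A$ from \eqref{eq:Rota-apolar-form}, using Lemma~\ref{lem:ideal-membership} as the main input. Set $\alpha = \beta \cup \{i\}$, so that $\overline{\alpha} = \overline{\beta}\setminus\{i\}$. Since $\left\langle \cdot,\cdot \right\rangle_A$ is nondegenerate on $\C[\bm{x},\bm{y}]$, it suffices to show that
\[
\left\langle h, \cY_\alpha(\partial_{x_i}^r\partial_{y_i}^s f)\right\rangle_A = 0 \qquad \text{for every } h \in \C[\bm{x},\bm{y}].
\]

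First I move all operators to the left. Since $\cY_\alpha$ is self-adjoint and differentiation is adjoint to multiplication, we get
\[
\left\langle h, \cY_\alpha(\partial_{x_i}^r\partial_{y_i}^s f)\right\rangle_A = \left\langle x_i^r y_i^s\, \cY_\alpha(h), f\right\rangle_A .
\]
Next I use the symmetry of $f$. Since $f \in G_\beta$, we have $\cC_{\overline{\beta}}(f) = f$. The symmetrizer $\cC_{\overline{\beta}}$ is also self-adjoint, because the form is $\S_n$-invariant and $\S_{\overline{\beta}}$ is closed under inverses. Hence
\[
\left\langle x_i^r y_i^s\, \cY_\alpha(h), f\right\rangle_A = \left\langle \cC_{\overline{\beta}}\bigl(x_i^r y_i^s\, \cY_\alpha(h)\bigr), f\right\rangle_A .
\]

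Now I put the argument of $\cC_{\overline{\beta}}$ into the form required by Lemma~\ref{lem:ideal-membership}. Because $\S_{\overline{\alpha}} \subseteq \S_{\overline{\beta}}$, we have $\cC_{\overline{\beta}} = \cC_{\overline{\beta}}\cC_{\overline{\alpha}}$. Because $i \notin \overline{\alpha}$, $\cC_{\overline{\alpha}}$ commutes with multiplication by $x_i^r y_i^s$. Because $\alpha \cap \overline{\alpha} = \emptyset$, $\cC_{\overline{\alpha}}$ also commutes with $\cY_\alpha$. Combining these,
\[
\cC_{\overline{\beta}}\bigl(x_i^r y_i^s\, \cY_\alpha(h)\bigr) = \cC_{\overline{\beta}}\bigl(x_i^r y_i^s A\bigr), \qquad A := \cY_\alpha\cC_{\overline{\alpha}}(h) \in \cY_\alpha \cC_{\overline{\alpha}}(\C[\bm{x},\bm{y}]).
\]
With $r+s\geq 1$, Lemma~\ref{lem:ideal-membership} applies and says that $\cC_{\overline{\beta}}(x_i^r y_i^s A)$ lies in $I_n^{(2,0)}$. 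This ideal is orthogonal to $H_n^{(2,0)} \ni f$, so the pairing vanishes for every $h$, and the lemma follows.

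The main obstacle has already been handled in Lemma~\ref{lem:ideal-membership}, namely the Laplace-expansion argument showing that symmetrizing over $\overline{\beta}$ lands in the ideal. What remains is bookkeeping. The only points needing care are the self-adjointness of $\cC_{\overline{\beta}}$ and the commutation of $\cC_{\overline{\alpha}}$ with both $\cY_\alpha$ and multiplication by $x_i^r y_i^s$. These are the facts that let us replace the arbitrary $\cY_\alpha(h)$ by an element of $\cY_\alpha\cC_{\overline{\alpha}}(\C[\bm{x},\bm{y}])$.
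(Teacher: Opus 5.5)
Your proposal is correct and follows essentially the same route as the paper: pair against an arbitrary test polynomial via the apolar form, absorb the symmetrizers using $\cC_{\overline{\beta}}(f)=f$, and rewrite the test vector as $\cC_{\overline{\beta}}(x_i^r y_i^s A)$ with $A=\cY_\alpha\cC_{\overline{\alpha}}(h)$, so that Lemma~\ref{lem:ideal-membership} finishes the argument. The only cosmetic difference is how $\cC_{\overline{\alpha}}$ enters. You use $\cC_{\overline{\beta}}=\cC_{\overline{\beta}}\cC_{\overline{\alpha}}$, whereas the paper first uses $\cC_{\overline{\alpha}}(f)=f$ and then $\cC_{\overline{\beta}}(f)=f$.
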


\begin{proof}
Fix $i \in \overline{\beta}$ and write $\alpha = \beta \cup \{i\}$ and $\overline{\alpha}= [n] \setminus \alpha$.
Since the apolar form is nondegenerate, to prove that this element is $0$, it suffices to show that it is orthogonal to every polynomial $g \in \C[\bm{x},\bm{y}]$. Moreover, since $f \in G_\beta$, then $\cC_{\overline{\beta}}(f) = f$ so that $\cC_{\overline{\alpha}}(f) = f$. Thus, we compute 
\begin{align*}
\left\langle \cY_\alpha(\partial_{x_i}^r \partial_{y_i}^s f), g\right\rangle_A 
&= \left\langle  f, {x_i}^r{y_i}^s\cY_\alpha(g)\right\rangle_A 
= \left\langle  \cC_{\overline{\alpha}}(f), {x_i}^r{y_i}^s\cY_\alpha(g)\right\rangle_A\\
&= \left\langle  f, \cC_{\overline{\alpha}}({x_i}^r{y_i}^s\cY_\alpha(g))\right\rangle_A 
= \left\langle  f, {x_i}^r{y_i}^s \cC_{\overline{\alpha}}(\cY_\alpha(g))\right\rangle_A\\
&= \left\langle \cC_{\overline{\beta}}(f), {x_i}^r{y_i}^s \cC_{\overline{\alpha}}(\cY_\alpha(g))\right\rangle_A
= \left\langle f,  \cC_{\overline{\beta}}({x_i}^r{y_i}^s \cC_{\overline{\alpha}}(\cY_\alpha(g)))\right\rangle_A.
\end{align*}
By Lemma~\ref{lem:ideal-membership} the result follows.
\end{proof}
  
We define the following function which enables us to construct harmonic elements in $H_n^{(2,1)}$ from those in $\widetilde{G}_\beta$.

\begin{definition}
For each $0\leq d \leq n-1$, define the maps $\Phi_d: \widetilde{G}_{\beta_d}\to H_n^{(2,1)}$, sending each $f \in  \widetilde{G}_{\beta_d}$ to the element
\begin{equation*}
    \Phi_d(f) := \cY_{[n]}(\theta_{\overline{\beta}_d} f),
\end{equation*}
where $\theta_{\overline{\beta_d}} := \theta_{n-d+1} \cdots \theta_n$ denotes the partial product of the fermionic variables. Set $\Phi := \bigoplus_{d=0}^{n-1} \Phi_d$.
\end{definition}

\begin{remark}The lift $\Phi$ may be compared with the explicit isomorphism of Gorsky--Mellit \cite{GorskyMellit}, which relates hook components of $R_n^{(2,0)}$ to sign components of $R_n^{(2,0)} \otimes R_n^{(0,1)}$; we further pass to the quotient $R_n^{(2,1)}$.
\end{remark} 

We now prove that the lift $\Phi_d(f)$ is harmonic.

\begin{theorem}\label{thm:harmonic-2-1}
    For every $f \in \widetilde{G}_{\beta_d}$, we have that $\Phi_d(f) = \cY_{[n]} (\theta_{\overline{\beta_d}} f) \in H_n^{(2,1)}$. Thus, $\Phi_d$ is well-defined.
\end{theorem}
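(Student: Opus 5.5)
The plan is to verify directly that $\Phi_d(f)$ is annihilated by every operator $D_{r,s,\varepsilon}$ with $r+s+\varepsilon\geq 1$. By Lemma~\ref{lem:commuting-operators}, $D_{r,s,\varepsilon}\cY_{[n]}=\cY_{[n]}D_{r,s,\varepsilon}$. So it suffices to show
\[
\cY_{[n]}\bigl(D_{r,s,\varepsilon}(\theta_{\overline{\beta}}f)\bigr)=0 .
\]
Since $\partial_{\theta_i}^\varepsilon=0$ for $\varepsilon\geq 2$, only $\varepsilon\in\{0,1\}$ needs checking.

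\emph{Case $\varepsilon=0$.} Here $r+s\geq 1$. The bosonic derivatives commute with $\theta_{\overline{\beta}}$, so $D_{r,s,0}(\theta_{\overline{\beta}}f)=\theta_{\overline{\beta}}\,D_{r,s,0}(f)$. This vanishes because $f\in \widetilde G_\beta\subseteq H_n^{(2,0)}$.

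\emph{Case $\varepsilon=1$.} Now $r,s\geq 0$ are arbitrary. Only indices $i\in\overline{\beta}$ contribute, since $\partial_{\theta_i}\theta_{\overline{\beta}}=0$ for $i\in\beta$. This gives
\[
D_{r,s,1}(\theta_{\overline{\beta}}f)=\sum_{i\in\overline{\beta}}\pm\,\theta_{\overline{\beta}\setminus\{i\}}\;\partial_{x_i}^r\partial_{y_i}^s f .
\]
When $d=0$ the set $\overline{\beta}$ is empty, so the sum is empty and there is nothing to prove. For $d\geq 1$, fix $i\in\overline{\beta}$ and set $\alpha=\beta\cup\{i\}$. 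Since $\mathfrak S_\alpha\subseteq\mathfrak S_{[n]}$, we have $\cY_{[n]}=\cY_{[n]}\cY_\alpha$. The monomial $\theta_{\overline{\beta}\setminus\{i\}}$ involves only indices in $\overline{\alpha}$, so it is fixed by $\mathfrak S_\alpha$. Hence
\[
\cY_\alpha\bigl(\theta_{\overline{\beta}\setminus\{i\}}\,g\bigr)=\theta_{\overline{\beta}\setminus\{i\}}\,\cY_\alpha(g)
\qquad\text{for } g=\partial_{x_i}^r\partial_{y_i}^s f .
\]
It therefore suffices to show $\cY_\alpha(\partial_{x_i}^r\partial_{y_i}^sf)=0$ for each $i\in\overline{\beta}$.

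\emph{Vanishing of $\cY_\alpha(\partial_{x_i}^r\partial_{y_i}^sf)$.} If $r+s\geq 1$, this is exactly Lemma~\ref{lem:key}, which applies because $f\in\widetilde G_\beta\subseteq G_\beta$. If $r=s=0$, the claim is $\cY_{\beta\cup\{i\}}(f)=0$, which holds by the definition of $\widetilde G_\beta$. Thus every summand is killed by $\cY_{[n]}$, so $D_{r,s,1}\Phi_d(f)=0$.

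Combining the two cases, $\Phi_d(f)\in H_n^{(2,1)}$. The real content is Lemma~\ref{lem:key}, which is already established. The only care needed here is tracking the fermionic signs in the expansion of $D_{r,s,1}$; these are irrelevant, since each summand vanishes individually.
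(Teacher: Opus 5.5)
Your proof is correct and follows the paper's argument. You reduce via Lemma~\ref{lem:commuting-operators} and handle $\varepsilon=0$ by harmonicity of $f$; for $\varepsilon=1$ you kill each summand $\theta_{\overline{\beta}\setminus\{i\}}\partial_{x_i}^r\partial_{y_i}^s f$ using Lemma~\ref{lem:key} when $r+s\geq 1$ and the defining condition of $\widetilde{G}_\beta$ when $r=s=0$. The only difference is cosmetic: you absorb $\cY_\alpha$ via $\cY_{[n]}=\cY_{[n]}\cY_\alpha$ and the $\mathfrak{S}_\alpha$-invariance of $\theta_{\overline{\beta}\setminus\{i\}}$, whereas the paper uses a coset factorization through $\cY_\alpha\cY_{\overline{\alpha}}$ together with Lemma~\ref{lem:Y_to_C}, which is slightly more machinery than needed.
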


\begin{proof} Fix $0\leq d \leq n-1$ and set $\beta = \beta_d$. 
    To prove that $\Phi_d(f) \in H_n^{(2,1)}$, from Equation~\eqref{eq:diag-super-coinv}, it suffices to verify that $D_{r,s,\varepsilon}(\Phi_d(f)) = 0$ for all $r+s+\varepsilon \geq 1$, where $\varepsilon \in \{0,1\}$.
    Recall also that by Lemma~\ref{lem:commuting-operators}, $D_{r,s,\varepsilon} \Phi_d(f) = \cY_{[n]}(D_{r,s,\varepsilon}(\theta_{\overline{\beta}} f))$.

    Consider the case where $\varepsilon=0$. 
    Since $\partial_{x_j}$ and $\partial_{y_j}$ commute with multiplication by $\theta_{\overline{\beta}}$ for any $j$, then by Equation~\eqref{eq:diag-coinv}, $f \in H_n^{(2,0)}$ implies $D_{r,s,0}(\theta_{\overline{\beta}} f) = \theta_{\overline{\beta}} D_{r,s,0}( f) = 0$.

    Now consider the case where $\varepsilon=1$.
    Since $\partial_{\theta_\ell}(\theta_{{\overline{\beta}}} f)=0$ for all $\ell \in \beta$, 
    then setting $\theta_{{\overline{\beta}} \setminus \{i\}}= \theta_{n-d+1} \dots \theta_{i-1} \theta_{i+1} \dots \theta_n$,
    we can write
    \begin{equation*}
        D_{r,s,1} (\theta_{\overline{\beta}} f) = 
        \sum_{i \in {\overline{\beta}}} (-1)^{i-n+d-1} \theta_{{\overline{\beta}} \setminus \{i\}} \partial_{x_i}^r \partial_{y_i}^s f.
    \end{equation*}
    Fix $i \in {\overline{\beta}}$, let $\alpha := \beta \cup \{i\}$ and $\overline{\alpha} := {\overline{\beta}} \setminus \{i\}$, and consider the coset factorization of $\cY_{[n]}$,
    \[\cY_{[n]} = \frac{(n-d+1)!(d-1)!}{n!}\left(\sum_{\tau \in  \S_n /  (\S_\alpha \times \S_{\overline{\alpha}})} \sgn(\tau) \tau \right) \cY_\alpha \cY_{\overline{\alpha}}
    \]
    over left coset representatives $\tau$ of $\mathfrak{S}_\alpha \times \mathfrak{S}_{\overline{\alpha}}$ in $\mathfrak{S}_n$. 
    Since the monomial $\theta_{\overline{\alpha}} $ is $\mathfrak{S}_{\overline{\alpha}}$-antisymmetric and commutes with the action of $\mathfrak{S}_\alpha$, then by Lemma~\ref{lem:key} and Lemma~\ref{lem:Y_to_C} for any $r+s \geq 1$ we have, 
    \[
\cY_\alpha \cY_{\overline{\alpha}} (\theta_{\overline{\alpha}} \partial_{x_i}^r \partial_{y_i}^s f) 
       =\theta_{\overline{\alpha}} \cY_\alpha\cC_{\overline{\alpha}}(\partial_{x_i}^r \partial_{y_i}^s f)
        = \theta_{\overline{\alpha}} \cC_{\overline{\alpha}}\cY_\alpha(\partial_{x_i}^r \partial_{y_i}^s f)=0.
    \]    
Consider when $r=s=0$. Since $f \in \widetilde{G}_\beta$ gives $\cY_\alpha(f) = 0$, then $\cY_\alpha\cY_{\overline{\alpha}}(\theta_{\overline{\alpha}}f)=\theta_{\overline{\alpha}}\cC_{\overline{\alpha}}\cY_\alpha(f) = 0$. 
    
    Writing these two cases together 
    and summing over all choices of $i \in \overline{\beta}$, we conclude that for any $r,s\geq 0$,
    \begin{align*}
        D_{r,s,1} \Phi_d
        (f) &= \frac{(n-d+1)!(d-1)!}{n!}\sum_{i\in {\overline{\beta}}}
        (-1)^{i-n+d-1}
        \left( \sum_\tau \sgn(\tau) \tau \right) \theta_{\overline{\alpha}} \cC_{\overline{\alpha}} \cY_\alpha (\partial_{x_i}^r \partial_{y_i}^s f) = 0.
    \end{align*}
\end{proof}

\begin{lemma}\label{lem:injectivity}
    The function $\Phi_d$ is an injective, $(\bm{x},\bm{y})$-bidegree preserving, linear map such that for each $f \in  \widetilde{G}_{\beta_d}$, the image $\Phi_d(f)$ is homogeneous of $\bm{\theta}$-degree $d$. Thus, $\Phi_d(\widetilde{G}_{\beta_d})$ is contained in the $\bm{\theta}$-degree $d$ component of $H_n^{(2,1)}$, 
    \[
\Phi_d: \widetilde{G}_{\beta_d} \hookrightarrow  \left(H_n^{(2,1)}\right)_{\deg_{\bm{\theta}} = d}.
    \]
\end{lemma}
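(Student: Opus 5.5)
Linearity of $\Phi_d$ is immediate, since multiplication by $\theta_{\overline{\beta}}$ and the operator $\cY_{[n]}$ are both linear. The action of $\S_n$ permutes the $\bm{x}$, $\bm{y}$, and $\bm{\theta}$ variables simultaneously, so it preserves the $(\bm{x},\bm{y})$-bidegree and the $\bm{\theta}$-degree of every monomial. If $f$ is bihomogeneous of bidegree $(a,b)$, then $\theta_{\overline{\beta}} f$ is homogeneous of degree $(a,b,d)$ because $|\overline{\beta}|=d$, and antisymmetrization keeps it in that degree. Since $\widetilde{G}_\beta$ is spanned by its bihomogeneous pieces (the projectors $\cY_\beta\cC_{\overline\beta}$ and $\cY_{\beta\cup\{i\}}$ respect bidegree), these facts extend to all of $\widetilde{G}_\beta$. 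Together with Theorem~\ref{thm:harmonic-2-1}, this puts the image in $(H_n^{(2,1)})_{\deg_{\bm\theta}=d}$.

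Injectivity is the main point. The plan is to expand $\cY_{[n]}(\theta_{\overline\beta} f)$ and extract the coefficient of the fermionic monomial $\theta_{\overline\beta}$. Write
\[
\Phi_d(f)=\frac{1}{n!}\sum_{\sigma\in\S_n}\sgn(\sigma)\,\sigma(\theta_{\overline\beta})\,\sigma(f).
\]
The factor $\sigma(\theta_{\overline\beta})=\theta_{\sigma(n-d+1)}\cdots\theta_{\sigma(n)}$ equals $\pm\theta_{\overline\beta}$ exactly when $\sigma$ stabilizes $\overline\beta$ setwise, that is, when $\sigma=(\sigma_1,\sigma_2)\in\S_\beta\times\S_{\overline\beta}$. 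In that case $\sigma(\theta_{\overline\beta})=\sgn(\sigma_2)\theta_{\overline\beta}$. Every other $\sigma$ produces a different $\theta$-monomial $\theta_J$ with $J\neq\overline\beta$.

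Since $\C[\bm x,\bm y,\bm\theta]$ is a free $\C[\bm x,\bm y]$-module with basis the $\theta_J$, the $\theta_{\overline\beta}$-coefficient of $\Phi_d(f)$ is
\[
\frac{1}{n!}\sum_{(\sigma_1,\sigma_2)}\sgn(\sigma_1)\sgn(\sigma_2)\sgn(\sigma_2)\,(\sigma_1,\sigma_2)f .
\]
Because $f\in G_\beta$, each term $\sgn(\sigma_1)(\sigma_1,\sigma_2)f$ equals $f$. The coefficient is therefore $\frac{(n-d)!\,d!}{n!}f$, a nonzero multiple of $f$.

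Hence $\Phi_d(f)=0$ forces $f=0$, which proves injectivity. The only care needed is the sign bookkeeping for $\sigma_2$ acting on the ordered product $\theta_{n-d+1}\cdots\theta_n$: reordering contributes exactly $\sgn(\sigma_2)$, which cancels against the $\sgn(\sigma_2)$ coming from $\sgn(\sigma)$. Note that injectivity does not use the tilde condition at all, only $f\in G_\beta$.
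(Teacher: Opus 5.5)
Your proof is correct and follows the paper's route: you extract the coefficient of $\theta_{\overline\beta}$, note that only $\sigma\in\S_\beta\times\S_{\overline\beta}$ contribute to it, and obtain $\frac{(n-d)!\,d!}{n!}f$ using only $f\in G_\beta$. The paper does the same sign bookkeeping through Lemma~\ref{lem:Y_to_C} and a coset factorization of $\cY_{[n]}$, while you do it term by term; the computation is the same.
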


\begin{proof}
As above, fix $0\leq d \leq n-1$ and set $\beta = \beta_d$. 
First observe that, for each $\sigma \in \S_n$, a term $\sigma(\theta_{\overline{\beta}} f)=\sigma(\theta_{\overline{\beta}})\sigma(f)$ contributes to the coefficient of $\theta_{\overline{\beta}}$
in $\Phi_d(f) = \cY_{[n]} (\theta_{\overline{\beta}} f)$
only when $\sigma(\theta_{\overline{\beta}}) = \pm \theta_{\overline{\beta}}$. Hence, it suffices to restrict our attention only to permutations $\sigma \in \mathfrak{S}_{n-d} \times \mathfrak{S}_{d} \cong \mathfrak{S}_\beta \times \mathfrak{S}_{\overline{\beta}}$.

Now, observe that since $f \in G_\beta$ and $\sigma(\theta_{\overline{\beta}} )= \sgn(\sigma)\theta_{\overline{\beta}} $ for all $\sigma \in \S_{\overline{\beta}}$, then by Lemma~\ref{lem:Y_to_C}:
\begin{align*}
\cY_\beta \cY_{\overline{\beta}} (\theta_{\overline{\beta}} f)
= \theta_{\overline{\beta}} \cY_\beta \cC_{\overline{\beta}} (f) 
= \theta_{\overline{\beta}} f.
\end{align*}
Hence, using the coset factorization of $\cY_{[n]}$ over left coset representatives $\tau$ of $\mathfrak{S}_\beta \times \mathfrak{S}_{\overline{\beta}}$ in $\mathfrak{S}_n$, we obtain
  \begin{align*}
  \frac{n!}{(n-d)!d!}\cY_{[n]}(\theta_{\overline{\beta}} f)
  = \left(\sum_{\tau \in  \S_n /(  \S_\beta \times \S_{\overline{\beta}})} \sgn(\tau) \tau \right)  \cY_{\overline{\beta}} \cY_\beta (\theta_{\overline{\beta}} f)
  = \left(\sum_{\tau \in  \S_n /(  \S_\beta \times \S_{\overline{\beta}})} \sgn(\tau) \tau \right)(\theta_{\overline{\beta}} f). 
    \end{align*} 
    Since the only terms in this sum that contribute to  $\theta_{\overline{\beta}}$ occur when $\tau$ is the identity, then it follows that the coefficient of $\theta_{\overline{\beta}}$ in $\cY_{[n]}(\theta_{\overline{\beta}} f)$ is precisely
    \begin{align*}
  \frac{(n-d)!d!}{n!} f. 
    \end{align*} 
    Since this coefficient is nonzero, then the linear map sending $f \mapsto \Phi_d(f)$ is injective.   
    Finally, $\Phi_d(f)$ has the same $(\bm{x},\bm{y})$ bidegree as $f$, and is homogeneous of $\bm{\theta}$-degree $|{\overline{\beta}}| =d$ by construction.
\end{proof}

\subsection{The main theorem}\label{subsec:main theorem}

For an $\mathfrak{S}_n$-module $M$, write $M_{\sgn}$ for its sign-isotypic component, so that $M_{\sgn} \cong \epsilon \otimes \Hom_{\mathfrak{S}_n} (\epsilon, M)$ with graded dimension equal to $\dim(\Hom_{\mathfrak{S}_n} (\epsilon, M))$. 
We are now able to prove the first part of our main result.

\begin{theorem}\label{thm:Phi-injection}
For any $n\geq 1$, $\Phi$ induces a grading-preserving, injective, linear map
\[
\Hom_{\mathfrak{S}_n}\left(\epsilon, H_n^{(2,0)} \otimes H_n^{(0,1)}\right) \hookrightarrow \Hom_{\S_n}\left(\epsilon, H_n^{(2,1)}\right).
\]
\end{theorem}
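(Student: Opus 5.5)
The plan is to assemble the map degree by degree in $\bm\theta$ and then sum. First, identify the source. By Equation~\eqref{eqn:hook-sign-iso} (the Schur's lemma chain) applied to $M=H_n^{(2,0)}$, together with Proposition~\ref{prop:H-R-isomorphism}(3) identifying $H_n^{(0,1)}\cong R_n^{(0,1)}\cong\bigoplus_{i} S^{(n-i,1^i)}$ with $S^{(n-d,1^d)}$ sitting in $\bm\theta$-degree $d$, we get a graded isomorphism
\[
\Hom_{\S_n}\left(\epsilon, H_n^{(2,0)}\otimes H_n^{(0,1)}\right)\cong\bigoplus_{d=0}^{n-1}\Hom_{\S_n}\left(S^{(d+1,1^{n-d-1})},H_n^{(2,0)}\right),
\]
where the $d$-th summand carries $\bm\theta$-degree $d$ and the $(\bm x,\bm y)$-bigrading of $H_n^{(2,0)}$. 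This uses $\epsilon\otimes S^{(n-d,1^d)}\cong S^{(d+1,1^{n-d-1})}$. All isomorphisms in the chain are compatible with the bigrading, since the bigrading is carried entirely by $M$.

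Next, apply Theorem~\ref{thm:tildehook-hom-iso} with $M=H_n^{(2,0)}$. This gives a bigraded isomorphism $\Hom_{\S_n}(S^{(d+1,1^{n-d-1})},H_n^{(2,0)})\cong\widetilde G_{\beta_d}$. The map is evaluation at the fixed vector $e_+$, which is homogeneous of degree zero, so each homogeneous component is matched with the corresponding homogeneous component. Composing, the source is graded-isomorphic to $\bigoplus_d \widetilde G_{\beta_d}$, with $\widetilde G_{\beta_d}$ placed in $\bm\theta$-degree $d$.

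Now apply $\Phi=\bigoplus_d\Phi_d$. By Theorem~\ref{thm:harmonic-2-1}, each $\Phi_d(f)$ lies in $H_n^{(2,1)}$. Since $\Phi_d(f)=\cY_{[n]}(\cdots)$, it is alternating, so it lies in $(H_n^{(2,1)})_{\sgn}$, which we identify with $\Hom_{\S_n}(\epsilon,H_n^{(2,1)})$ via $\phi\mapsto\phi(1)$. By Lemma~\ref{lem:injectivity}, each $\Phi_d$ is injective, preserves the $(\bm x,\bm y)$-bidegree, and lands in $\bm\theta$-degree exactly $d$.

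It remains to see that $\Phi$ is injective on the full direct sum. The images of different $d$ lie in distinct $\bm\theta$-homogeneous components of $H_n^{(2,1)}$, and these components form a direct sum. So a relation $\sum_d\Phi_d(f_d)=0$ forces each $\Phi_d(f_d)=0$, hence each $f_d=0$. Composing this with the isomorphisms above yields the desired grading-preserving injection.

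The step most needing care is the first one. One must verify that the abstract isomorphisms of Equation~\eqref{eqn:hook-sign-iso} and Theorem~\ref{thm:tildehook-hom-iso} respect all three gradings. The point is that the $\bm\theta$-degree of $H_n^{(0,1)}$ must match the index $d$ of the hook $(d+1,1^{n-d-1})$, rather than the conjugate index $n-1-d$. This is exactly the bookkeeping that the graded statement of Proposition~\ref{prop:HS-generalization} records at the level of characters. To make the construction honest I would realize it concretely: I would write down explicitly the sign vector in $S^{(d+1,1^{n-d-1})}\otimes S^{(n-d,1^d)}$ built from $\theta_{\overline{\beta_d}}$ and $e_+$, and check degree matching there.
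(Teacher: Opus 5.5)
Your proposal is correct and follows essentially the same route as the paper: identify the source with $\bigoplus_{d}\widetilde{G}_{\beta_d}$ via Equation~\eqref{eqn:hook-sign-iso} and Theorem~\ref{thm:tildehook-hom-iso}, then apply Theorem~\ref{thm:harmonic-2-1} and Lemma~\ref{lem:injectivity} for each $d$ and sum over $\bm\theta$-degrees. Your two extra checks are the bookkeeping the paper leaves implicit, and both are right: $\bm\theta$-degree $d$ of $H_n^{(0,1)}$, namely $S^{(n-d,1^d)}$, pairs with the hook $(d+1,1^{n-d-1})$, and images in distinct $\bm\theta$-degrees give injectivity on the full direct sum.
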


\begin{proof}
    Fix $n\geq 1$ and $0 \leq d \leq n-1$.
    Since $\sigma \cY_{[n]} = \sgn(\sigma) \cY_{[n]}$ for all $\sigma \in \S_n$, for any $f \in \widetilde{G}_{\beta_d}$ the element $\Phi_d(f)$ is alternating and thus contained in the sign-isotypic component of $H_n^{(2,1)}$.
    So by Lemma~\ref{lem:injectivity}, 
    we obtain an injective map of bigraded vector spaces
    \[
\Phi_d: \widetilde{G}_{\beta_d} \hookrightarrow \epsilon \otimes \Hom_{\S_n}\left(\epsilon,\left(H_n^{(2,1)}\right)_{\deg_{\bm\theta }=d}\right) \cong \Hom_{\S_n}\left(\epsilon,\left(H_n^{(2,1)}\right)_{\deg_{\bm{\theta}} =d}\right).
    \]
By Theorem~\ref{thm:tildehook-hom-iso} there is also an isomorphism
    \[
    \widetilde{G}_{\beta_d} \cong \Hom_{\S_n}(S^{(d+1,1^{n-d-1})}, H_n^{(2,0)}).
    \]
Summing over all $\bm{\theta}$-degree $d$ components, and via Equation~\eqref{eqn:hook-sign-iso}, the sum $\Phi =\bigoplus_d \Phi_d$ induces a grading-preserving injective map $\widetilde{\Phi}:\Hom_{\S_n}\left( \epsilon, H_n^{(2,0)} \otimes H_n^{(0,1)}\right) \hookrightarrow \Hom_{\S_n}\left(\epsilon, H_n^{(2,1)}\right)$ of triply-graded vector spaces:
\[
\begin{tikzcd}
\Hom_{\S_n}\left( \epsilon, H_n^{(2,0)} \otimes H_n^{(0,1)}\right) \arrow[d,hook,"\widetilde{\Phi}"{font=\normalsize}, start anchor = {[xshift=-30ex]}, end anchor = {[xshift=-20ex]}]
\cong \displaystyle\bigoplus_{d=0}^{n-1}\Hom_{\S_n}\left( S^{(d+1,1^{n-d-1})}, H_n^{(2,0)}\right) 
\cong \bigoplus_{d=0}^{n-1} \widetilde{G}_{\beta_d}
\arrow[d,hook',"\Phi"' {font=\large},start anchor = {[xshift=30ex]}, end anchor = {[xshift=20ex]}]\\
\Hom_{\S_n}\left(\epsilon, H_n^{(2,1)}\right) =\displaystyle\bigoplus_{d=0}^{n-1} \Hom_{\S_n}\left(\epsilon,\left(H_n^{(2,1)}\right)_{\deg_{\bm{\theta}} =d}\right).
\end{tikzcd}
\]
\end{proof}

By Theorem \ref{thm:Phi-injection}, the graded dimension of $(H_n^{(2,0)} \otimes H_n^{(0,1)})_{\sgn}$ is bounded above by that of $(H_n^{(2,1)})_{\sgn}$. 
Combined with Lemma~\ref{lem:quotient}, we obtain the promised isomorphism.

\begin{theorem}\label{thm:main}
For $n \geq 1$, the quotient map $\rho: R_n^{(2,0)} \otimes R_n^{(0,1)} \twoheadrightarrow R_n^{(2,1)}$ restricts to an isomorphism on the sign-isotypic components.
\end{theorem}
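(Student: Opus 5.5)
The plan is a dimension count. We compare the graded dimensions of the two sign-isotypic components and combine this with surjectivity of $\rho$ on sign components.

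First, since $\rho$ is a surjective map of triply-graded $\S_n$-modules (Lemma~\ref{lem:quotient}) and taking isotypic components is exact for finite-dimensional $\S_n$-modules in characteristic zero, $\rho$ restricts to a surjection
\[
\left(R_n^{(2,0)} \otimes R_n^{(0,1)}\right)_{\sgn} \twoheadrightarrow \left(R_n^{(2,1)}\right)_{\sgn}.
\]
Concretely, the sign projector $\cY_{[n]}$ commutes with $\rho$, so $\rho$ maps the image of $\cY_{[n]}$ onto the image of $\cY_{[n]}$. This surjection respects the triple grading, so in each triple degree $(r,s,d)$ we get
\[
\dim\left(R_n^{(2,1)}\right)_{\sgn,(r,s,d)} \le \dim\left(R_n^{(2,0)} \otimes R_n^{(0,1)}\right)_{\sgn,(r,s,d)}.
\]
Each graded piece is finite dimensional, since the rings are quotients of polynomial rings by ideals containing all positive-degree invariants.

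Second, we get the reverse inequality from Theorem~\ref{thm:Phi-injection}. Proposition~\ref{prop:H-R-isomorphism} gives grading-preserving $\S_n$-isomorphisms $H_n^{(2,0)}\cong R_n^{(2,0)}$, $H_n^{(0,1)}\cong R_n^{(0,1)}$ and $H_n^{(2,1)}\cong R_n^{(2,1)}$. Hence the tensor product $H_n^{(2,0)}\otimes H_n^{(0,1)}\cong R_n^{(2,0)}\otimes R_n^{(0,1)}$ as triply-graded $\S_n$-modules. The injection $\widetilde{\Phi}$ of Theorem~\ref{thm:Phi-injection} then gives, degree by degree,
\[
\dim \Hom_{\S_n}\left(\epsilon, R_n^{(2,0)} \otimes R_n^{(0,1)}\right)_{(r,s,d)} \le \dim \Hom_{\S_n}\left(\epsilon, R_n^{(2,1)}\right)_{(r,s,d)}.
\]
The only point to check is that $\widetilde{\Phi}$ is genuinely triply-graded. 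Its $(\bm x,\bm y)$-bidegree is preserved by Lemma~\ref{lem:injectivity}. For the $\bm\theta$-degree, the summand $\Hom_{\S_n}(S^{(d+1,1^{n-d-1})},H_n^{(2,0)})$ must sit in $\bm\theta$-degree $d$ on the left side, and it is sent by $\Phi_d$ into $\bm\theta$-degree $d$. This matches Proposition~\ref{prop:HS-generalization}, where the hook $(k+1,1^{n-k-1})$ carries weight $a^k$.

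Finally, the two inequalities together give equality of graded dimensions. A surjective linear map between finite-dimensional spaces of equal dimension is bijective, so the restriction of $\rho$ to each graded piece of the sign components is an isomorphism. Summing over degrees, and noting that $\rho$ is $\S_n$-equivariant, $\rho$ restricts to an isomorphism of triply-graded $\S_n$-modules on sign-isotypic components.

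The main obstacle is bookkeeping rather than new mathematics. We must confirm that the $\bm\theta$-grading conventions on the two sides of Equation~\eqref{eqn:hook-sign-iso} agree degree by degree, and not merely in total dimension, so that the graded comparison is legitimate. The degreewise finiteness that makes the dimension argument valid is standard.
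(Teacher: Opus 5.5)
Your proposal is correct and follows essentially the same route as the paper. The paper also transports the injection $\widetilde{\Phi}$ of Theorem~\ref{thm:Phi-injection} through the harmonic isomorphisms of Proposition~\ref{prop:H-R-isomorphism}. It combines this with the surjection on sign multiplicity spaces induced by $\rho$ (Lemma~\ref{lem:quotient} plus Schur's lemma), and concludes by finite-dimensionality in each tridegree. Your check that the hook summand $\Hom_{\S_n}(S^{(d+1,1^{n-d-1})},H_n^{(2,0)})$ sits in $\bm\theta$-degree $d$ on both sides is exactly the bookkeeping implicit in the ``grading-preserving'' claim of Theorem~\ref{thm:Phi-injection}.
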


\begin{proof}
Denote by $\eta_{(k,\ell)}: H_n^{(k,\ell)} \to R_n^{(k,\ell)}$ the isomorphisms of multigraded $\S_n$-modules considered in Proposition \ref{prop:H-R-isomorphism} for $(k,\ell)\in\{(2,0),(2,1),(0,1)\}$. Then, by Theorem \ref{thm:Phi-injection}, we have an injective map of triply-graded vector spaces from $ \Hom_{\S_n}\left(\epsilon, R_n^{(2,0)} \otimes R_n^{(0,1)}\right)$ into  $\Hom_{\S_n}\left(\epsilon, R_n^{(2,1)}\right)$, given by the composition:
\begin{equation}\label{eq:injection on R}
\begin{tikzcd}
    \Hom_{\S_n}\left(\epsilon, R_n^{(2,0)} \otimes R_n^{(0,1)}\right)\arrow[d, "(\eta_{(2,0)}^{-1}\otimes \eta_{(0,1)}^{-1})\circ(-) "]\arrow[r,hook]
    &
    \Hom_{\S_n}\left(\epsilon, R_n^{(2,1)}\right)
    \\
     \Hom_{\S_n}\left(\epsilon, H_n^{(2,0)} \otimes H_n^{(0,1)}\right)\arrow[r,hook, "\widetilde{\Phi}"]
    &
    \Hom_{\S_n}\left(\epsilon, H_n^{(2,1)}\right)\arrow[u, "\eta_{(2,1)}\circ (-)"].
\end{tikzcd}
\end{equation}

Recall from Lemma~\ref{lem:quotient} that the quotient map $\rho : R_n^{(2,0)} \otimes R_n^{(0,1)} \twoheadrightarrow  R_n^{(2,1)}$ is a surjection of triply-graded $\mathfrak{S}_n$-modules.
By Schur's lemma, the map restricts to a surjection on the sign-isotypic components
\begin{equation}\label{eq:sign-surjection}
\rho|_{\sgn} : (R_n^{(2,0)} \otimes R_n^{(0,1)})_{\sgn}\twoheadrightarrow  (R_n^{(2,1)})_{\sgn},
\end{equation}
so we have an induced grading-preserving surjection on the multiplicity spaces
\begin{equation}\label{eq:hom-surjection}
  \Hom_{\S_n}(\epsilon, R_n^{(2,0)} \otimes R_n^{(0,1)}) \twoheadrightarrow
    \Hom_{\S_n}(\epsilon, R_n^{(2,1)}).
    \end{equation}
Combined with Equation \eqref{eq:injection on R}, since both spaces are finite-dimensional in each tridegree, Equation~\eqref{eq:hom-surjection} is an isomorphism, hence Equation~\eqref{eq:sign-surjection} is as well.
\end{proof}

\begin{remark}
 In Proposition \ref{prop:H-R-isomorphism} there is an isomorphism between $H_n^{(k,\ell)}$ and $R_n^{(k,\ell)}$ as graded $\S_n$-modules. 
 These spaces admit a different identification as graded duals.
 Using this, we obtain the following alternative proof of Theorem \ref{thm:main} communicated to us by Eugene Gorsky.
 
 Rota's apolar form in Equation \eqref{eq:Rota-apolar-form} induces a perfect degree-preserving bilinear pairing $H_n^{(2,0)} \times R_n^{(2,0)} \to \C$, so that $R_n^{(2,0)} \cong (H_n^{(2,0)})^*$ as bigraded $\S_n$-modules. By extending the definition of Rota's apolar form to include fermionic variables (see \cite{SwansonWallach1} and \cite[Appendix A]{JiangLentfer}), one obtains similar identifications $R_n^{(0,1)} \cong (H_n^{(0,1)})^*$ and $R_n^{(2,1)} \cong (H_n^{(2,1)})^*$. 
Since all these spaces are finite-dimensional, the injection in Theorem \ref{thm:Phi-injection} induces a surjective map on their duals, which by the identification above yields a surjection
\[
\Hom_{\S_n}\left(\epsilon, R_n^{(2,1)}\right)
\twoheadrightarrow
\Hom_{\mathfrak{S}_n}\left(\epsilon, R_n^{(2,0)} \otimes R_n^{(0,1)}\right).
\]
Combined with Equation \eqref{eq:hom-surjection} we again obtain the desired isomorphism.
\end{remark}

As a consequence of Theorem \ref{thm:main}, we obtain the following combinatorial formula for the Hilbert series of $(R_n^{(2,1)})_{\sgn}$.

\begin{corollary}\label{cor:sign-character-schroder}
    For all $n\geq 1$,
\begin{equation*}
    \Hilb((R_n^{(2,1)})_{\sgn}; q,t;a) = \left\langle \Frob(R_n^{(2,1)}; q,t;a), e_n\right\rangle =  \widetilde{S}_n(q,t,a).
\end{equation*}
\end{corollary}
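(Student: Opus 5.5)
The plan is to transport the computation from $R_n^{(2,1)}$ back to $R_n^{(2,0)} \otimes R_n^{(0,1)}$ via Theorem~\ref{thm:main}, and then read off the answer from Corollary~\ref{cor:(2,0)(0,1)bound}.

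First, the left equality $\Hilb((R_n^{(2,1)})_{\sgn}; q,t;a) = \langle \Frob(R_n^{(2,1)}; q,t;a), e_n\rangle$ is the general fact recalled after the definition of the Frobenius series. For each tridegree, the sign-isotypic component $M_{\sgn} \cong \epsilon \otimes \Hom_{\S_n}(\epsilon, M)$ has dimension equal to the multiplicity of $\epsilon$. That multiplicity is the coefficient of $s_{(1^n)} = e_n$ in $F\Char(M)$, which is the Hall pairing with $e_n$. Summing over tridegrees gives the equality.

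Second, Theorem~\ref{thm:main} says that $\rho$ restricts to a triply-graded isomorphism
\[
(R_n^{(2,0)} \otimes R_n^{(0,1)})_{\sgn} \cong (R_n^{(2,1)})_{\sgn}.
\]
Hence the two sign components have equal graded dimensions in each tridegree $(r,s,k)$. This gives
\[
\langle \Frob(R_n^{(2,1)};q,t;a), e_n\rangle = \langle \Frob(R_n^{(2,0)} \otimes R_n^{(0,1)};q,t;a), e_n\rangle.
\]

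Third, Corollary~\ref{cor:(2,0)(0,1)bound} at $m=2$ identifies the right-hand side as $\sum_{k=0}^{n-1} a^k\langle \nabla e_n, s_{(k+1,1^{n-k-1})}\rangle$. This uses Theorem~\ref{thm:haiman} and the Kronecker-product computation in Proposition~\ref{prop:HS-generalization}. By Equation~\eqref{eq:Schroder-polynomial-def-second}, equivalently Theorem~\ref{thm:qt-Schroeder-one-hook}, this sum equals $\widetilde{S}_n(q,t,a)$.

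There is no real obstacle here, since all the substantive work sits in Theorem~\ref{thm:main}. The one point to check is grading bookkeeping: the triple grading on $R_n^{(2,0)} \otimes R_n^{(0,1)}$ must assign $q,t$ to the $\bm{x},\bm{y}$-degrees of the first factor and $a$ to the $\bm\theta$-degree of the second. The map $\rho$ must preserve exactly this grading, and Lemma~\ref{lem:quotient} together with Theorem~\ref{thm:main} guarantees that it does.
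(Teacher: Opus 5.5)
Your proposal is correct and follows essentially the same route as the paper: Theorem~\ref{thm:main} transfers the sign-isotypic graded multiplicity from $R_n^{(2,1)}$ to $R_n^{(2,0)} \otimes R_n^{(0,1)}$, and Corollary~\ref{cor:(2,0)(0,1)bound} then evaluates it as $\widetilde{S}_n(q,t,a)$. Your justification of the first equality (the dimension of the sign component equals the coefficient of $s_{(1^n)}=e_n$) is a correct spelling-out of a fact the paper only recalls.
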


\begin{proof}
    Taking the Frobenius character of both sides of the isomorphism in~\eqref{eq:sign-surjection} and applying Corollary~\ref{cor:(2,0)(0,1)bound} we obtain the equalities
\[
    \left\langle \Frob(R_n^{(2,1)}; q,t;a), e_n\right\rangle
    = \left\langle \Frob(R_n^{(2,0)} \otimes R_n^{(0,1)}; q,t;a), e_n\right\rangle
    = \widetilde{S}_n(q,t,a).
    \]
\end{proof}

By specializing $q=t=a=1$ in Corollary~\ref{cor:sign-character-schroder} we prove the following enumerative conjecture of F. Bergeron \cite[Table 3]{Bergeron2020}.
\begin{corollary}\label{cor:Bergeron-conj}
    For $n \geq 1$, the sign character of $R_n^{(2,1)}$ has multiplicity $\widetilde{S}_n$.
\end{corollary}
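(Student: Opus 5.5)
This follows by specializing Corollary~\ref{cor:sign-character-schroder} at $q=t=a=1$. The multiplicity of the sign character $\epsilon$ in $R_n^{(2,1)}$ is $\dim\Hom_{\S_n}(\epsilon,R_n^{(2,1)})$. By the convention above, this is the total dimension of the graded multiplicity space. That in turn is $\langle \Frob(R_n^{(2,1)};q,t;a), e_n\rangle$ evaluated at $q=t=a=1$. We may specialize because $R_n^{(2,1)}$ is finite-dimensional: it is a quotient of $R_n^{(2,0)}\otimes R_n^{(0,1)}$ by Lemma~\ref{lem:quotient}, and that tensor product has finite dimension $(n+1)^{n-1}2^{n-1}$. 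Hence the Hilbert series of $(R_n^{(2,1)})_{\sgn}$ is a polynomial, and its value at $1$ is the total multiplicity.

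Corollary~\ref{cor:sign-character-schroder} identifies this polynomial with $\widetilde{S}_n(q,t,a)$. Using the path model from Section~\ref{sec:Schroder-polynomials},
\[
\widetilde{S}_n(q,t,a)=\sum_{\pi\in\tilde{L}_{n,n}^+} q^{\area(\pi)}t^{\bounce(\pi)}a^{\diag(\pi)}.
\]
Setting $q=t=a=1$ gives $|\tilde{L}_{n,n}^+|=\widetilde{S}_n$, as noted right after that formula.

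There is no real obstacle here. The only care needed is the bookkeeping between three quantities: the sign multiplicity, the graded dimension of $\Hom_{\S_n}(\epsilon,-)$, and the pairing with $e_n$. Alternatively, one can bypass the path model and use Proposition~\ref{prop:BigSmall-Schroder} together with the formula $S_n=|L_{n,n}^+|$ to get $\widetilde{S}_n(1,1,1)=S_n(1,1,1)/2=S_n/2=\widetilde{S}_n$.
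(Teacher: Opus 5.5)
Your proposal is correct and matches the paper, which likewise obtains the corollary by specializing $q=t=a=1$ in Corollary~\ref{cor:sign-character-schroder} and using $\widetilde{S}_n(1,1,1)=\widetilde{S}_n$. Your finite-dimensionality remark is valid but not needed, since that corollary already shows the Hilbert series is the polynomial $\widetilde{S}_n(q,t,a)$.
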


\subsection{The sign character in Zabrocki's conjecture} \label{subsec:zabrocki-conj}
The Delta conjecture of Haglund, Remmel, and Wilson \cite{HaglundRemmelWilson2018}, now a theorem, gives a combinatorial formula in terms of decorated Dyck paths for expressions of the form $\Delta'_{e_k}(e_n)$. 
Its sign-character case was proven by Zabrocki as a part of the $4$-variable Catalan theorem \cite{Zabrocki-4Catalan-2016}, and the full Delta conjecture was proven first by D'Adderio and Mellit \cite{DAdderioMellit} and also by Blasiak, Haiman, Morse, Pun, and Seelinger \cite{BHMPS-Delta}. 
D'Adderio, Iraci, and Vanden Wyngaerd \cite[Section~6]{D_Adderio_2021} observed that the sign-character case is the Schr\"oder case of their compositional Delta conjecture, also proven in \cite{DAdderioMellit}.

Zabrocki \cite{Zabrocki2019} conjectured (see Conjecture~\ref{conj:zabrocki}) that the diagonal superspace coinvariant ring $R_n^{(2,1)}$ is a module for the Delta theorem, that is,
\begin{equation}\label{eq:zabrocki}
    \Frob(R_n^{(2,1)};q,t;a) = \sum_{d=0}^{n-1} a^d \Delta'_{e_{n-1-d}}(e_n).
\end{equation}
The conjecture was verified by computer for $n \leq 6$ in \cite{Zabrocki2019} and remains open. 
In what follows we prove it for the sign-isotypic component.

We begin by relating the right hand side of \eqref{eq:zabrocki} to the little Schr\"oder polynomials. Although Proposition~\ref{prop:zabrocki-alternating} follows from Zabrocki's 4-variable Catalan theorem, we provide an elementary proof below.

\begin{proposition}\label{prop:zabrocki-alternating}
For $n \geq 1$, we have the equality
    \begin{equation*}
        \left\langle \sum_{d=0}^{n-1} a^d\Delta'_{e_{n-1-d}}(e_n),e_n
        \right\rangle = \widetilde{S}_n(q,t,a).
    \end{equation*}
\end{proposition}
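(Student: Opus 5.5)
We prove the identity coefficientwise in $a$. It suffices to show that for each $0 \leq d \leq n-1$,
\[
\left\langle \Delta'_{e_{n-1-d}}(e_n), e_n \right\rangle = \widetilde{S}_{n,d}(q,t) = \sum_{i=0}^{d} (-1)^{i+d} S_{n,i}(q,t),
\]
where the second equality is Proposition~\ref{prop:Schroder-alternating}. The plan is to express $\Delta'_{e_k}$ in terms of the unprimed operators $\Delta_{e_j}$. Then we pair with $e_n$ using the known identity $\langle \Delta_{e_j} e_n, e_n\rangle$, which is itself a Schr\"oder-type quantity expressible through $\nabla e_n$.

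\textbf{Step 1: convert $\Delta'$ into $\Delta$.} The plethystic identity $e_k[B_\mu] = e_k[(B_\mu - 1) + 1] = e_k[B_\mu - 1] + e_{k-1}[B_\mu - 1]$ gives $\Delta_{e_k} = \Delta'_{e_k} + \Delta'_{e_{k-1}}$ as operators. Inverting this recursion, exactly as in Proposition~\ref{prop:Schroder-alternating}, yields
\[
\Delta'_{e_k} = \sum_{j=0}^{k} (-1)^{k-j} \Delta_{e_j}.
\]
With $k = n-1-d$ we obtain
\[
\langle \Delta'_{e_{n-1-d}} e_n, e_n\rangle = \sum_{j=0}^{n-1-d} (-1)^{n-1-d-j} \langle \Delta_{e_j} e_n, e_n \rangle.
\]

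\textbf{Step 2: identify $\langle \Delta_{e_j} e_n, e_n\rangle$ with $S_{n,n-j}(q,t)$.} This is the classical fact, due to Haglund~\cite{Haglund2004} and used in the proof of the $(q,t)$-Schr\"oder theorem, that
\[
\langle \Delta_{e_{n-d}} e_n, e_n\rangle = \langle \nabla e_n, e_{n-d}h_d\rangle.
\]
Combined with Theorem~\ref{thm:qt-Schroeder-two-hooks}, this gives $\langle \Delta_{e_j} e_n, e_n\rangle = S_{n,n-j}(q,t)$. To keep the argument self-contained, one can derive it via the Macdonald expansion $e_n = \sum_\mu \frac{M B_\mu \Pi_\mu \tilde H_\mu}{w_\mu}$ together with the standard evaluation $\langle \tilde H_\mu, e_{n-d}h_d\rangle = e_{n-d}[B_\mu]$ (up to the usual symmetry of $\tilde{H}_\mu$ under $\omega$). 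Pairing $\nabla e_n$ with $e_{n-d}h_d$ and pairing $\Delta_{e_{n-d}} e_n$ with $e_n$ then produce the same sum, because $T_\mu$ and the relevant $\omega$-twist cancel.

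I expect Step 2 to be the main obstacle. The care needed is in the $\omega$/$T_\mu$ bookkeeping and in the precise form of $\langle \tilde H_\mu, s_{\text{hook}}\rangle$; if this gets messy, I will simply cite Haglund's identity.

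\textbf{Step 3: reindex.} Substituting $i = n-j$ gives
\[
\sum_{i=d+1}^{n} (-1)^{i-d-1} S_{n,i}(q,t).
\]
The alternating sum $\sum_{i=0}^{n} (-1)^i S_{n,i}$ vanishes, since it equals $S_n(q,t,-1) = (1-1)\widetilde{S}_n(q,t,-1) = 0$ by Proposition~\ref{prop:BigSmall-Schroder}. Hence the tail sum above equals the head sum $\sum_{i=0}^{d} (-1)^{i+d} S_{n,i}(q,t)$, which is $\widetilde{S}_{n,d}(q,t)$ by Proposition~\ref{prop:Schroder-alternating}. Multiplying by $a^d$ and summing over $d$ completes the proof.
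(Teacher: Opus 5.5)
Your argument is correct and is essentially the paper's. Both rest on the identity $\langle \Delta_{e_{n-d}}(e_n), e_n\rangle = S_{n,d}(q,t)$, combined with the splitting $\Delta_{e_k}=\Delta'_{e_k}+\Delta'_{e_{k-1}}$ and a telescoping sum. The paper cites Haglund's book for the identity. Your Macdonald-expansion sketch also works, and in fact needs no $\omega$-twist: both pairings equal $\sum_\mu \frac{MB_\mu\Pi_\mu}{w_\mu}T_\mu e_{n-d}[B_\mu]$.

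The one difference is the direction of the telescoping. You start from $\Delta_{e_0}$ and convert the tail sum to the head sum using $S_n(q,t,-1)=0$. The paper telescopes from the top and drops the boundary term using $\Delta'_{e_n}(e_n)=0$. Under the key identity these two vanishing statements are equivalent.
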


\begin{proof}
Extracting coefficients of $a^d$, by linearity of the Hall inner product it suffices to prove that 
 $ \widetilde{S}_{n,d}(q,t) = \left\langle \Delta'_{e_{n-1-d}}(e_n),e_n \right\rangle $ for each $0 \leq d \leq n-1$.
Combining \cite[Corollary 4.14.2]{Haglund2008} and \cite[Corollary 4.16.1]{Haglund2008}, we deduce that
\begin{equation*}
S_{n,d}(q,t) = \left\langle \Delta_{h_n}(e_{n+1-d}), h_{n+1-d} \right\rangle = \left\langle \Delta_{e_{n-d}}(e_n),e_n \right\rangle.
\end{equation*}
Thus, by Proposition~\ref{prop:Schroder-alternating} and the fact that $\Delta_{e_{n-d}}(e_n) = \Delta'_{e_{n-d}}(e_n) + \Delta'_{e_{n-d-1}}(e_n)$, we obtain that
\begin{align*}
    \widetilde{S}_{n,d}(q,t) 
    &= \sum_{i=0}^d (-1)^i S_{n,d-i}(q,t) 
    \\
    &= \sum_{i=0}^d (-1)^i \left\langle \Delta'_{e_{n-d+i}}(e_n),e_n \right\rangle  + \sum_{i=0}^d (-1)^i \left\langle \Delta'_{e_{n-d+i-1}}(e_n),e_n \right\rangle
    \\
    &= (-1)^d \left\langle \Delta'_{e_{n}}(e_n),e_n \right\rangle  + \sum_{i=0}^{d-1} ((-1)^i+(-1)^{i+1}) \left\langle \Delta'_{e_{n-d+i}}(e_n),e_n \right\rangle 
    + \left\langle \Delta'_{e_{n-d-1}}(e_n),e_n \right\rangle \\
    &= \left\langle \Delta'_{e_{n-d-1}}(e_n),e_n \right\rangle 
\end{align*}
since $\Delta'_{e_{n}}(e_n)=0$.
\end{proof}

Combining Proposition~\ref{prop:zabrocki-alternating} with Theorem~\ref{thm:main}, we obtain the sign-character component of Zabrocki's conjecture. 

\begin{corollary}\label{cor:zabrocki-conjecture-sign} 
For $n \geq 1$, the sign-character component of Conjecture~\ref{conj:zabrocki} is true:
    \begin{equation*}
    \left\langle \Frob(R_n^{(2,1)};q,t;a), e_n\right\rangle = \left\langle \sum_{d=0}^{n-1} a^d \Delta'_{e_{n-1-d}}(e_n), e_n \right\rangle.
\end{equation*}
\end{corollary}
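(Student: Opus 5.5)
This corollary is a direct chaining of two results already established, so the plan is to equate both sides of the claimed identity with the little Schr\"oder polynomial $\widetilde{S}_n(q,t,a)$.

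For the left-hand side, invoke Corollary~\ref{cor:sign-character-schroder}. It rests on the isomorphism of sign-isotypic components in Theorem~\ref{thm:main} together with the Kronecker-product computation in Corollary~\ref{cor:(2,0)(0,1)bound}, and it gives
\[
\left\langle \Frob(R_n^{(2,1)};q,t;a), e_n\right\rangle = \widetilde{S}_n(q,t,a).
\]

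For the right-hand side, invoke Proposition~\ref{prop:zabrocki-alternating}, which says that
\[
\left\langle \sum_{d=0}^{n-1} a^d \Delta'_{e_{n-1-d}}(e_n), e_n \right\rangle = \widetilde{S}_n(q,t,a).
\]
Comparing the two displays yields the claimed equality.

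The only point needing care is bookkeeping. Both identities must be read as equalities of polynomials in $q,t,a$ under the same grading conventions: $q$ records the $\bm{x}$-degree, $t$ the $\bm{y}$-degree, and $a$ the $\bm{\theta}$-degree. In each identity the coefficient of $a^d$ is $\widetilde{S}_{n,d}(q,t)$, so the conventions match. With that checked, there is no substantive obstacle; all the real work sits in Theorem~\ref{thm:main} and in the alternating-sum argument of Proposition~\ref{prop:zabrocki-alternating}.
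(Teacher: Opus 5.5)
Your proposal is correct and matches the paper's argument. You identify both sides with $\widetilde{S}_n(q,t,a)$: the left via Corollary~\ref{cor:sign-character-schroder}, which comes from Theorem~\ref{thm:main} and Corollary~\ref{cor:(2,0)(0,1)bound}, and the right via Proposition~\ref{prop:zabrocki-alternating}, and your check that the coefficient of $a^d$ is $\widetilde{S}_{n,d}(q,t)$ in both is right.
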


\subsection{On the \texorpdfstring{$a=-t$}{a=-t} specialization}

Recall the \newword{classical coinvariant ring} $R_n^{(1,0)} = \C[\bm{x}]/I_n^{(1,0)}$, where $I_n^{(1,0)} = \left\langle \C[\bm{x}]^{\mathfrak{S}_n}_+ \right\rangle$. 
By \cite{Lentfer-Supersymmetry}, we have that
\begin{equation*}
    \Frob(R_n^{(2,1)};q,t;a)|_{a=-t} = \Frob(R_n^{(1,0)};q).
\end{equation*}
Since the sign-isotypic component of $R_n^{(1,0)}$ is spanned by the Vandermonde determinant $\delta_n$, which is homogeneous of degree $\binom{n}{2}$, we have
\begin{equation*}
    \left\langle \Frob(R_n^{(1,0)};q), e_n \right\rangle = q^{\binom{n}{2}}.
\end{equation*}
Combined with Corollary~\ref{cor:sign-character-schroder}, this yields the specialization
\begin{equation*}\label{eq:a-equals-minus-t}
    \sum_{d=0}^{n-1} (-t)^d \widetilde{S}_{n,d}(q,t) = q^{\binom{n}{2}}.
\end{equation*}
At $q=t=1$, this yields the enumerative identity
$\sum_{d=0}^{n-1}(-1)^d |\tilde{L}_{n,n,d}^+| = 1$.    
While these identities look simple, they appear difficult to prove combinatorially, because a sign-reversing involution would have to interchange the number of diagonal steps with the bounce statistic (equivalently, after applying the zeta map, with the area statistic).

\section{Connections with link homology}\label{sec:link-homology}
\newword{Khovanov--Rozansky homology} is a triply-graded homology theory which categorifies the HOMFLY-PT polynomial \cite{KR1,KR2,Kh}. In recent years, a plethora of connections (many still conjectural) to $(q,t)$-Catalan and Schr\"oder combinatorics have arisen \cite{Gorsky-Catalan,Hog17, GL1, GL2,GM1,GM2,GMV1,GMV2,Mellit,CGHM}. 
Fundamentally, these results assert that the Poincar\'e series $\mathcal{P}_K(q,t,a)$ of the Khovanov--Rozansky homology of certain knots $K$ coincides with the sum of the hook components of the symmetric functions coming from the various Shuffle theorems \cite{CarlssonMellit2018, Mellit, BHMPS-Paths}. 
In the most classical setting when $K = T(n,n+1)$ the $(n,n+1)$-torus knot, this correspondence, conjectured by Gorsky \cite{Gorsky-Catalan} and proven by Hogancamp \cite{Hog17}, states:

\begin{theorem}[\cite{Hog17}]\label{thm:Hog-torus}
The Poincar\'e series $\mathcal{P}_K(q,t,a)$ of the triply-graded Khovanov--Rozansky homology of the $(n, n+ 1)$-torus knot $K$ equals
\begin{equation} \label{eq:torus homology}  \mathcal{P}_K(q,t,a)=\frac{1}{1-q}\sum_{d=0}^n \Big\langle \nabla e_n, h_de_{n-d} \Big\rangle a^d.
\end{equation}
\end{theorem}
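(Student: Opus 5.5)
This statement is Hogancamp's theorem, quoted from \cite{Hog17}, so the paper does not prove it and relies on it as an input. If I had to establish it, I would follow Hogancamp's strategy rather than anything in the coinvariant-ring framework developed above. The plan is to compute the Khovanov--Rozansky complex of $T(n,n+1)$ through a recursion in the category of Soergel bimodules. The braid of $T(n,n+1)$ is a product of the full twist on $n$ strands with a Coxeter braid.

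The first step is to introduce the Elias--Hogancamp family of complexes $C_v$, indexed by sequences $v$, together with skein-type relations. The model is the recursion Elias and Hogancamp used for $T(n,n)$ and $T(n,\infty)$. These relations express the complex attached to a partially twisted braid in terms of shorter ones. Marking each strand as ``unfinished'' or ``finished'', each marking corresponds to an N or E step, or a D step, of a path.

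The second step is to take Hochschild homology (the Markov trace). I would verify that the complexes in the recursion are parity, that is, concentrated in even homological degrees after the appropriate regrading. Parity forces every connecting differential in the long exact sequences to vanish. The Poincaré series then satisfies a linear recursion in $q,t,a$, with the overall factor $\frac{1}{1-q}$ coming from the polynomial ring of the closed strand. I expect this parity argument to be the main obstacle.

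The final step is combinatorial. I would show that the recursion unfolds as a sum over Schröder paths weighted by $q^{\area}t^{\bounce}a^{\diag}$, or an equivalent statistic pair. Matching this sum with Haglund's recursion for $S_{n,d}(q,t)$ would then identify the Poincaré series with $\frac{1}{1-q}\sum_d S_{n,d}(q,t)a^d$. By Theorem~\ref{thm:qt-Schroeder-two-hooks}, this is the right-hand side of \eqref{eq:torus homology}.
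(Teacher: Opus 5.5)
You are right that the paper gives no proof of this statement: it is imported from \cite{Hog17} and used only as a black box, combined with Equation~\eqref{eq:Schroder-polynomial-def-second} and Proposition~\ref{prop:BigSmall-Schroder} to rewrite $\mathcal{P}_K$ as $\frac{1+a}{1-q}\widetilde{S}_n(q,t,a)$. Your outline is a fair high-level summary of Hogancamp's external argument: a recursion for Soergel-bimodule complexes attached to the full twist times a Coxeter braid, parity to split the long exact sequences after Hochschild homology, and a match with the Schr\"oder-path recursion; your final step also invokes Theorem~\ref{thm:qt-Schroeder-two-hooks} correctly, since $h_de_{n-d}=e_{n-d}h_d$.
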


Combining this with Equation~\eqref{eq:Schroder-polynomial-def-second} and Proposition~\ref{prop:BigSmall-Schroder}, we obtain, for $K = T(n,n+1)$,
\begin{equation*}
\mathcal{P}_K(q,t,a) 
= \frac{1}{1-q} 
S_{n}(q,t,a) = \frac{1+a}{1-q} \; 
\widetilde{S}_{n}(q,t,a).
\end{equation*}
Consequently, putting this together with Corollary \ref{cor:sign-character-schroder}, we obtain the following. 

\begin{theorem}\label{thm:torusknot} 
The sign component of the Frobenius character of the diagonal superspace coinvariant ring $R_n^{(2,1)}$ computes the Poincar\'e series of the Khovanov--Rozansky homology of the $(n,n+1)$-torus knot $K$. Namely, 
  \begin{equation*}\label{eq:KR-(2,1)}
  \mathcal{P}_K(q,t,a) = \frac{1+a}{1-q} \; 
\left\langle \Frob(R_n^{(2,1)}; q,t;a), e_n \right\rangle . 
\end{equation*}
\end{theorem}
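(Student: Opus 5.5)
This is a chain of equalities, all of whose ingredients are already in place, so the plan is to assemble them. I start from Hogancamp's formula in Theorem~\ref{thm:Hog-torus}, which gives $\mathcal{P}_K(q,t,a)=\frac{1}{1-q}\sum_{d=0}^n \langle \nabla e_n, h_d e_{n-d}\rangle a^d$ for $K=T(n,n+1)$. The inner product is symmetric and $h_de_{n-d}=e_{n-d}h_d$. Hence, by Equation~\eqref{eq:Schroder-polynomial-def-second}, the sum is exactly the Schr\"oder polynomial $S_n(q,t,a)$, and so $\mathcal{P}_K=\frac{1}{1-q}S_n(q,t,a)$.

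Next I apply Proposition~\ref{prop:BigSmall-Schroder}, which factors $S_n(q,t,a)=(1+a)\widetilde{S}_n(q,t,a)$ for $n\ge 1$. This gives $\mathcal{P}_K=\frac{1+a}{1-q}\widetilde{S}_n(q,t,a)$. Finally, Corollary~\ref{cor:sign-character-schroder} identifies $\widetilde{S}_n(q,t,a)$ with $\langle \Frob(R_n^{(2,1)};q,t;a),e_n\rangle$, which yields the stated formula.

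The corollary itself rests on Theorem~\ref{thm:main}: the isomorphism of sign-isotypic components lets us replace $R_n^{(2,1)}$ by $R_n^{(2,0)}\otimes R_n^{(0,1)}$. That tensor product is then handled by Corollary~\ref{cor:(2,0)(0,1)bound}, using Haiman's Theorem~\ref{thm:haiman} and the $(q,t)$-Schr\"oder theorem.

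I expect no real obstacle. The only point needing care is that the grading conventions for $q,t,a$ in Hogancamp's Poincar\'e series agree with those used for $\nabla e_n$ and for the triply-graded Frobenius series here. In particular, the variable $a$ must record the same $h_d$-index (the diagonal-step count) that corresponds to $\bm\theta$-degree. I would check this against the statement of Theorem~\ref{thm:Hog-torus} exactly as it is quoted above, and take it as given in that form.
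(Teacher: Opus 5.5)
Your proposal is correct and follows the paper's route exactly. You rewrite Hogancamp's formula as $\frac{1}{1-q}S_n(q,t,a)$ via Equation~\eqref{eq:Schroder-polynomial-def-second}, factor it as $\frac{1+a}{1-q}\widetilde{S}_n(q,t,a)$ by Proposition~\ref{prop:BigSmall-Schroder}, and identify $\widetilde{S}_n(q,t,a)$ with the sign component using Corollary~\ref{cor:sign-character-schroder}.
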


\subsection{Comparison with Gorsky--Mellit} 
In light of Theorem~\ref{thm:Hog-torus}, it is evident from Corollary~\ref{cor:(2,0)(0,1)bound} that for the $(n,n+1)$-torus knot $K$:
\begin{equation} \label{eq:KR-tensor}
\mathcal{P}_K(q,t,a) = \frac{1+a}{1-q} \; 
\left\langle \Frob(R_n^{(2,0)} \otimes R_n^{(0,1)}; q,t;a), e_n \right\rangle. 
\end{equation}
In a very recent paper \cite{GorskyMellit}, Gorsky and Mellit gave a module-theoretic realization of \eqref{eq:KR-tensor} by constructing an explicit isomorphism between the Khovanov--Rozansky homology of $T(n,n+1)$, the hook components of $R_n^{(2,0)}$, and the sign component of $R_n^{(2,0)} \otimes R_n^{(0,1)}$ (see \cite[Section 3.3]{GorskyMellit}). 

Despite the similarities, it is important to note that their result does not imply our Theorem~\ref{thm:torusknot}. 
Indeed, Theorem~\ref{thm:main} establishes that the quotient map $R_n^{(2,0)} \otimes R_n^{(0,1)} \twoheadrightarrow R_n^{(2,1)}$ restricts to an isomorphism on sign-isotypic components; Theorem~\ref{thm:torusknot} then shows the same homology is computed by the diagonal superspace coinvariant ring $R_n^{(2,1)}$ itself.

\subsection{Beyond the \texorpdfstring{$(n,n+1)$}{(n,n+1)}-torus knot case}
In \cite{hog-mellit}, Hogancamp and Mellit extended Theorem~\ref{thm:Hog-torus} to all $(n,m)$-torus knots (in fact links), $T(n,m)$. 
More generally, the \emph{$(q,t)$-Schr\"oder theorem for paths under any line} \cite[Section 6]{CGHM} asserts that for any triangular partition $\lambda$, the Khovanov--Rozansky homology of the Coxeter knot $K_\lambda$ can be computed from the hook components of the symmetric function $\mathcal{H}_\lambda(X;q,t)$ arising in the Shuffle theorem \cite{BHMPS-Paths} for the path with maximal partition $\lambda$. 
Specifically, in \cite[Theorem 5.19 and Corollary 6.12]{CGHM}, Caprau, the first author, Hogancamp, and Mazin prove:\footnote{The difference in the coefficients between Equations \eqref{eq:torus homology} and \eqref{eq:coxeter homology} arises from different normalizations for the Poincar\'e polynomials being used in \cite{Hog17} versus \cite{CGHM}.}
\begin{equation}\label{eq:coxeter homology}
  \mathcal{P}_{K_\lambda}(q,t,a)=
  \frac{(at^{-1/2}q^{-1/2})^{|\lambda|}}{1-q}
  S_\lambda(q,t,a),
\end{equation}
where $S_\lambda(q,t,a)$ is the so-called \emph{triangular Schr\"oder polynomial}, which computes the hook components of $\mathcal{H}_\lambda(X;q,t)$ and recovers $S_n(q,t,a)$ when $\lambda=(n-1,n-2,\dots,1)$, the staircase partition. 

For an arbitrary partition $\lambda$, the representation theoretic interpretation of $\mathcal{H}_\lambda(X;q,t)$ as the Frobenius character of some symmetric group module is currently unknown.
However, when $\lambda$ is the maximal partition under the line $x/m+y/n=1$ with $m,n$ relatively prime, so that $K_\lambda$ is the $(n,m)$-torus knot, $T(n,m)$, the corresponding module arises as the associated graded of a certain filtration of the unique irreducible finite dimensional representation $L_{\frac{m}{n}}$ of the rational Cherednik algebra $H_{\frac{m}{n}}$ \cite{Xinchun, GorskyNegut, Gorsky_Oblomkov_Rasmussen_Shende_2014, Hikita}. 
In light of Theorem~\ref{thm:torusknot} and the work of Gorsky--Mellit \cite{GorskyMellit} it would be very interesting to search for a super version of $L_{\frac{m}{n}}$ whose sign-isotypic component recovers the Khovanov--Rozansky homology of $T(n,m)$.

\section*{Computational tools and AI usage}

Computations were performed in SageMath and Macaulay2, utilizing \cite{Zabrocki-code, Bergeron-code}. 
AI tools were used for coding in support of the proof of Lemma~\ref{lem:ideal-membership}, and proofreading.
The mathematics and writing of the paper is human-generated, and the authors take full responsibility for the results and content of the paper.

\bibliographystyle{amsplain}
\bibliography{biblio}
\end{document}